\documentclass[12pt]{amsart}
\usepackage{amsthm}
\usepackage{amscd}
\usepackage{url}
\usepackage{graphicx}
\usepackage{tikz-cd}
\usepackage{mathtools}
\usepackage{color}
\usepackage{amsmath}
\usepackage{amssymb}
\usepackage{amsthm}
\usepackage{amscd}
\usepackage{eucal} 
\usepackage{enumitem}
\usepackage{mathdots}
\usepackage{tikz}
\usepackage{hyperref}
\usepackage[all]{xy}

\allowdisplaybreaks

\newtheorem{theorem}[equation]{Theorem}

\newtheorem{lemma}[equation]{Lemma}

\newtheorem{proposition}[equation]{Proposition}
\newtheorem{corollary}[equation]{Corollary}

\newtheorem*{theorem:derhamisomorphism}{Theorem~\ref{T:derhamisomorphism}}
\newtheorem*{theorem:characterization}{Theorem~\ref{T:characterization}}

\theoremstyle{definition}
\newtheorem{definition}[equation]{Definition}

\newtheorem{remark}[equation]{Remark}

\numberwithin{equation}{section}

\newcommand{\F}{\mathbb{F}}

\newcommand{\Z}{\mathbb{Z}}
\newcommand{\QQ}{\mathbb{Q}}

\newcommand{\HH}{\mathbb{H}}
\newcommand{\TT}{\mathbb{T}}

\newcommand{\C}{\mathbb{C}}
\newcommand{\CC}{\mathbb{C}}

\newcommand{\bb}{\mathbf{b}}

\newcommand{\bm}{\mathbf{m}}
\newcommand{\bn}{\mathbf{n}}

\newcommand{\bw}{\mathbf{w}}
\newcommand{\bz}{\mathbf{z}}

\newcommand{\zz}{\mathbf{z}}

\newcommand{\ww}{\mathbf{w}}

\DeclareMathOperator{\Log}{Log}
\DeclareMathOperator{\Ker}{Ker}
\DeclareMathOperator{\GL}{GL}
\DeclareMathOperator{\Frac}{Frac}
\DeclareMathOperator{\Mat}{Mat}
\DeclareMathOperator{\Frob}{Frob}

\DeclareMathOperator{\Exp}{Exp}

\DeclareMathOperator{\dmat}{d}

\DeclareMathOperator{\Res}{Res}

\DeclareMathOperator{\Id}{Id}

\DeclareMathOperator{\Cof}{Cof}

\DeclareMathOperator{\Lie}{Lie}

\newcommand{\tr}{\mathrm{tr}}
\newcommand{\tpi}{\widetilde{\pi}}

\newcommand{\inv}{\ensuremath ^{-1}}
\newcommand{\twist}{^{(1)}}

\newcommand{\twistk}[1]{^{(#1)}}
\newcommand{\bomega}{\boldsymbol{\omega}}

\newcommand{\inorm}[1]{{\lvert #1 \rvert}}

\definecolor{ForestGreen}{rgb}{0.0, 0.5, 0.0}

\begin{document}

\title[Mellin Transform Formulas]{Mellin Transform Formulas for Anderson Modules and Hints of Modularity}

	\author{O\u{g}uz Gezm\.{i}\c{s}}
\address{Department of Mathematics, National Tsing Hua University, Hsinchu City 30042, Taiwan R.O.C.}
\email{gezmis@math.nthu.edu.tw}


\author{Nathan Green}

\address{Department of Mathematics and Statistics, University of North Carolina at Charlotte}
\email{ngreen50@charlotte.edu}

\date{\today}

\keywords{Drinfeld modules, $t$-motives, Mellin transform, Drinfeld modular forms, special values of $L$-functions}

	\subjclass[2010]{Primary 11G09, 11M38}

\maketitle
\begin{abstract} In the present paper, we introduce formulas for the logarithm function of abelian, uniformizable Anderson modules. Combining our formulas with a motivic map introduced recently by the second author, we relate special values of dual Goss $L$-functions of Drinfeld modules to the rigid analytic trivialization of their corresponding  $t$-comotives. We apply these formulas to relate certain $L$-values to special values of matrix valued modular forms, generalizing the vector valued Drinfeld modular forms of Pellarin and Pellarin and Perkins in the rank two setting. 
\end{abstract}


\section{Introduction}

\subsection{Background and Context}
The classical modularity theorem \cite{Wil95} establishes a deep connection between elliptic curves and modular forms. The proof of that theorem was a massive undertaking involving dozens of mathematicians and necessitating the development of novel techniques in a wide array of mathematics, including modular curves, Galois representations, Iwasawa theory, algebraic geometry and much more. However, nearly 50 years before this remarkable achievement, mathematicians began suspecting that such a connection should exist based on relatively straightforward analytic arguments. Namely, given a weight 2 Hecke eigenform $f(z)$ with integral Fourier series
\begin{equation}
f(z) = \sum_{n=1}^\infty a_n q^n,\quad q = e^{2\pi i z},
\end{equation}  
the $L$-function $L_f(z):=\sum_{n=1}^\infty a_n n^{-z}$ associated to $f$ is the $L$-function of an elliptic curve defined over $\mathbb{Q}$. Moreover, Shimura gave a concrete description of the elliptic curve (see \cite[\S 1.7]{DDT} and the references therein). Proceeding with slightly more analytic sophistication, we recall the definition of the Mellin transform for a complex function $f(x)$ (following as in \cite[\S 5.10]{DS}), with suitable decay conditions at infinity,
\begin{equation}\label{D:Mellin Def}
M(f)(s) = \int_0^\infty f(ix)x^{s-1}dx.
\end{equation}
We then observe that if $f(x)$ is a weight 2 eigenform as mentioned above, then we have
\begin{equation}\label{E:classical modularity}
M(f)(s) = (2\pi)^{-s}\Gamma(s)L_f(s),
\end{equation}
where $\Gamma$ is the traditional gamma function. Thus, the Mellin transform of a cusp form gives a Dirichlet series, and in particular the Mellin transform of a weight 2 newform gives the $L$-function of an elliptic curve.

We wish to study similar questions in the setting of characteristic $p$ valued function fields. In this setting, we have good analogues of each of the objects discussed above: We use Drinfeld modules in place of elliptic curves and Drinfeld modular forms in place of complex valued modular forms. We want to connect Drinfeld modular forms to the $L$-functions attached to Drinfeld modules (called Goss $L$-functions). However, almost immediately from the outset we run into problems. First, we do not have good methods for computing - nor even a good theoretical understanding - of the analogue of the Fourier expansion of Drinfeld modular forms (called a $u$-expansion, see Definition \ref{D:Modular forms def}). Thus the naive connection between Drinfeld modular forms and $L$-functions is missing. Further, due to the characteristic $p$ setting, we lack a meaningful integration theory to compute Mellin or Fourier transforms. Finally, some attempts have been made to connect Galois representations arising from Drinfeld modular forms with those arising from Drinfeld modules (see for example \cite{Gos02} and \cite{Boc}). The main obstruction to carrying out this program seems to be that the eigenspaces of Hecke algebras are always 1-dimensional, so that the local factors arising from them can never match with Drinfeld modules of rank $2$. However, recent results of the second author \cite{Gre24} and of both authors \cite{GG25} have given examples of an algebraic replacement for the Mellin transform which connects special values of $L$-functions and zeta functions with exponential series and rigid analytic trivialization.

In this paper we use the framework of the Mellin transform introduced in \cite{Gre24} to connect special values of certain Drinfeld modular forms  with special values of  Goss $L$-functions. In order to make this connection, we needed to develop new analytic techniques, as compared to \cite{GG25}, which allow us to state the theorems of this paper in much greater generality compared to \cite{GG25}. In particular, the results presented here have sufficient generality so that we can apply them to Drinfeld modules of arbitrary rank which generate vectorial Drinfeld modular forms --- a result which was impossible using the techniques of \cite{GG25}. Our formulas then connect values of these Drinfeld modular forms to values of Goss $L$-functions. We view these theorems as giving tantalizing hints about a deeper modularity connection between Drinfeld modular forms and Goss $L$-functions. We wish to be clear that these theorems are not themselves a modularity theorem --- since they only provide a connection between specified \textit{values} of Drinfeld modular forms and \textit{values} of $L$-functions, rather than establishing a link between the functions themselves. However, this is a very interesting phenomenon and it points in several promising new directions for research, which we will discuss later on in the paper.

\subsection{Precise Description of Results}\label{S:intro1.2}
We begin by introducing some notation and briefly describing the main objects of the present paper. Let $\F_q$ be the finite field with $q:=p^s$ elements ($p$ a prime). Let $A := \F_q[\theta]$ and $K := \F_q(\theta)$. We set $|\cdot|$ to be the non-archimedean norm corresponding to the infinite place normalized so that $|\theta|:=q$. Let $K_\infty := \F_q((1/\theta))$ be the completion of $K$ with respect to $|\cdot|$, and let $\C_\infty$ be the completion of a fixed algebraic closure of $K_\infty$. Let $A\subseteq L \subseteq \mathbb{C}_{\infty}$ be an $A$-algebra. Let $i\in \mathbb{Z}$ and $d,\ell$ be positive integers. For any $B=(b_{\mu \nu})_{\mu \nu}\in \Mat_{d\times \ell}(L)$, we define $B^{(i)}:=(b_{\mu \nu}^{q^{i}})_{\mu \nu}\in \Mat_{d\times \ell}(L)$. Moreover, we define the non-commutative power series ring $\Mat_{d}(L[[\tau]])$ subject to the relation
\[
\tau B=B^{(1)} \tau, \ \ B\in \Mat_{d}(L).
\]
Furthermore, we let $\Mat_d(L[\tau])\subset \Mat_{d}(L[[\tau]])$ be the subring of polynomials in $\tau$ with coefficients in $\Mat_d(L)$. Letting  $t$ be a variable over $\mathbb{C}_{\infty}$, we also define \textit{the Tate algebra $\mathbb{T}$} by 
\[
\mathbb{T}:=\left\{\sum_{i\geq 0}a_it^i\in \mathbb{C}_{\infty}[[t]] \ \ | \ \ |a_i|\to 0 \text{ as } i\to \infty \right\}.
\]
We equip $\mathbb{T}$ with the non-archimedean norm $||\cdot ||$ defined for any $g=\sum_{i\geq 0}a_it^i\in \mathbb{T}$ by 
\[
||g||:=\max\{|a_i| \ \ | \ \ i\in \mathbb{Z}_{\geq 0}\}.
\]
Note that $(\mathbb{T}, ||\cdot ||)$ forms a Banach space. For any $j\in \mathbb{Z}$ and $g=\sum_{i\geq 0}a_it^i\in \mathbb{T}$, we also define 
\[
g^{(j)}:=\sum_{i\geq 0}a_i^{q^j}t^i\in \mathbb{T}.
\]
Furthermore, for any $G=(g_{\mu \nu})_{\mu \nu}\in \Mat_{d\times \ell}(\mathbb{T})$, by a slight abuse of notation, we define
$
||G||:=\max\{||g_{\mu \nu}|| \ \ | \ \ 1\leq \mu, \nu \leq r\}
$. When $G\in \Mat_{d\times \ell}(\mathbb{C}_{\infty})$, we continue to use the earlier notation and let $|G|:=||G||$. Note that $(\Mat_{d\times \ell}(\mathbb{T}), ||\cdot ||)$ forms a Banach space. We further set 
\[
G^{(j)}:=(g_{\mu \nu}^{(j)})_{\mu \nu}\in \Mat_{d\times \ell}(\mathbb{T}).
\]

In \cite{And86}, Anderson defined (abelian) $t$-motives that serve as a function field analogue of motives in the sense of Grothendieck. These are the objects that are free and finitely generated over $\mathbb{C}_{\infty}[\tau]$ and $\mathbb{C}_{\infty}[t]$. We call its rank over $\mathbb{C}_{\infty}[\tau]$ \textit{the dimension of $M$} and its rank over $\mathbb{C}_{\infty}[t]$ \textit{the rank of $M$} (see \S\ref{S:tmot} for more details on $t$-motives). Anderson further showed that there is an anti-equivalence between the category of $t$-motives and the category of \textit{abelian Anderson modules}. By an abelian Anderson module, we mean an $\mathbb{F}_q$-algebra homomorphism $\phi:A\to \Mat_d(\mathbb{C}_{\infty}[\tau])$ which is given by 
       \[
       \phi_{\theta}:=\dmat_{\phi}[\theta]+B_1\tau+\cdots+B_{s}\tau^s
       \]
       for some $s\geq 1$ such that $d_{\phi}[\theta]:=\theta \Id_d+N$ for some nilpotent matrix $N\in \Mat_d(\mathbb{C}_{\infty})$. 
For each $\phi$, there exists a unique infinite series $\Exp_{\phi}:=\sum_{i\geq 0}\alpha_i\tau^i\in \Mat_d(\mathbb{C}_{\infty}[[\tau]])$, which we call \textit{the exponential series of $\phi$}, such that $\alpha_0:=\Id_d$. It induces an everywhere convergent $\mathbb{F}_q$-linear function $\Exp_{\phi}:\mathbb{C}_{\infty}^d\to \mathbb{C}_{\infty}^d$ and we call each non-zero element lying in the kernel $\Ker(\Exp_{\phi})$ \textit{a period of $\phi$}. We call $\phi$  \textit{uniformizable} if $\Exp_{\phi}$ is surjective. We further define \textit{the logarithm series} $\Log_{\phi}$ to be the formal inverse of $\Exp_{\phi}$ in $\Mat_d(\mathbb{C}_{\infty}[[\tau]])$. There exists a subdomain $\mathcal{D}\subset \mathbb{C}_{\infty}^d$ such that the logarithm series induces an $\mathbb{F}_q$-linear function $\Log_{\phi}:\mathcal{D}\to \mathbb{C}_{\infty}^d$ and this function will have a crucial importance in our work. One can also extend $\Log_{\phi}$ to $\Log_{\phi}:\mathcal{D}'\to \mathbb{T}^d$ where $\mathcal{D}'$ is a subset of $\mathbb{T}^d$ containing $\mathcal{D}$ (see \eqref{E:extension}). For more details on these aforementioned objects, we refer the reader to \S\ref{S:Andersondmod}.

Let $M$ be a $t$-motive of rank $r$ and dimension $d$. We let $\bm\in \Mat_{r\times 1}(M)$ be a $\mathbb{C}_{\infty}[t]$-basis for $M$ and consider the matrix $\Phi\in \Mat_{r}(\mathbb{C}_{\infty}[t])$ so that $\Phi^{\tr}$ represents the $\tau$-action on $\bm$. On the other hand, we let $\bb\in \Mat_{d\times 1}(M)$ be a $\mathbb{C}_{\infty}[\tau]$-basis for $M$ and consider the matrix $\mathfrak{M}\in \Mat_d(\mathbb{C}_{\infty}[\tau])$ that represents the $t$-action on $\bb$. We define the abelian Anderson module $\phi$ corresponding to $M$  by $\phi_{\theta}:=\mathfrak{M}$ and further assume that it is uniformizable. In this case, by \cite[Thm. 4]{And86}, we know that $\Ker(\Exp_{\phi})$ is a free $A$-module of rank $r$, and we fix an $A$-module basis  $\{\lambda_1,\dots,\lambda_r\}$ for $\Ker(\Exp_{\phi})$.  Using these aforementioned bases, in \eqref{E:isomtmot}, we construct a $\mathbb{C}_{\infty}[t,\tau]$-module isomorphism between $M\cong \Mat_{1\times d}(\mathbb{C}_{\infty}[\tau])$ and $\Mat_{1\times r}(\mathbb{C}_{\infty}[t])$. We further consider the matrix $\Psi\in \GL_r(\mathbb{T})$ satisfying
\[
\Psi^{(-1)}=\Phi \Psi.
\]
We note that $\Psi$ is indeed \textit{a rigid analytic trivialization of the $t$-comotive (also known as the dual $t$-motive in the literature) of $\phi$}. We refer the reader to \S\ref{S:uniAndmod} and \S\ref{S:dualmot} for further details.

For each $1\leq \nu \leq r$, we define the Anderson generating function of $\phi$ with respect to $\lambda_{\nu}$ by
\[
\mathcal{Y}_{\lambda_v}(t):=\sum_{i=0}^{\infty}\Exp_{\phi}\left(\dmat_{\phi}[\theta]^{-i-1}\lambda_{\nu}\right)t^i\in \mathbb{T}^d
\] and refer the reader to \S\ref{S:Andgen} for further details on these objects.
In \S\ref{S:varphimap}, using Anderson generating functions, we construct a certain subset $\mathbb{M}_{\phi}$ of $\Mat_{1\times d}(\mathbb{C}_{\infty}[[\tau]])$ which is equipped with a normed $\mathbb{C}_{\infty}$-vector space structure and contains $M$ as a dense subset.  Furthermore, we define an injective  continuous map $\varphi_{\phi}:\mathbb{M}_{\phi} \to \Mat_{1\times r}(\mathbb{T})$ that extends the isomorphism defined in \eqref{E:isomtmot} between $M$ and $\Mat_{1\times r}(\mathbb{C}_{\infty}[t])$ (Theorem \ref{T:tenwelinj}). Additionally, following the work of the second author \cite{Gre24}, for any $\zz\in \mathbb{C}_{\infty}^d$, we consider the map $\delta_{1,\bz}^{M}:\mathbb{M}_{\phi}\to \mathbb{C}_{\infty}^d$ which recovers the structure of the abelian Anderson module $\phi$ (see \S\ref{S:varphimap} for a precise definition). Lastly, we define \begin{equation}\label{E:mzmap}
\mathcal{M}_{\zz}:=\delta_{1,\zz}^{M}\circ \varphi_{\phi}^{-1}.
\end{equation}

We view $\mathcal{M}_{\zz}$ as an algebraic version of the Mellin transform in our framework. We give a short survey on the work related to this Mellin transform and explain our viewpoint. In \cite{Gre24}, the second author proved a formula relating the Mellin transform of the exponential function of the $n$th tensor power of the Carlitz module $C^{\otimes n}$ (see \S \ref{S:Carlitztensor}) to \textit{the $n$-th Carlitz zeta value $\zeta_{A}(n)$} given by 
\[
\zeta_{A}(n):=\sum_{\substack{a\in A\\a\text{ is monic}}}\frac{1}{a^n}.
\]
Specifically, he showed in \cite[Cor. 5.11]{Gre24} that for $u = \tpi^n/(\theta-t)$ and for specified $\zz \in \C_\infty^n$ we have
\[\mathcal{M}_{\zz}\left (\frac{u}{\mathfrak{p}_{n}(\Exp_{C^{\otimes n}}(u))}\right ) = \Gamma_A(n)\zeta_A(n),\]
where $\Gamma_A$ and $\zeta_A$ are function field versions of the Gamma and the zeta function, respectively and $\mathfrak{p}_{n}$ is the projection onto the $n$th coordinate. This formula should be compared with the classical formula using the complex valued Mellin transform described above which gives the completed Riemann zeta function:
\[M\left(\frac{1}{e^x-1}\right )(s) = \Gamma(s)\zeta(s).
\]

In \cite{GG25}, the authors extended the techniques and themes of \cite{Gre24} to apply to a wider class of Anderson modules, rather than just tensor powers of the Carlitz module. They proved formulas relating the Mellin transform of rigid analytic trivializations (see \S\ref{S:Drinfeld}) to special values of Goss $L$-functions for tensor powers of certain Drinfeld modules tensored with powers of the Carlitz module. In fact, the theorems of \cite{GG25} can be seen as a special, restricted case of Theorem \ref{T:Intro log formula} presented here. In order for the analytic techniques of \cite{GG25} to work, the authors required strong conditions on the Drinfeld modules studied. The present paper should be considered as a natural continuation of the arc of these ideas. In particular, we develop new analytic techniques (explained in Remark \ref{R:Analytic techniques}) which allow us to apply these techniques for a very large class of Anderson modules. We simply require it to be abelian and uniformizable. This generality allows us to give the main applications of this paper: The formula relating values of Drinfeld modular forms with values of dual Goss $L$-functions (Theorem \ref{T:Intromodform}).

We now proceed to describe the precise results of this paper. By Lemma \ref{L:integerm}, there exists a unique smallest non-negative integer $\mathfrak{m}_{\phi}$ such that for each $\nu$ and $i\geq 1$, the element 
$
\theta^{-\mathfrak{m}_{\phi}}\Exp_{\phi}\left(\dmat_{\phi}[\theta]^{-i}\lambda_{\nu}\right)\in \mathbb{C}_{\infty}^d$
lies in the domain of convergence of $\Log_{\phi}$. This implies that for each $\nu$, the element  $\theta^{-\mathfrak{m}_{\phi}}\mathcal{Y}_{\lambda_{\nu}}(t)\in \mathbb{T}^d$ lies in the extended domain $\mathcal{D}'$ of the logarithm function $\Log_{\phi}$ of $\phi$. We write
\[
\Log_{\phi}=\sum_{n=0}^{\infty}L_n\tau^n\in \Mat_d(\mathbb{C}_{\infty}[[\tau]]),\ \ \ \ L_n=(L_{ij}^{[n]})_{i,j}\in \Mat_{d}(\mathbb{C}_{\infty}).
\]
Next, for $1\leq \ell \leq d$, we set 
\begin{equation}\label{D:mathcal P}
\mathcal{P}_{\ell}(\mathcal{Z}):=\sum_{n\geq 0}\frac{1}{\theta^{q^n\mathfrak{m}_{\phi}}}[L^{[n]}_{\ell 1},\dots,L^{[n]}_{\ell d}]\begin{pmatrix}
    \mathcal{Z}_1^{(n)}\\
    \vdots\\
    \mathcal{Z}_d^{(n)}
\end{pmatrix},  \ \ \mathcal{Z}=(\mathcal{Z}_1,\dots,\mathcal{Z}_d)^{\tr}\in \mathbb{T}^d.
\end{equation}
By our condition on $\mathfrak{m}_{\phi}$, note that each $\mathcal{P}_{\ell}(\mathcal{Y}_{\nu}(t))$ converges in $\mathbb{T}$.

The first main result of the present paper, generalizing \cite[Thm. 1.1, Thm. 1.6]{GG25} to abelian and uniformizable Anderson modules, can be stated as follows.
\begin{theorem}\label{T:Intro log formula} Let $\mathfrak{p}_{\ell}:\mathbb{C}_{\infty}^{d}\to \mathbb{C}_{\infty}$ be the projection onto the $\ell$-th coordinate. For each $1\leq \nu \leq r$, we also let $\lambda_{\nu}=(\lambda_{\nu 1},\dots,\lambda_{\nu d})\in \mathbb{C}_{\infty}^{d}$. Then, for any $\zz\in \mathbb{C}_{\infty}^d$ lying in the domain of convergence of $\Log_{\phi}\theta^{-\mathfrak{m}_{\phi}}$, we have
\begin{equation}\label{E:formula}
\mathfrak{p}_{\ell}(\Log_{\phi}(\theta^{-\mathfrak{m}_{\phi}}\zz))=
\mathcal{M}_{\zz}\left([\mathcal{P}_{\ell}(\mathcal{Y}_1(t)),\dots, \mathcal{P}_{\ell}(\mathcal{Y}_r(t))](\Psi^{\tr})^{(-1)}\right).
\end{equation}
Moreover, let $1\leq \mathfrak{t}\leq d$ be a tractable coordinate of $\phi$ (see \S \ref{S:Andersondmod} for the definition of a tractable coordinate of $\phi$). If $\mathfrak{m}_{\phi}=0$ and for $i\geq 1$, each $\dmat_{\phi}[\theta]^{-i}\lambda_{\nu}$ lies in the domain of convergence of $\Log_{\phi}$, then we have 
\[
\mathfrak{p}_{\mathfrak{t}}(\Log_{\phi}(\zz))=\mathcal{M}_{\zz}\left(\frac{1}{\theta-t}[\lambda_{1 \mathfrak{t}},\dots, \lambda_{r \mathfrak{t}}](\Psi^{\tr})^{(-1)}\right).
\]

\end{theorem}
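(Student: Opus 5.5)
Since $\mathcal{M}_{\zz}=\delta^{M}_{1,\zz}\circ\varphi_\phi^{-1}$ and $\varphi_\phi$ is injective (Theorem \ref{T:tenwelinj}), the plan is to identify an explicit element $\mathfrak{L}_\ell\in\mathbb{M}_\phi$ whose image under $\varphi_\phi$ is the row vector $[\mathcal{P}_{\ell}(\mathcal{Y}_1(t)),\dots,\mathcal{P}_{\ell}(\mathcal{Y}_r(t))](\Psi^{\tr})^{(-1)}$, and then evaluate $\delta^M_{1,\zz}$ on it. The natural candidate is the $\ell$-th row of the logarithm series composed with $\theta^{-\mathfrak{m}_\phi}$, namely
\[
\mathfrak{L}_\ell:=\sum_{n\geq 0}\theta^{-q^n\mathfrak{m}_\phi}[L^{[n]}_{\ell 1},\dots,L^{[n]}_{\ell d}]\tau^n\in \Mat_{1\times d}(\mathbb{C}_\infty[[\tau]]),
\]
viewed via the identification $M\cong\Mat_{1\times d}(\mathbb{C}_\infty[\tau])$. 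By construction of $\delta^M_{1,\zz}$ (which on polynomials in $\tau$ evaluates a row vector $\sum c_n\tau^n$ at $\zz$ in a way that recovers the action of $\phi$), one expects, after checking it extends continuously to $\mathbb{M}_\phi$, that $\delta^M_{1,\zz}(\mathfrak{L}_\ell)=\mathfrak{p}_\ell(\Log_\phi(\theta^{-\mathfrak{m}_\phi}\zz))$ whenever $\theta^{-\mathfrak{m}_\phi}\zz$ lies in the convergence domain. So the first step is to verify $\mathfrak{L}_\ell\in\mathbb{M}_\phi$ and this evaluation identity, using truncations $\mathfrak{L}_\ell^{\leq N}\in M$ and continuity of $\delta^M_{1,\zz}$.

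The second step is to compute $\varphi_\phi(\mathfrak{L}_\ell)$. On $M$, the isomorphism \eqref{E:isomtmot} is described through the Anderson generating functions: for a row vector $m=\sum c_n\tau^n$, its image should be expressible as $[\,\sum_n c_n\mathcal{Y}_1^{(n)},\dots,\sum_n c_n\mathcal{Y}_r^{(n)}](\Psi^{\tr})^{(-1)}$ up to the normalization used in \S\ref{S:varphimap}; this is the standard relation between $\Psi$ and the matrix of Anderson generating functions (and their twists) for the $t$-comotive. Applying this to the truncations $\mathfrak{L}^{\leq N}_\ell$ gives exactly the partial sums of $[\mathcal{P}_\ell(\mathcal{Y}_1),\dots,\mathcal{P}_\ell(\mathcal{Y}_r)](\Psi^{\tr})^{(-1)}$, and passing to the limit uses continuity of $\varphi_\phi$ together with the convergence of $\mathcal{P}_\ell(\mathcal{Y}_\nu)$ in $\mathbb{T}$ guaranteed by Lemma \ref{L:integerm}. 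I expect this convergence/limit interchange — showing the truncations converge in the norm of $\mathbb{M}_\phi$, not just coefficientwise — to be the main obstacle; I would bound $\|\mathcal{Y}_\nu^{(n)}\|$ via the coefficients $\Exp_\phi(\dmat_\phi[\theta]^{-i}\lambda_\nu)$ and use the defining property of $\mathfrak{m}_\phi$ to get uniform decay.

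For the second assertion, with $\mathfrak{m}_\phi=0$ I would compute $\mathcal{P}_{\mathfrak{t}}(\mathcal{Y}_\nu)$ termwise in $t$: its $t^i$ coefficient is $\mathfrak{p}_{\mathfrak{t}}\Log_\phi(\Exp_\phi(\dmat_\phi[\theta]^{-i-1}\lambda_\nu))$. Under the hypothesis that $\dmat_\phi[\theta]^{-i-1}\lambda_\nu$ lies in the domain of $\Log_\phi$, this equals $\mathfrak{p}_{\mathfrak{t}}(\dmat_\phi[\theta]^{-i-1}\lambda_\nu)$; at a tractable coordinate the nilpotent part does not contribute, so this is $\theta^{-i-1}\lambda_{\nu\mathfrak{t}}$. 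Summing gives $\lambda_{\nu\mathfrak{t}}\sum_i\theta^{-i-1}t^i=\lambda_{\nu\mathfrak{t}}/(\theta-t)$, and substituting into \eqref{E:formula} yields the stated formula.
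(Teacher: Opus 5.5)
Your proposal is correct and follows essentially the paper's route. The identity you call standard, $\varphi_\phi(\mathcal{P})=[\mathcal{P}(\mathcal{Y}_1(t)),\dots,\mathcal{P}(\mathcal{Y}_r(t))](\Psi^{\tr})^{(-1)}$, is exactly Proposition \ref{P:eqtngen} (proved from \eqref{E:0015} and $((V^{\tr})^{(-1)}\Upsilon)^{(i)}=\mathfrak{S}_{i-1}(V^{\tr})^{(-1)}\Upsilon$). The paper applies it directly to the full series $\mathfrak{L}_\ell\in\mathbb{M}_\phi$, so the truncation/limit step you flag as the main obstacle is not needed, and your tractable-coordinate computation matches the paper's.

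One small correction: $\delta^M_{1,\zz}(\mathfrak{L}_\ell)=\mathfrak{p}_\ell(\Log_\phi(\theta^{-\mathfrak{m}_\phi}\zz))$ holds directly by the definition of $\delta^M_{1,\zz}$ on $M_\zz$ as a limit over truncations. Continuity of $\delta^M_{1,\zz}$ for $|\cdot|_\phi$ on $\mathbb{M}_\phi$ is not needed, and it fails in general (e.g.\ when $|z_\ell|>\max_\nu\|\mathcal{Y}_{\nu\ell}\|$).
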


\begin{remark}\label{R:Analytic techniques} In \cite[Thm. 1.1, Thm. 1.6]{GG25}, the authors proved Theorem \ref{T:Intro log formula} for a certain class of Drinfeld modules and for tensor products of such Drinfeld modules with Carlitz tensor powers, which are also examples of abelian and uniformizable Anderson modules. There, we obtained our results via a delicate analysis which is mainly based on the coefficients of Drinfeld modules. In particular, we required the coefficients of the Drinfeld module to have norm less than or equal to 1. We also emphasize that the integer $\mathfrak{m}_{\phi}$ corresponding to Anderson modules studied in \cite{GG25} is zero. In the present paper, we instead focus on an analysis of Anderson generating functions. This approach not only provides us a more general and complete picture but also allows us to avoid complicated computations heavily depending on the norm of the coefficients of Anderson modules.
\end{remark}

\subsection{Special values of $L$-functions}\label{S:Intro1.3}
Our next result concerns the special values of  $L$-functions of Drinfeld modules. First, we briefly describe dual Goss $L$-functions attached to Drinfeld modules introduced by Goss \cite{Gos92} whose constructions are inspired by the ideas of Gekeler \cite[Rem. 5.10]{Gek91}. Let $\phi$ be a Drinfeld module defined over a finite extension of $K$ (see \S\ref{S:Drinfeld} for a discussion on Drinfeld modules). Let $\mathfrak{v}\in A$ be a monic irreducible polynomial. We set $K_{\mathfrak{v}}$ to be the completion of $K$ at the place corresponding to $\mathfrak{v}$. Let $(\rho_{\mathfrak{v}})$ be a family of continuous representations of the Galois group of $K^\text{sep}/K$ 
so that the characteristic polynomial 
\[P_v(X) := \det(1-X\cdot \rho_{\mathfrak{v}}(\Frob_v))\]
of the Frobenius map at a place $v\neq {\mathfrak{v}}$ of $K$ acting on the $\mathfrak{v}$-adic Tate module of $\phi$ is independent of the choice of prime $\mathfrak{v}$. Moreover, it has coefficients in $A$ (along with a ramification condition - see \cite[\S8.10]{Gos96} for full details). We also let $P_v(X)=(1-a_1X)\cdots (1-a_rX)$ for some $a_1,\dots,a_r$ in a fixed algebraic closure of $K$ in $\mathbb{C}_{\infty}$ and set 
\[
P_v^{\vee}(X):=(1-a_1^{-1}X)\cdots (1-a_r^{-1}X).
\]
Then \textit{the  dual Goss $L$-function of $\phi$} is given by
\begin{equation}\label{E:L func Euler expansion}
L(\phi^{\vee},n) := \prod_{v} P_v^{\vee}(v^{-n})\inv.
\end{equation}
Here, the product runs over all the finite places of $A$. By \cite[Cor. 3.6]{ChangEl-GuindyPapanikolas}, we know that $L(\phi^{\vee},n)$ converges in $K_{\infty}$ for all $n\in \mathbb{Z}_{\geq 0}$. We also note that if $\phi$ is \textit{the Carlitz module} given by $C_{\theta}:=\theta+\tau$, then  for any positive integer $n$, we have $L(C^{\vee},n-1)=\zeta_{A}(n)$.

We will now describe our next result. We first consider the Drinfeld upper half plane
\[
\mathbb{H}^r=\mathbb{P}^{r-1}(\CC_{\infty})\setminus \{K_{\infty}\text{-rational hyperplanes}\}.
\]
We identify any of its elements as $\ww=(w_1,\dots,w_r)^{\tr}\in \CC_{\infty}^{r}$ whose entries are $K_{\infty}$-linearly independent and normalized so that $w_r=1$. For any $\gamma=(a_{ij})\in \GL_r(K_\infty)$, we define the action of $\GL_r(K_{\infty})$ on $\mathbb{H}^r$ by
\[
\gamma \cdot \ww:=\Big(\frac{a_{11}w_1+\dots+ a_{1r}w_r}{a_{r1}w_1+\dots+ a_{rr}w_r},\dots,\frac{a_{(r-1)1}w_1+\dots +a_{(r-1)r}w_r}{a_{r1}w_1+\dots+ a_{rr}w_r},1\Big)^{\tr}\in \mathbb{H}^r.
\]
By Drinfeld \cite{Dri74}, we know that the moduli space of isomorphism classes of Drinfeld modules of rank $r$ defined over $\mathbb{C}_{\infty}$ is the quotient $\GL_r(A)\setminus \HH^r$. Thus, up to passing to an isomorphic Drinfeld module if necessary, we assume that the period lattice $\Lambda_{\ww}:=Aw_1+\cdots+Aw_r$  generated by the entries of $\ww$, under the one-to-one correspondence described by Drinfeld \cite[\S5]{Dri74}, produces a Drinfeld module $\phi(\ww)$ defined by 
\begin{equation}\label{E:Drinfeldz}
    (\phi(\ww))_{\theta} := \theta + g_1(\ww)\tau + \cdots+g_r(\ww)\tau^r
\end{equation}
so that $g_1(\ww),\dots,g_r(\ww)$ are in $A$. For each $1\leq i \leq r$, we further let $f_i(\ww,t)\in \mathbb{T}$ be the  Anderson generating function of $\phi(\ww)$ with respect to $\ww_i$.

For any $\mathfrak{p}\in A$, we let $\mathfrak{p}(t):=p_{|\theta=t}\in \mathbb{F}_q[t]$ and 
for $\gamma = (a_{ij})_{i,j} \in \GL_r(A)$, define
\[\overline \gamma: = (a_{ij}(t))_{i,j} \in \GL_r(\mathbb{F}_q[t]).\]
We further set
\[
j(\gamma,\ww):=a_{r1}w_1+\dots+ a_{rr}w_r\in \CC_{\infty}^{\times}.
\]
The map $\gamma \mapsto \overline \gamma$ can be viewed as an example of a representation of $\GL_r(A)$ on a Banach space described in \cite[(1.3)]{Pel25}. We call this \textit{the identity representation}.
Inspired by the work of Pellarin \cite{Pel12} and Pellarin and Perkins \cite{PP18}, we further define another representation $\rho^{*}$ of $\GL_r(A)$ given by the map $\gamma \to (\overline \gamma^{\tr})^{-1}$. 

A rigid analytic matrix-valued function $f:\mathbb{H}^r \to \Mat_{r}(\TT)$, equipped with a certain growth condition (see \eqref{D:matrix modular type}), is \textit{a modular matrix-valued function with weights $(\ell_1,\dots,\ell_r) \in \mathbb{Z}^r$ and type $m\in \Z/(q-1)\Z$ with respect to $\rho^{*}$}
 if for all $\gamma\in \GL_r(A)$, we have
\[f\left(\gamma \cdot \ww\right) =\det(\gamma)^{-m}\rho^{*}(\gamma) f(\ww) \begin{pmatrix}
    j(\gamma,\ww)^{\ell_1}& & & \\
     & \ddots & &  \\
      & & & j(\gamma,\ww)^{\ell_r}
      \end{pmatrix}.\]
If a function $f:\mathbb{H}^r \to \Mat_{r}(\TT)$ is such that $f\twistk{k}$ is modular  matrix-valued function for $\rho^{*}$ where $k\in \Z_+$ with $k$ minimal with weights $(q^k\ell_1,\dots,q^k\ell_r)$ and type $m$, then we say that $f$ is \textit{a modular  matrix-valued function for $\rho^{*}$ with weights $(\ell_1,\dots,\ell_r)\in \QQ^r$, type $m$ and root $q^k$}. These rigid analytic functions should be compared to classical vector-valued modular forms.

Our second main result concerns a certain relation between modular matrix valued functions and our Mellin transform, which will be restated as Proposition \ref{P:modlike} and Corollary \ref{C:modlike} later. 
\begin{theorem}\label{T:Intromodform}
Let $\ww=(w_1,\dots,w_r)^{\tr}\in \mathbb{H}^r$ and   $\phi(\ww)$ be as in \eqref{E:Drinfeldz}. Let $\Psi(\ww)\in \GL_r(\TT)$ be the rigid analytic trivialization associated to $\phi(\ww)$ as in \eqref{E:ratmod}.
\begin{itemize}
\item[(i)] The function $\ww\to (\Psi(\ww)^{\tr})^{(-1)}$ is a modular matrix-valued function of weights $(1/q,\dots,1/q^{r-1})$,  type $0$ and root $q^r$ with respect to $\rho^{*}$. 
\item[(ii)] The Mellin transform of $\Psi(\ww)$, up to a constant $\mathfrak{c}_{\phi(\ww)}\in K^{\times}$, equals the dual Goss $L$-function associated to $\phi(\ww)$ evaluated at $0$. More precisely, there exist an element $\mathfrak{f}_{\phi(\ww)}\in K_{\infty}^{\times}$ and a non-negative integer $N(\phi(\ww))$ so that if we let $\bomega:= \theta^{\mathfrak{m}_{\phi(\ww)}}\Exp_{\phi(\ww)}(\mathfrak{f}_{\phi(\ww)}/\theta^{N(\phi(\ww))})$ and set 
\[
\mathcal{I}(\phi(\ww)):=\left[\Log_{\phi(\ww)}\left(\frac{f_{1}(\ww,t)}{\theta^{\mathfrak{m}_{\phi(\ww)}}}\right),\dots,\Log_{\phi(\ww)}\left(\frac{f_{r}(\ww,t)}{\theta^{\mathfrak{m}_{\phi(\ww)}}}\right)\right]\in \Mat_{1\times r}(\mathbb{C}_{\infty}),
\]
then we have
\begin{equation}\label{E:ourformula}
L(\phi(\ww)^{\vee},0)=\mathfrak{c}_{\phi(\ww)}\mathcal{M}_{\bomega}\left(\mathcal{I}(\phi(\ww))
(\Psi(\ww)^{\tr})^{(-1)}\right).
\end{equation}
\item[(iii)] Let $\phi(\ww)$ be the Drinfeld module of rank two as in \eqref{E:Drinfeldz} so that for $i=1,2$, we have $\deg_{\theta}g_i(\ww)<q^{i}$. Then $\bomega=1$ and 
\[
L(\phi(\ww)^{\vee},0)=\mathfrak{c}_{\phi(\ww)}\mathcal{M}_{\bomega}\left(\frac{1}{\theta-t}
\left[w_1,w_2\right]
(\Psi(\ww)^{\tr})^{(-1)}\right).
\]
\end{itemize}

\end{theorem}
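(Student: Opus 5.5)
We will treat the three parts separately, with (ii) and (iii) reduced to Theorem \ref{T:Intro log formula} combined with a class-number-type formula for $L(\phi^\vee,0)$.

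\textbf{Part (i).} We would use the explicit shape of $\Psi(\ww)$ from \eqref{E:ratmod}. For a Drinfeld module of rank $r$, $(\Psi^{\tr})^{(-1)}$ is, up to a fixed matrix in the $g_i(\ww)$ and their twists, the matrix whose columns are Frobenius twists $f_i(\ww,t)^{(k)}$, $0\le k\le r-1$, of the Anderson generating functions. Equivalently, its inverse is built from the $f_i^{(k)}$ after dividing by a Moore-type determinant. Now let $\gamma\in\GL_r(A)$. The lattice $\Lambda_{\gamma\cdot\ww}$ equals $j(\gamma,\ww)^{-1}\Lambda_\ww$, with the basis transformed by $\gamma$. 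Hence $\phi(\gamma\cdot\ww)=j^{-1}\phi(\ww)j$, the coefficients satisfy $g_i(\gamma\cdot\ww)=j^{q^i-1}g_i(\ww)$, and $f(\gamma\cdot \ww)=j^{-1}\overline\gamma\, f(\ww)$ as vectors of Anderson generating functions. The $\theta\mapsto t$ appears because $a(\theta)$ acting on periods becomes multiplication by $a(t)$ on generating functions. Twisting $k$ times multiplies by $j^{-q^k}$. Feeding this into the formula for $(\Psi^{\tr})^{(-1)}$, the transpose-inverse structure turns $\overline\gamma$ into $\rho^*(\gamma)=(\overline\gamma^{\tr})^{-1}$. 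The columns acquire powers $j^{q^{-k}}$ after untwisting, which is why the weights become $1/q^k$. To obtain integral weights we apply the twist $(r)$, which gives the root $q^r$. The determinant contributions cancel, giving type $0$. The growth condition \eqref{D:matrix modular type} should follow from standard bounds on $f_i$ at the cusps.

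\textbf{Part (ii).} We would invoke a Taelman-type class formula, as used in \cite{GG25}. For a Drinfeld module over $A$, $L(\phi^\vee,0)$ is a $K^\times$-multiple of the logarithm of a generator of the unit module. Concretely, there exist $\mathfrak f_{\phi}\in K_\infty^\times$ and $N(\phi)$ such that $\Log_\phi(\theta^{-\mathfrak m_\phi}\bomega)=\mathfrak f_\phi/\theta^{N}$, and this value equals $\mathfrak c_\phi^{-1}L(\phi^\vee,0)$. The power $\theta^{N}$ is chosen so that the argument lies in the convergence domain of $\Log_\phi\theta^{-\mathfrak m_\phi}$. We would then apply the first formula of Theorem \ref{T:Intro log formula} with $d=1$, $\ell=1$ and $\zz=\bomega$. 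In this case $\mathcal P_1(\mathcal Y_\nu)=\Log_\phi(\theta^{-\mathfrak m_\phi}f_\nu)$, which is exactly $\mathcal I(\phi(\ww))$, and \eqref{E:ourformula} follows. The main obstacle is the precise identification of the $L$-value with a logarithm of an explicit point $\bomega$. This requires the dual $L$-function version of Taelman's formula (Euler factors $P_v^\vee$, which match the dual $t$-motive) and the claim that the relevant regulator is the logarithm of $\Exp_\phi$ of a rational element. We would cite \cite{ChangEl-GuindyPapanikolas} and \cite{GG25} for this and take care to track the constant $\mathfrak c$.

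\textbf{Part (iii).} Under $\deg g_i<q^i$, the Newton polygon of $\Exp_\phi$ and $\Log_\phi$ shows that $\mathfrak m_\phi=0$, that $\Log_\phi$ converges on the needed points $\dmat[\theta]^{-i}\lambda_\nu$, and that the unit module is generated so that $\bomega=1$, with $\Log_\phi(1)$ convergent. We would then apply the second formula of Theorem \ref{T:Intro log formula} with tractable coordinate $\mathfrak t=1$, which replaces $\mathcal I$ by $\frac{1}{\theta-t}[w_1,w_2]$ and yields the stated identity.
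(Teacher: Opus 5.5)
\textbf{Where you match the paper.} Your part (ii) is the paper's argument. It uses Taelman's class formula, with the bad Euler factors accounting for the passage from $L(\phi/A)$ to $L(\phi^\vee,0)$ inside $K^\times$. It chooses $N(\phi)$ so that $\mathfrak f_\phi/\theta^{N(\phi)}=\Log_\phi(\Exp_\phi(\mathfrak f_\phi/\theta^{N(\phi)}))$. It then applies the first formula of Theorem \ref{T:Intro log formula} with $d=1$, where indeed $\mathcal P_1(\mathcal Y_\nu(t))=\Log_\phi(\theta^{-\mathfrak m_\phi}f_\nu)$. The ``main obstacle'' you flag there is not one: no rationality of $\Exp_\phi(\mathfrak f_\phi/\theta^{N})$ is needed, because $\bomega$ is simply defined as $\theta^{\mathfrak m_\phi}$ times it. In (i), your derivation from $\Lambda_{\gamma\cdot\ww}=j(\gamma,\ww)^{-1}\Lambda_\ww$ reproduces \eqref{E:Upsilon transform}. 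Combined with $g_i(\gamma\cdot\ww)=j(\gamma,\ww)^{q^i-1}g_i(\ww)$, it gives the modular-like law with weights $(1/q,\dots,1/q^r)$ and root $q^r$, as in Proposition \ref{P:modlike}.

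\textbf{The gap: the growth condition in (i).} The condition in Definition \ref{D:matrix modular type}(ii) is what separates ``modular'' from ``modular-like'', and it takes half of the paper's proof. It does not follow from ``standard bounds on $f_i$ at the cusps''. You must control an \emph{inverse}, $(\Psi(\ww)^{\tr})^{(r-1)}=\big((V(\ww)^{\tr})^{(r-1)}\Upsilon(\ww)^{(r)}\big)^{-1}$, and both factors degenerate at the cusp. The first column blows up, e.g.\ $u f_1^{(r-1)}\to\tpi^{-1}\widetilde{g}_{r-1}(\widetilde{\ww})^{-1}$. Also $g_r$ is a cusp form, so $V(\ww)^{-1}$ blows up as well. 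Separate estimates on $(\Upsilon^{(r)})^{-1}$ and $((V^{\tr})^{(r-1)})^{-1}$ therefore cannot show that rows $2,\dots,r$ stay bounded; the needed cancellation is only visible in the product. The paper handles this in four steps:
\begin{itemize}
\item[(a)] It uses \eqref{E:determinant}, $\det\Upsilon=\omega_C/h_r$, and the cuspidality of $h_r$ to get $\det\Psi(\ww)^{(r-1)}=O(u)$.
\item[(b)] It rewrites the entries $\mathfrak F_{ij}$ of the product via Lemma \ref{L:AGF}(iii), so that only twists $f_j^{(k)}$ with $k\le r-1$ and the combination $g_rf_j^{(r)}$ appear.
\item[(c)] It feeds in the cusp asymptotics of $uf_1^{(i)}$, the boundedness of $f_j$ for $j\ge 2$ (via $u$-expansions of the coefficients of $\Exp_{\phi(\ww)}$), and $g_rf_j^{(r)}=O(u^{q-1})$ for $j\ge2$.
\item[(d)] A cofactor computation then shows the first row of $(\Psi^{\tr})^{(r-1)}$ is $O(u)$ and the other rows are bounded.
\end{itemize}
None of (a)--(d) appears in your sketch, so as written you only establish modular-likeness.

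\textbf{The thinner justification in (iii).} Here ``the Newton polygon shows'' carries all the weight. The claim $\mathfrak m_\phi=0$ compares the $\theta$-torsion points $\Exp_\phi(\lambda_\nu/\theta)$ with the radius of convergence of $\Log_\phi$. That requires information about the period lattice, which the paper takes from \cite{KP23}: under $\deg_\theta g_i<q^i$ one may choose $\lambda_i=\theta\Log_\phi(\xi_i)$. The paper gets $\bomega=1$ from $L(\phi^\vee,0)=\log_\phi(1)$ \cite[Thm.~1.7.5]{Luc25}.

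You could weaken the second input. If $\log_\phi(1)$ converges, then $\log_\phi(1)\in U(\phi/A)$, so it is a nonzero $A$-multiple of $\mathfrak f_\phi$. Setting $\mathfrak f_\phi:=\log_\phi(1)$ and $N:=0$ then only changes $\mathfrak c_{\phi(\ww)}$ by an element of $K^\times$. The convergence claims and $\mathfrak m_\phi=0$ still have to be proved or cited.
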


We view the above theorem as giving a tantalizing hint about modularity for Drinfeld modular forms. As discussed in the previous section, the exact nature of the relationship between Drinfeld modular forms and $L$-functions of Drinfeld modules remains quite mysterious. The above theorem is the first example (to the authors' knowledge) of establishing a direct connection between a Drinfeld modular form and special values of Drinfeld $L$-functions. We again emphasize that the above theorem is not a true modularity theorem, because it relates \textit{values} of Drinfeld modular forms with \textit{values} of Goss $L$-functions. Nevertheless, our formula \eqref{E:ourformula} should be compared to the classical formula discussed in \eqref{E:classical modularity}.

Further, if one changes the value of $\ww$ in the above theorem, it changes the \textit{value} of the Drinfeld modular form on the right hand side, whereas it changes the Drinfeld module, and consequently, the \textit{Goss $L$-function itself}, on the left hand side. Ideally, one would wish for $\ww$ to appear in the argument of the $L$-function on the left hand side, rather than in the Drinfeld module. The exact nature of how this formula depends on choice of $\ww$ is subtle, and is a topic of future work.

\begin{remark} Using Theorem \ref{T:Intro log formula}, we can also obtain closed formulas for the special values of Goss $L$-functions at any positive integer $k$ in terms of our Mellin transform. This requires connecting the Anderson module $\phi\otimes C^{\otimes k}$ for any $k\geq 1$ to special values of $L$-functions of Drinfeld modules defined over $A$ and analysis on periods of $\phi\otimes C^{\otimes k}$. For the sake of reducing the complexity of the present paper, we reserve our upcoming paper \cite{GG26} to the aforementioned study of special values.
\end{remark}

The outline of the present paper can be described as follows. In \S2, we introduce Anderson modules as well as the objects attached to Anderson modules such as Anderson generating functions, $t$-motives and $t$-comotives. We also discuss the case of Drinfeld modules and the tensor powers of the Carlitz module in \S\ref{S:Drinfeld} and \S\ref{S:Carlitztensor}. In \S3, we provide a proof for Theorem \ref{T:Intro log formula}. In \S4, we discuss details of modular matrix valued functions for $\rho^{*}$ and prove Theorem \ref{T:Intromodform}.

\subsection*{Acknowledgments} The authors would like to thank Yen-Tsung Chen for his comments on an earlier version of the paper which later inspire us to generalize Theorem \ref{T:Intromodform} for Drinfeld modules defined over $A$. The first author acknowledges support by NSTC Grant 113-2115-M-007-001-MY3. The second author acknowledges support from the state of Louisiana Board of Regents and from the NSF under Grant No. 2302399.

\section{Preliminaries and Background}
In this section, we briefly describe the main tools of the present paper, namely, Anderson modules, Anderson generating functions, (abelian) $t$-motives as well as ($t$-finite) $t$-comotives. In the last subsection, we also provide several examples of these objects. Our exposition is mainly based on \cite{And86, BP20, HJ20, NP21} and hence we refer the reader to those references for further details.
\subsection{Anderson modules} \label{S:Andersondmod} For any $A$-algebra $L$, we start by recalling the non-commutative polynomial power series ring $ \Mat_{d}(L[[\tau]]) $ and the polynomial ring $ \Mat_{d}(L[\tau]) $ from \S\ref{S:intro1.2}. Each element $\mathcal{U}=\sum_{i\geq 0}B_i\tau^i\in \Mat_{d}(L[[\tau]])$ defines an action on $L^d:=\Mat_{d\times 1}(L)$ given by
\[
\mathcal{U}\cdot x=\sum_{i=0}^{\infty}B_ix^{(i)}, \ \ x\in L^d
\]
provided that the right hand side of the above equality converges in $L$.

\begin{definition}
   \begin{itemize}
       \item[(i)] \textit{An Anderson module of dimension $d$} (defined over $L$) is an $\mathbb{F}_q$-algebra homomorphism $\phi:A\to \Mat_d(L[\tau])$ which is given by 
       \[
       \phi_{\theta}=\dmat_{\phi}[\theta]+B_1\tau+\cdots+B_{s}\tau^s
       \]
       for some $s\geq 1$ and $d_{\phi}[\theta]=\theta \Id_d+N$ for some nilpotent matrix $N\in \Mat_d(L)$.
       \item[(ii)] Let $\psi$ be an Anderson module of dimension $d'$.  A morphism between $\phi$ and $\psi$ is given by a matrix $U\in \Mat_{d\times d'}(L[\tau])$ satisfying
       \[
       \phi_{\theta}U=U\psi_{\theta}.
       \]
       If such a $U$ exists, then we say that $\phi$ and $\psi$  are \textit{isogenous} (over $L$). Furthermore, if $U\in \GL_d(L[\tau])$, then  we say that $\phi$ and $\psi$ are \textit{isomorphic} (over $L$).
   \end{itemize} 
\end{definition}
We denote by $\Lie(\phi)(L)$ the $A$-module $L^d$ whose $A$-module structure is given by
\[
a\cdot x:=\dmat_{\phi}[a]x, \ \ x\in L^d.
\]For an Anderson module $\phi$ of dimension $d$ and $1\leq i \leq d$, we say that its $i$-th coordinate is \textit{tractable} if, for any $z\in \mathbb{C}_{\infty}^d$, the $i$-th coordinate of $a\cdot z\in \Lie(\phi)(\mathbb{C}_{\infty})$ is equal to $az$ for all $a\in A$.
Further, we denote by $\phi(L)$ the $A$-module $L^d$ whose $A$-module structure is given by
\[
a\cdot x:=\phi_a x, \ \ a\in A, \ \  \  x\in L^d.
\]
Let $\Frac(L)$ be the fraction field of $L$. There exists a unique infinite series $\Exp_{\phi}=\sum_{i\geq 0}\alpha_i\tau^i\in \Mat_d(\Frac(L)[[\tau]])$, which we call \textit{the exponential series of $\phi$}, such that $\alpha_0=\Id_d$ and it satisfies the following functional equation in $\Mat_d(\Frac(L)[[\tau]])$:
\[
\Exp_{\phi}\dmat_{\phi}[\theta]=\phi_{\theta}\Exp_{\phi}.
\]
It further induces an everywhere convergent $\mathbb{F}_q$-linear function $\Exp_{\phi}:\mathbb{C}_{\infty}^d\to \mathbb{C}_{\infty}^d$ given by
\[
\Exp_{\phi}(\zz)=\sum_{i\geq 0}\alpha_i\zz^{(i)},\ \ \zz\in \Lie(\phi)(\mathbb{C}_{\infty}).
\]

By \cite[\S2.2]{And86}, we know that $\Exp_{\phi}$ may not be always surjective. If $\Exp_{\phi}$ is surjective, then we say that $\phi$ is \textit{uniformizable}. We set $\Lambda_{\phi}:=\Ker(\Exp_{\phi})\subset \Lie(\phi)(\mathbb{C}_{\infty})$ and call $\Lambda_{\phi}$ \textit{the period lattice of $\phi$}. It is indeed a free, finitely generated and discrete $A$-module. We further call its non-zero elements  \textit{periods of $\phi$}. 

\textit{The logarithm series $\Log_{\phi}=\sum_{i\geq 0} \gamma_i \tau^i\in \Mat_d(\Frac(L)[[\tau]])$ of $\phi$} is defined to be the formal inverse of $\Exp_{\phi}$. It satisfies the following functional equation in $\Mat_d(\Frac(L)[[\tau]])$:
\[
\Log_{\phi}\phi_{\theta}=\dmat_{\phi}[\theta]\Log_{\phi}.
\]
Moreover, there exists a domain $\mathcal{D}\subset \mathbb{C}_{\infty}^d$ such that the logarithm series induces an $\mathbb{F}_q$-linear function $\Log_{\phi}:\mathcal{D}\to \mathbb{C}_{\infty}^d$ which is given by
\[
\Log_{\phi}(\zz)=\sum_{i\geq 0}\beta_i\zz^{(i)},\ \ \zz\in \mathcal{D}\subset \phi(\mathbb{C}_{\infty}).
\]
For later use, we extend the domain of the exponential function of $\phi$ and define $\Exp_{\phi}: \mathbb{T}^d\to \mathbb{T}^{d}$ so that 
\[
\Exp_{\phi}(f)=\sum_{i\geq 0}\alpha_if^{(i)},\ \ f\in \mathbb{T}.
\]
Similarly, we also extend the domain of the logarithm function of $\phi$ and consider $\Log_{\phi}: \mathcal{D}'\to \mathbb{T}^{d}$ so that 
\begin{equation}\label{E:extension}
\Log_{\phi}(f)=\sum_{i\geq 0}\beta_if^{(i)},\ \ f\in \mathcal{D}'
\end{equation}
where $\mathfrak{D}'$ is a subdomain of $\mathbb{T}^d$ containing $\mathcal{D}$.

\subsection{Anderson generating functions}\label{S:Andgen} 
Let $\phi:A\to \Mat_d(L[\tau])$ be an Anderson module and $\zz\in \mathbb{C}_{\infty}^d$. We define \textit{the Anderson generating function $\mathcal{Y}_{\zz}(t)$ of $\phi$ with respect to $\zz$} to be the infinite sum given by 
\[
\mathcal{Y}_{\zz}(t):=\sum_{i=0}^{\infty}\Exp_{\phi}\left(\dmat_{\phi}[\theta]^{-i-1}\zz\right)t^i\in \mathbb{C}_{\infty}[[t]]^d.
\]
Let $B=B_0+B_1\tau+\dots+B_u\tau^u\in \Mat_{m\times \ell}(\mathbb{C}_{\infty}[\tau])$ and $\mathcal{G}\in \Mat_{\ell \times v}(\mathbb{T})$. Following, \cite[(2.1.2)]{NP21}, we set 
\[
\langle B|\mathcal{G}\rangle:= B_0\mathcal{G}+B_1\mathcal{G}^{(1)}+\cdots+B_u\mathcal{G}^{(u)}\in \Mat_{m\times v}(\mathbb{T}).
\]
We now list several fundamental properties of Anderson generating functions.
\begin{lemma}\cite[Prop. 4.2.7, Lem. 4.2.2, Prop. 4.2.12]{NP21} \label{L:AGF} The following statements hold.
\begin{itemize}
    \item[(i)] We have the identity
    \[
    \mathcal{Y}_{\zz}(t)=\sum_{i=0}^{\infty}\alpha_i\left((\dmat_{\phi}[\theta]-t\Id_d)^{-1}\right)^{(i)}\zz^{(i)}
    \]
    which converges in $\mathbb{T}^d=\Mat_{d\times 1}(\mathbb{T})$.
    \item[(ii)] Let $\mathcal{Y}_{\zz}(t)=((\mathcal{Y}_{\zz})_1,\dots,(\mathcal{Y}_{\zz})_d)^{\tr}\in \mathbb{T}^d.$ Then
    \[
    \Res_{t=\theta}\mathcal{Y}_{\zz}:=(\Res_{t=\theta}(\mathcal{Y}_{\zz})_1,\dots, \Res_{t=\theta}(\mathcal{Y}_{\zz})_d)^{\tr}=-\zz.
    \]
    \item[(iii)] Let $\lambda \in \Lambda_{\phi}$. Then
    \[
    \langle\phi_{\theta}|\mathcal{Y}_{\lambda}(t)\rangle=t\mathcal{Y}_{\lambda}(t).
    \]
\end{itemize}
    
\end{lemma}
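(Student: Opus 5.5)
For part (i), I would expand the $i$-th coefficient of $\mathcal{Y}_{\zz}(t)$ using the exponential series. Since $\dmat_{\phi}[\theta]$ has entries in $\mathbb{C}_\infty$, the twisting rule gives
\[
\Exp_{\phi}\bigl(\dmat_{\phi}[\theta]^{-i-1}\zz\bigr)=\sum_{j\ge 0}\alpha_j\bigl(\dmat_{\phi}[\theta]^{(j)}\bigr)^{-i-1}\zz^{(j)} .
\]
Substituting this into $\mathcal{Y}_{\zz}(t)$ and formally interchanging the two sums leaves, for each $j$, the inner sum $\sum_{i\ge0}(\dmat_{\phi}[\theta]^{(j)})^{-i-1}t^i$. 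This is a matrix geometric series equal to $(\dmat_{\phi}[\theta]^{(j)}-t\Id_d)^{-1}$. Because the twist acts only on coefficients and fixes $t$, this inverse is $((\dmat_{\phi}[\theta]-t\Id_d)^{-1})^{(j)}$.

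The main obstacle is justifying the interchange and the convergence in $\mathbb{T}^d$, and I would handle it by explicit estimates. Write $\dmat_{\phi}[\theta]^{(j)}=\theta^{q^j}\Id_d+N^{(j)}$ with $N^{d}=0$. Then
\[
(\dmat_{\phi}[\theta]^{(j)}-t\Id_d)^{-1}=\sum_{k=0}^{d-1}(-1)^k (N^{(j)})^k(\theta^{q^j}-t)^{-k-1}.
\]
Since $\|(\theta^{q^j}-t)^{-1}\|=q^{-q^j}$, this gives
\[
\bigl\|(\dmat_{\phi}[\theta]^{(j)}-t\Id_d)^{-1}\bigr\|\le C\,q^{-q^j}\max(1,|N|)^{(d-1)q^j}
\]
for a constant $C$ independent of $j$. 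Because $\Exp_{\phi}$ is entire, $|\alpha_j|R^{q^j}\to0$ for every $R>0$. Hence the $j$-th term tends to $0$ in $\mathbb{T}^d$, and the double series $\sum_{i,j}$ has terms tending to $0$ uniformly. In the non-archimedean setting this is enough to rearrange it freely.

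For part (ii), I would use the formula from (i) and split off the $j=0$ term. The same estimate, applied with $|t|\le R$ for any $R$ smaller than $q^{q}$, shows that the series over $j\ge1$ converges uniformly on a disc strictly containing $|t|\le q$. That series is therefore regular at $t=\theta$ and contributes no residue. The $j=0$ term is
\[
\sum_{k=0}^{d-1}(-1)^kN^k\zz\,(\theta-t)^{-k-1}.
\]
Only $k=0$ contributes a residue, and $\Res_{t=\theta}(\theta-t)^{-1}=-1$, which gives $-\zz$.

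For part (iii), I would apply $\langle\phi_\theta|\cdot\rangle$ termwise in $t$; this is legitimate because twisting fixes $t$ and $\phi_\theta$ has finitely many terms. The functional equation $\phi_\theta\Exp_\phi=\Exp_\phi\dmat_\phi[\theta]$ gives
\[
\phi_\theta\cdot\Exp_\phi\bigl(\dmat_\phi[\theta]^{-i-1}\lambda\bigr)=\Exp_\phi\bigl(\dmat_\phi[\theta]^{-i}\lambda\bigr).
\]
Summing over $i$, I obtain $\langle\phi_\theta|\mathcal{Y}_\lambda\rangle=\Exp_\phi(\lambda)+t\,\mathcal{Y}_\lambda(t)$. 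This equals $t\,\mathcal{Y}_\lambda(t)$ because $\lambda\in\Lambda_\phi$.
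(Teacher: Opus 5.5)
Your proof is correct and follows essentially the same route as \cite{NP21}, which the paper cites here instead of giving its own proof. You expand $\Exp_\phi$ and interchange the two sums, and your bound on $\|(\dmat_{\phi}[\theta]^{(j)}-t\Id_d)^{-1}\|$ controls every $t$-coefficient uniformly in $i$, which gives the needed summability. You then read off the residue from the $j=0$ term after continuing the $j\geq 1$ tail to a disc of radius $<q^q$, and deduce (iii) from $\langle\phi_\theta|\mathcal{Y}_\zz\rangle=t\mathcal{Y}_\zz+\Exp_\phi(\zz)$ via the functional equation.
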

\subsection{$t$-motives} \label{S:tmot} Throughout this subsection, we assume that $ L\subseteq \mathbb{C}_{\infty}$ is a perfect field containing $K$. We define the ring $L[t,\tau]$ of polynomials in $t$ and $\tau$ with coefficients in $L$ subject to the relations
\[
tx=xt, \ \ t\tau=\tau t, \ \ \tau x=x^q \tau, \ \ x\in L.
\]

\begin{definition}
	\begin{itemize}
		\item[(i)]  \textit{A (abelian) $t$-motive $M$ over $L$} is a left $L[t,\tau]$-module which is free and finitely generated over $\mathbb{C}_{\infty}[t]$ and $\mathbb{C}_{\infty}[\tau]$ so that  there exists a non-negative integer $\nu$ satisfying
		\[
		(t-\theta)^{\nu}M\subset \tau M.
		\] 
		\item[(ii)] Let $M_1$ and $M_2$ be $t$-motives over $L$.  Morphisms between $M_1$ and $M_2$ are given by left $L[t,\sigma]$-module homomorphisms.
		\item[(iii)]  \textit{The tensor product of $M_1$ and $M_2$} forms a $t$-motive $M_1\otimes_{L[t]}M_2$ where $\tau$ acts diagonally.
	\end{itemize}
\end{definition}

Let $\{\widetilde{\bm}_1,\dots,\widetilde{\bm}_r\}$ be an $L[t]$-basis for $M$ and let $\Theta\in  \Mat_r(L[t])$ be such that 
\begin{equation}\label{E:003}
\tau \cdot \begin{bmatrix}\widetilde{\bm}_1\\ \vdots\\
\widetilde{\bm}_r
\end{bmatrix}=\Theta\begin{bmatrix}\widetilde{\bm}_1\\ \vdots\\
\widetilde{\bm}_r
\end{bmatrix}.
\end{equation}
A $t$-motive $M$ is called \textit{rigid analytically trivial} if there exists $\Upsilon\in \GL_r(\mathbb{T})$ such that 
\[
\Upsilon^{(1)}=\Theta\Upsilon.
\]
We  call $\Upsilon$ \textit{a rigid analytic trivialization of $M$}.

Due to Anderson, there exists an anti-equivalence between a subcategory of Anderson modules, namely the category of \textit{abelian Anderson modules defined over $L$} and the category of $t$-motives over $L$. In what follows, we briefly explain this correspondence. 

Let $\phi$ be an Anderson module of dimension $d$ which is defined over $L$. We set $M_{\phi}:=\Mat_{1\times d}(L[\tau])$. Note that $M_{\phi}$ is naturally a free and finitely generated $L[\tau]$-module.  Moreover it is equipped with an $L[t]$-module structure given by 
\begin{equation}\label{E:tmodstr}
\alpha t^i\cdot m:=\alpha  m  \phi_{\theta^i}, \ \ m\in M_\phi, \ \ \alpha\in L.
\end{equation}
Hence, $M_{\phi}$ forms a left $L[t,\tau]$-module. If $M_{\phi}$ is also a free $L[t]$-module of finite rank, we say that $\phi$ is \textit{an abelian Anderson module}.

On the other hand, let $\mathbf{b}:=\{\bb_1,\dots,\bb_d\}$ be an $L[\tau]$-basis for a $t$-motive $M$. Then there exists a matrix $\mathfrak{M}\in \Mat_d(L[\tau])$ such that 
\begin{equation}\label{E:002}
t\cdot \mathbf{b}=\mathfrak{M}\mathbf{b}.
\end{equation}
Since $t$ commutes with the elements in $L[\tau]$, representing an arbitrary element of $M$ as
\[
(c_1,\dots,c_d)\begin{bmatrix}
    \bb_1\\
    \vdots\\
    \bb_d
\end{bmatrix}=\mathfrak{c}\mathbf{b}
\]
where $c_1,\dots,c_d\in L[\tau]$ and $\mathfrak{c}:=[c_1,\dots,c_d]\in \Mat_{1\times d}(L[\tau])$, we then obtain 
\[
\alpha t^i \cdot \mathfrak{c}\mathbf{b}=\alpha \mathfrak{c}\cdot t^i \mathbf{b}=\alpha \mathfrak{c}\mathfrak{M}^i\mathbf{b}, \ \ \alpha \in L.
\]

Now setting $\phi:A\to \Mat_{d}(L[\tau])$ to be the $\mathbb{F}_q$-algebra homomorphism given by $\phi_{\theta}:=\mathfrak{M}$ and using the fact that $(t-\theta)^{\nu}M\subset \tau M$ for some $\nu \geq 0$, we see that $\phi$ forms an Anderson module. Furthermore, note that the above construction is indeed compatible with the $L[t]$-module action described in \eqref{E:tmodstr}. Hence, $\phi$ is abelian.

\subsection{$t$-comotives} \label{S:dualmot} Our goal is to review the notion of $t$-comotives. We continue to assume that $L\subseteq \mathbb{C}_{\infty}$ is a perfect field and define the non-commutative polynomial ring $\Mat_{d}(L[\sigma])$ subject to the relation
\[
\sigma B=B^{(-1)} \sigma, \ \ B\in \Mat_{d}(L).
\]
We define the ring $L[t,\sigma]$ of polynomials in $t$ and $\sigma$ with coefficients in $L$ subject to the relations
\[
tx=xt, \ \ t\sigma=\sigma t, \ \ \sigma x=x^{(-1)}\sigma=x^{1/q} \sigma, \ \ x\in L.
\]
We further define the $*$-operator acting on $ g= \sum_{i\geq 0}c_i\tau^i\in L[\tau]$  given by 
\[
g^{*}:=\sum_{i\geq 0}c_i^{(-i)}\sigma^i\in L[\sigma].
\]
For any $B=(b_{ij})_{ij}\in \Mat_d(L[\tau])$, we also let
\[
B^{*}:=(b_{ij}^{*})^{\tr}_{ij}\in \Mat_d(L[\sigma]).
\]
\begin{definition}
	\begin{itemize}
		\item[(i)] \textit{A ($t$-finite) $t$-comotive $N$} is a left $L[t,\sigma]$-module which is free and finitely generated over $L[t]$ and $L[\sigma]$ so that there exists $\mu\in \mathbb{Z}_{\geq 0}$ satisfying
		\[
		(t-\theta)^{\mu} N\subset \sigma N.
		\] 
		\item[(ii)] Let $N_1$ and $N_2$ be $t$-comotives. Morphisms between $N_1$ and $N_2$ are given by left $L[t,\sigma]$-module homomorphisms.
		\item[(iii)] The tensor product of $N_1$ and $N_2$ forms a $t$-comotive  $N_1\otimes N_2:=N_1\otimes_{L[t]} N_2$ where $\sigma$ acts diagonally.
	\end{itemize}
\end{definition}

Let $\{\bn_1,\dots,\bn_r\}$ be an $L[t]$-basis for $N$ and let $\mathfrak{C}\in  \Mat_r(L[t])$ be such that 
\[
\sigma \cdot \begin{bmatrix}\bn_1\\ \vdots\\
\bn_r
\end{bmatrix}=\mathfrak{C}\begin{bmatrix}\bn_1\\ \vdots\\
\bn_r
\end{bmatrix}.
\] 
We say that $N$ is \textit{rigid analytically trivial} if there exists $\Psi\in \GL_r(\mathbb{T})$ such that 
\[
\Psi^{(-1)}=\mathfrak{C}\Psi.
\]
We call $\Psi$ \textit{a rigid analytic trivialization of $N$}.

In his unpublished notes, Anderson showed that there exists an equivalence between a subcategory of Anderson modules, namely the category of \textit{$t$-finite Anderson modules} and the category of $t$-comotives. We refer the reader to \cite[\S2.5.2]{HJ20} for full details of this correspondence and in what follows, we only review its one direction, namely, we attach a $t$-comotive to an Anderson module. Let $\phi$ be an Anderson module of dimension $d$ which is defined over $L$. We set $N_{\phi}:=\Mat_{1\times d}(L[\sigma])$. It is clear that $N_{\phi}$ is  a free and finitely generated $L[\sigma]$-module. It is also equipped with an  $L[t]$-module structure given by 
\[
\alpha t^i\cdot n:=\alpha  n \phi_{\theta^i}^{*}, \ \ n\in N_\phi, \ \ \alpha\in L.
\]
Hence, $N_{\phi}$ forms a left $L[t,\sigma]$-module. If $N_{\phi}$ is also a free $L[t]$-module of finite rank, we say that $\phi$ is \textit{a $t$-finite Anderson module}. We note that, due to Maurischat \cite[Thm. A]{Mau21},  being an abelian Anderson module is equivalent to being $t$-finite.

\subsubsection{Uniformizable abelian Anderson modules}\label{S:uniAndmod} In what follows, we  assume that $M$ is a $t$-motive over $\mathbb{C}_{\infty}$ whose corresponding abelian Anderson module, say $\phi$, is uniformizable. In this case, due to Namoijam and Papanikolas, there exists a matrix  $V\in \GL_r(\mathbb{C}_{\infty})$ whose construction is explicitly described in \cite[(4.4.12)]{NP21}. Let $\bm:=\{\bm_1,\dots.\bm_r\}:=(V^{\tr})^{(-1)}\{\widetilde{\bm}_1,\dots,\widetilde{\bm}_r\}$ be another $\mathbb{C}_{\infty}[t]$-basis for $M$. 
For each $1\leq \mu \leq r$ and $1\leq \ell \leq d$, there exist  elements $ \mathcal{C}_{\ell}^{[\mu]}\in \mathbb{C}_{\infty}[\tau] $ such that 
\begin{equation}\label{E:004}
\mathbf{m}_{\mu}=\sum_{\ell=1}^{d}\mathcal{C}_{\ell}^{[\mu]}\bb_{\ell}.
\end{equation}

We set 
\[
\Phi:=(V^{(-1)})^{-1}\Theta^{\tr}V\in \Mat_{r}(\mathbb{C}_{\infty}[t]).
\]
Then \eqref{E:003} implies
\begin{equation}\label{E:anotherbasis}
    \tau \cdot \mathbf{m}=\tau \cdot (V^{\tr})^{(-1)}\begin{bmatrix}\widetilde{\bm}_1\\ \vdots\\
\widetilde{\bm}_r
\end{bmatrix}=
V^{\tr}\tau \cdot \begin{bmatrix}\widetilde{\bm}_1\\ \vdots\\
\widetilde{\bm}_r
\end{bmatrix}=V^{\tr}\Theta ((V^{\tr})^{(-1)})^{-1}(V^{\tr})^{(-1)} \begin{bmatrix}\widetilde{\bm}_1\\ \vdots\\
\widetilde{\bm}_r
\end{bmatrix}=\Phi^{\tr}\mathbf{m}.
\end{equation}
Since $\bm$ is a $\mathbb{C}_{\infty}[t]$-basis for $M$, for each $1\leq \ell \leq d$ and $1\leq \mu \leq r$, there exist elements 
$
\mathcal{B}_{\mu}^{[\ell]}\in \mathbb{C}_{\infty}[t]
$ such that
\begin{equation}\label{E:008}
   \bb_{\ell}=\sum_{\mu=1}^{r}\mathcal{B}_{\mu}^{[\ell]}\mathbf{m}_{\mu}.
\end{equation}

We fix an $A$-module basis $\{\lambda_1,\dots,\lambda_{r}\}$ for  $\Lambda_{\phi}$ and for $1\leq \nu \leq r$, let $\mathcal{Y}_{\nu}(t)=(\mathcal{Y}_{\nu 1},\dots,\mathcal{Y}_{\nu d})^{\tr}$ be the Anderson generating function of $\phi$ with respect to $\lambda_\nu$. Consider the matrix
\[
\Upsilon:=\begin{pmatrix}
    \langle\widetilde{\bm}_1|\mathcal{Y}_{1}(t)\rangle & \cdots & \langle\widetilde{\bm}_1|\mathcal{Y}_r(t)\rangle\\
    \vdots & & \vdots \\
    \langle\widetilde{\bm}_r|\mathcal{Y}_1(t)\rangle & \cdots & \langle\widetilde{\bm}_r|\mathcal{Y}_r(t)\rangle
\end{pmatrix}\in \Mat_{r}(\mathbb{T}).
\]
Here, we again note that, letting $\mathfrak{U}:=(\mathcal{C}_{\ell}^{[\mu]})_{\mu \ell}\in \Mat_{r\times d}(\mathbb{C}_{\infty}[\tau])$, we realize $\widetilde{\mathbf{m}}_{\mu}$ as the element $(\widetilde{\mathcal{C}}_{1}^{[\mu]},\dots,\widetilde{\mathcal{C}}_{d}^{[\mu]})\in \Mat_{1\times d}(\mathbb{C}_{\infty}[\tau])\cong M$ where $(\widetilde{\mathcal{C}}_{1}^{[\mu]},\dots,\widetilde{\mathcal{C}}_{d}^{[\mu]})$ is the $\mu$-th row of $((V^{\tr})^{(-1)})^{-1}\mathfrak{U}$. 

Observe that 
\begin{equation}\label{E:007}
(\langle\widetilde{\mathbf{m}}_{\mu},\mathcal{Y}_{\nu}(t)\rangle)^{(1)}=\langle\tau \widetilde{\mathbf{m}}_{\mu},\mathcal{Y}_{\nu}(t)\rangle.
\end{equation}
 Thus, using \eqref{E:007} and \cite[(4.3.8), Lem. 4.3.9]{NP21}, we see that $\Upsilon\in \GL_{r}(\mathbb{T})$ and 
 \begin{equation}\label{E:tauactiongeneral}
   \Upsilon^{(1)}=\Theta \Upsilon.
 \end{equation}
In other words, $\Upsilon$ is a rigid analytic trivialization of $M$. On the other hand, by \cite[\S4.4]{NP21}, there exists a $\mathbb{C}_{\infty}[t]$-basis $\mathbf{n}:=\{\mathbf{n}_1,\dots,\mathbf{n}_r\}$ of $N_{\phi}$ such that 
\[
\sigma \mathbf{n}=\Phi \mathbf{n}.
\]
We set $\Psi:=((\Upsilon^{(1)})^{\tr}V)^{-1}\in \GL_{r}(\mathbb{T})$. Then, by the definition of $\Phi$ given in \S\ref{S:uniAndmod} and \eqref{E:tauactiongeneral}, we see that
\[
\Psi^{(-1)}=(V^{(-1)})^{-1}(\Upsilon^{\tr})^{-1}= (V^{(-1)})^{-1}\Theta^{\tr}((\Upsilon^{\tr})^{(1)})^{-1} =(V^{(-1)})^{-1}\Theta^{\tr}VV^{-1}((\Upsilon^{\tr})^{(1)})^{-1}=\Phi \Psi.
\]
Therefore, $\Psi$ is a rigid analytic trivialization for the $t$-comotive $N_{\phi}$ of $\phi$ (\cite[Prop. 4.67]{NP21}).

Set $\mathfrak{S}_{-1}:=\Id_r$, $\mathfrak{S}_0:=\Phi^{\tr}$ and for $n\geq 1$, consider $\mathfrak{S}_{n}:=(\Phi^{\tr})^{(n)}\cdots (\Phi^{\tr})^{(1)}\Phi^{\tr} $. For each $1\leq \mu \leq  r$, let $\mathfrak{e}_{\mu}\in \Mat_{r\times 1}(\mathbb{F}_q)$ be the $\mu$-th unit vector. Then, we have a $\mathbb{C}_{\infty}[t,\tau]$-module isomorphism $\widetilde{\varphi}_{\phi}:M\cong \Mat_{1\times d}(\mathbb{C}_{\infty}[\tau])\to \Mat_{1\times r}(\mathbb{C}_{\infty}[t])$ given by 
\begin{equation}\label{E:isomtmot}
\widetilde{\varphi}_{\phi}\left(\left(\sum_{i\geq 0}a_{1,i}\tau^i,\dots,\sum_{i\geq 0}a_{d,i}\tau^i\right)\right):=\sum_{1\leq \ell \leq d}\sum_{i\geq 0}a_{\ell,i}\sum_{\mu=1}^{r}(\mathcal{B}_{\mu}^{[\ell]})^{(i)}\mathfrak{e}_{\mu}^{\tr}\mathfrak{S}_{i-1}.
\end{equation}

Note that our ultimate goal in \S\ref{S:varphimap} is to construct a map $\varphi_{\phi}$ extending the isomorphism $\widetilde{\varphi}_{\phi}$ to a certain subset of $\Mat_{1\times d}(\mathbb{C}_{\infty}[[\tau]])$ containing the isomorphic copy $\Mat_{1\times d}(\mathbb{C}_{\infty}[\tau])$ of $M$.

\subsection{Examples} In the last subsection, we provide some examples of abelian and uniformizable Anderson modules as well as their corresponding $t$-motives and $t$-comotives.
\subsubsection{Drinfeld modules} \label{S:Drinfeld} \textit{A Drinfeld module of rank $r$ defined over $L$} is a one dimensional Anderson module $\phi:A\to L[\tau]$ defined by 
\begin{equation}\label{E:Drinfeldmod}
\phi_{\theta}=\theta+k_1\tau+\cdots +k_r\tau^r\in L[\tau], \ \ k_r\neq 0.
\end{equation}
 We further call the Drinfeld module $C$ of rank one given by $C_{\theta}:=\theta+\tau$ \textit{the Carlitz module}. 
 
 We call an $A$-module $\Lambda\subset \mathbb{C}_{\infty}$ \textit{an $A$-lattice of rank $r$} if it is free of rank $r$ over $A$ and it is discrete, namely its intersection with any ball in $\mathbb{C}_{\infty}$ of finite radius is finite. By Drinfeld \cite[\S5]{Dri74}, we know that there exists a one-to-one correspondence between Drinfeld modules of rank $r$ over $\mathbb{C}_{\infty}$ and $A$-lattices of rank $r$. We also refer the reader to \cite[\S4]{Goss} for more details on this correspondence. 

 We define $M_{\phi}:=L[\tau]$ and equip it with the $L[t]$-module structure given by 
\[
ct^i\cdot g\tau^j:=cg\tau^j\phi_{\theta^i},  \ \ c,g\in L.
\]
Note that $M_{\phi}$ is a left $L[t,\tau]$-module so that $(t-\theta)M_{\phi}\subset \tau M_{\phi}$. We define the matrix
\[
\Theta_{\phi}:=\begin{pmatrix}
&1& & &  \\
& & \ddots & & \\
& & & \ddots & \\
& & & &  1\\
\frac{t-\theta}{k_r}&-\frac{k_1}{k_r} & \dots & \dots & -\frac{k_{r-1}}{k_r}
\end{pmatrix}\in \GL_r(\mathbb{T})\cap \Mat_r(L[t]).
\]
In this case, we may choose an $L[t]$-basis $\bm=[1,\dots,\tau^{r-1}]^{\tr}\in \Mat_{r\times 1}(M_{\phi})$ for $M_{\phi}$ so that 
\[
\tau \cdot \bm=\Theta_{\phi} \bm.
\]
 Hence, $\phi$ is an abelian Anderson module. Moreover, $\{1\}$ forms an $L[\tau]$-basis for $M_{\phi}$. We also know that $\phi$ is always uniformizable and its period lattice $\Lambda_{\phi}$ is an $A$-lattice of rank $r$.

Let $\{\lambda_1,\dots,\lambda_r\}$ be an $A$-basis for $\Lambda_{\phi}$. For any $i\in \{1,\dots,r\}$, we denote by $f_{\lambda_i}(t)$  the Anderson generating function of $\phi$ with respect to $\lambda_i$. Consider the matrix 
\begin{equation}\label{E:Upsilon}
\Upsilon_{\phi}:=\begin{pmatrix} f_{\lambda_1}(t) & \cdots & \cdots & f_{\lambda_r}(t)\\
f_{\lambda_1}(t)^{(1)} & \cdots & \cdots & f_{\lambda_r}(t)^{(1)}\\
\vdots  & & & \vdots\\
f_{\lambda_1}(t)^{(r-1)} & \cdots & \cdots &f_{\lambda_r}(t)^{(r-1)}
\end{pmatrix}\in \Mat_{r\times r}(\mathbb{T}).
\end{equation}
By \cite[\S4.2]{Pel08} (see also Lemma \ref{L:AGF}(iii)), we know that $\Upsilon_{\phi}\in \GL_r(\mathbb{T})$ and moreover it satisfies 
\begin{equation}\label{E:RAT1}
\Upsilon_{\phi}^{(1)}=\Theta_{\phi} \Upsilon_{\phi}.
\end{equation}
Hence $M_{\phi}$ is rigid analytically trivial. Moreover, in this case, the matrix $V\in \GL_r(L)$ described in \S\ref{S:uniAndmod} becomes
\begin{equation}\label{E:V def}
V=\begin{pmatrix}
k_1&k_2^{(-1)} &k_3^{(-2)}& \dots &k_r^{(1-r)}\\
\vdots &\vdots&\vdots&  \iddots & \\
\vdots & \vdots& k_r^{(-2)} & & \\
\vdots &k_r^{(-1)} & & & \\
k_r &  & & & 
\end{pmatrix}\in \GL_r(L).
\end{equation}
Observe that  \begin{equation}\label{E:Phi def Drinfeld modules}
\Phi_{\phi}:=(V^{(-1)})^{-1}\Theta_{\phi}^{\tr}V=\begin{pmatrix}
&1& & &  \\
& & \ddots & & \\
& & & \ddots & \\
& & & &  1\\
\frac{t-\theta}{k_r^{(-r)}}&-\frac{k_1^{(-1)}}{k_r^{(-r)}} & \dots & \dots & -\frac{k_{r-1}^{(-(r-1))}}{k_r^{(-r)}}
\end{pmatrix}\in  \GL_r(\mathbb{T})\cap \Mat_r(L[t]).
\end{equation}

Next, we discuss the $t$-comotive of $\phi$. We define $N_{\phi}:=L[\sigma]$ equipped with the $L[t]$-module action given by 
\[
\alpha t^i\cdot \xi \sigma^j:=\alpha \xi \sigma^j\phi_{\theta^i}^{*},  \ \  \alpha,\xi\in L.
\]
Note that $N_{\phi}$ is free and finitely generated over $L[t]$ and $L[\sigma]$ satisfying $(t-\theta)N_{\phi}\subset \sigma N_{\phi}$.
We can let $\bn=[1,\dots,\sigma^{r-1}]^{\tr}\in \Mat_{r\times 1}(N_{\phi})$ so that 
\begin{equation}\label{E:tauact}
\sigma \cdot \bn=\Phi_{\phi} \bn.
\end{equation}
We see that $\bn$ forms an $L[t]$-basis for $N_{\phi}$. Moreover, $\{1\}$ forms an $L[\sigma]$-basis for $M_{\phi}$. 
We define  $\Psi_{\phi}:=V^{-1}((\Upsilon_{\phi}^{(1)})^{\tr})^{-1}\in \GL_r(\mathbb{T})$. By \S\ref{S:dualmot}, we know that
\begin{equation}\label{E:RAT2}
\Psi_{\phi}^{(-1)}=\Phi_{\phi}\Psi_{\phi}.
\end{equation}
Therefore $N_{\phi}$ is rigid analytically trivial.   
\subsubsection{The $k$-th tensor power $C^{\otimes k}$ of the Carlitz module}\label{S:Carlitztensor}
Assume that $k\in \mathbb{Z}_{\geq 1}$. We define the left $L[t,\tau]$-module 
\[
M_{C^{\otimes k}}:=M_{C}\otimes_{L[t]} \cdots \otimes_{L[t]} M_{C}=L[\tau]\otimes_{L[t]}\cdots \otimes_{L[t]} L[\tau]
\]
so that $\tau$ acts diagonally. Let $\bm_1$ be an  $L[t]$-basis for $M_C$. Then $\bm:=\bm_1\otimes \cdots \otimes \bm_1\in M_{C^{\otimes k}}$ is an $L[t]$-basis for $M_{C^{\otimes k}}$ so that 
\[
\tau \cdot \bm=(t-\theta)^k \bm.
\]
Furthermore, the set $\{\bm, (t-\theta)\bm,\dots,(t-\theta)^{k-1}\bm\}$ forms an $L[\tau]$-basis for $M_{C^{\otimes k}}$ and hence it is of dimension $k$ over $L[\tau]$. Therefore, $M_{C^{\otimes k}}\cong \Mat_{1\times k}(L[\tau])$ as $L[\tau]$-modules. The corresponding Anderson module $C^{\otimes k}:A\to \Mat_k(L[\tau])$ is given by
\[
C^{\otimes k}_{\theta}:=\begin{pmatrix}
		\theta&1& & \\
		& \ddots&\ddots & \\
		& & \ddots & 1\\
		& & & \theta  
		\end{pmatrix}+\begin{pmatrix}
		& &  & &\\
		& & & &\\
		& &  & &\\
		1&&&  &
		\end{pmatrix}\tau.
		\]
   Since $M_{C^{\otimes k}}$ is of finite rank over $L[t]$, $C^{\otimes k}$ is an abelian Anderson module.
   
Let us fix a $(q-1)$-st root of $-\theta$. \textit{The Anderson-Thakur function} $\omega_C$ is given by the infinite product 
\begin{equation}\label{D:omega_C}
\omega_C:=\sum_{n=0}^{\infty}\Exp_{C}\left(\frac{\tilde{\pi}}{\theta^{n+1}}\right)t^{n+1}=(-\theta)^{1/(q-1)}\prod_{j=0}^{\infty}\left(1-\frac{t}{\theta^{q^j}}\right)^{-1}\in \mathbb{T}^{\times}.
\end{equation}

We let $\mathbb{S}_{-1}:=1$, $\mathbb{S}_0:=t-\theta\in A[t]$ and for $n\geq 1$, define  
\begin{equation}\label{E:sn}
\mathbb{S}_n:=(t-\theta^{q^n})\cdots (t-\theta)\in A[t].
\end{equation}
Note that, for $\ell\geq 1$, Lemma \ref{L:AGF}(iii) implies
\begin{equation}\label{E:AGF4}
(\omega_C^k)^{(\ell)}=\mathbb{S}_{\ell-1}^k\omega_C^k.
\end{equation}
Therefore, letting $\ell=1$ above, we see that $\omega_C^k$ is a rigid analytic trivialization for $M_{C^{\otimes k}}$, implying that $M_{C^{\otimes k}}$ is rigid analytically trivial. Thus, by \cite[Thm. 4]{And86}, $C^{\otimes k}$ is uniformizable and its period lattice $\Lambda_{C^{\otimes k}}$ is of rank $1$.

We now discuss the $t$-comotive of $C^{\otimes k}$. Consider the $L[t,\sigma]$-module given by
\[
N_{C^{\otimes k}}:=N_{C}\otimes_{L[t]} \cdots \otimes_{L[t]} N_{C}=L[\sigma]\otimes_{L[t]}\cdots \otimes_{L[t]} L[\sigma]
\]
where $\sigma$ acts diagonally.  Let $\bn_1$ be an $L[t]$-basis for $N_C$. Then $\bn=\bn_1\otimes \cdots \otimes \bn_1\in N_{C^{\otimes k}}$ is an $L[t]$-basis for $N_{C^{\otimes k}}$ so that 
\[
\tau \bn=(t-\theta)^k \bn.
\]
Furthermore, the set $\{\bn, (t-\theta)\bn,\dots,(t-\theta)^{k-1}\bn\}$ forms an $L[\tau]$-basis for $N_{C^{\otimes k}}$ and hence it is of dimension $k$ over $L[\sigma]$. Therefore, $N_{C^{\otimes k}}\cong \Mat_{1\times k}(L[\sigma])$ as $L[\sigma]$-modules.

We now define the element 
$
\Omega(t):=(\omega_C^{(1)})^{-1}\in \mathbb{T}^{\times}.
$
Note that 
\begin{equation}\label{E:RAT3}
(\Omega(t)^k)^{(-1)}=(t-\theta)^k\Omega(t)
\end{equation}
implying that $\Omega(t)^k$ is a rigid analytic trivialization of $N_{C^{\otimes k}}$. Therefore $N_{C^{\otimes k}}$ is rigid analytically trivial.
\section{Proof of Theorem \ref{T:Intro log formula}}\label{S:Abelian And Modules}
In this section we will give the proof of Theorem \ref{T:Intro log formula}. We start out with a general formula related to Anderson generating functions. Then we describe the motivic map $\delta_{1,\zz}^{M}$ of the second author. Following that, we give the proof of Theorem \ref{T:Intro log formula}. Throughout this section, we fix $M$ to be an abelian $t$-motive of rank $r$ so that its corresponding Anderson module $\phi$ is uniformizable. We recall the notation from \S\ref{S:uniAndmod} and let $L=\mathbb{C}_{\infty}$.

\subsection{Anderson generating function formulas}
Recall that $\mathbf{b}=\{\bb_1,\dots,\bb_d\}$ is the $\mathbb{C}_{\infty}[\tau]$-basis of $M$ as in \eqref{E:002}. Then the corresponding Anderson module
$\phi:A\to \Mat_{d}(\mathbb{C}_{\infty}[\tau])$ may be defined by 
\[
\phi_{\theta}=\mathfrak{M}.
\]
 We again fix an $A$-module basis $\{\lambda_1,\dots,\lambda_{r}\}$ for  $\Lambda_{\phi}$ and for $1\leq \nu \leq r$, let $\mathcal{Y}_{\nu}(t)=(\mathcal{Y}_{\nu 1},\dots,\mathcal{Y}_{\nu d})^{\tr}$ be the Anderson generating function of $\phi$ with respect to $\lambda_\nu$. By Lemma \ref{L:AGF}(iii), we have 
\begin{equation}\label{E:001}
t\begin{pmatrix}
    \mathcal{Y}_{\nu 1}\\
    \vdots\\
    \vdots\\
    \mathcal{Y}_{\nu d}
\end{pmatrix}=\phi_{\theta}\begin{pmatrix}
    \mathcal{Y}_{\nu 1}\\
    \vdots\\
    \vdots\\
    \mathcal{Y}_{\nu d}
\end{pmatrix}=\mathfrak{M}\begin{pmatrix}
    \mathcal{Y}_{\nu 1}\\
    \vdots\\
    \vdots\\
    \mathcal{Y}_{\nu d}
\end{pmatrix}.
\end{equation}

For each $1\leq \ell \leq d$ and $1\leq \mu \leq r$, we now recall the elements $\mathcal{C}_{\mu}^{[\ell]}$ and $\mathcal{C}_{\ell}^{[\mu]}$ defined in \S\ref{S:Abelian And Modules}. Combining \eqref{E:004} and \eqref{E:008}, for each $1\leq \ell \leq d$, we thus have a system of equations in $\Mat_{1\times d}(\mathbb{C}_{\infty}[\tau])\cong M$ given by
\[
\bb_{\ell}=\left( \mathcal{B}^{[\ell]}_{1}\mathcal{C}_{1}^{[1]}+\cdots+\mathcal{B}_r^{[\ell]}\mathcal{C}_1^{[r]} \right)\cdot \bb_1+\cdots+\left( \mathcal{B}^{[\ell]}_{1}\mathcal{C}_{d}^{[1]}+\cdots+\mathcal{B}_r^{[\ell]}\mathcal{C}_{d}^{[r]} \right)\cdot \bb_d.
\]
Using the $t$-action on $\bb=\{\bb_1,\dots,\bb_d\}$ given in \eqref{E:002}, we then obtain elements $\mathfrak{B}_{1},\dots,\mathfrak{B}_{d}\in \mathbb{C}_{\infty}[\tau]$ such that 
\[
\bb_{\ell}=\mathfrak{B}_1\cdot \bb_1+\cdots+\mathfrak{B}_{d}\cdot \bb_d.
\]
Since $\{\bb_1,\dots,\bb_{d}\}$ is $\mathbb{C}_{\infty}[\tau]$-linearly independent in $M$, we have
\begin{equation}\label{E:linindep}
\mathfrak{B}_{i}=\begin{cases} 1 & \text{ if } i=\ell\\
0 & \text{ otherwise. }    
\end{cases}
\end{equation}
Now let us set 
\begin{multline}\label{E:009}
\mathfrak{h}_{\nu \ell}:=\sum_{\mu=1}^{r}\mathcal{B}_{\mu}^{[\ell]}\langle\mathbf{m}_\mu,\mathcal{Y}_{\nu}(t)\rangle=\sum_{\mu=1}^{r}\mathcal{B}_{\mu}^{[\ell]}\sum_{\ell=1}^{d}\mathcal{C}_{\ell}^{[\mu]}\mathcal{Y}_{\nu \ell}\\
=\left( \mathcal{B}^{[\ell]}_{1}\mathcal{C}_{1}^{[1]}+\cdots+\mathcal{B}_r^{[\ell]}\mathcal{C}_1^{[r]} \right)\cdot \mathcal{Y}_{\nu 1}+\cdots+\left( \mathcal{B}^{[\ell]}_{1}\mathcal{C}_{d}^{[1]}+\cdots+\mathcal{B}_r^{[\ell]}\mathcal{C}_{d}^{[r]} \right)\cdot \mathcal{Y}_{\nu d}\in \mathbb{T}.
\end{multline}
Here, we realize $\mathbf{m}_{\mu}$ as the element $(\mathcal{C}_{1}^{[\mu]},\dots,\mathcal{C}_{d}^{[\mu]})\in \Mat_{1\times d}(\mathbb{C}_{\infty}[\tau])\cong M$. 
Since, by \eqref{E:002} and \eqref{E:001}, the $t$-action on $\{\bb_1,\dots,\bb_d\}$ and $\{\mathcal{Y}_{\nu 1},\dots,\mathcal{Y}_{\nu d}\}$ are the same, we obtain
\[
\mathfrak{h}_{\nu \ell}=\mathfrak{B}_1\cdot \mathcal{Y}_{\nu 1}+\cdots+\mathfrak{B}_{d}\cdot \mathcal{Y}_{\nu d}
\]
implying, by \eqref{E:linindep}, that
\begin{equation}\label{E:0015}
\mathcal{Y}_{\nu \ell}=\mathfrak{h}_{\nu \ell}=\sum_{\mu=1}^{r}\mathcal{B}_{\mu}^{[\ell]}\langle\mathbf{m}_\mu,\mathcal{Y}_{\nu}(t)\rangle.
\end{equation}

 Let $
\mathcal{R}_{\phi}:=(\alpha_1,\dots,\alpha_{d})^{\tr}\in \mathbb{R}_{\geq 0}^{d}$
be the radius of convergence of $\Log_{\phi}$, namely, for $\zz=(z_1,\dots,z_d)\in \mathbb{C}_{\infty}^d$, $\Log_{\phi}(\zz)$ converges in $\mathbb{C}_{\infty}^d$ if and only if for each $1\leq \ell \leq d$, we have $|z_{\ell}|<\alpha_{\ell}$.  We continue with a crucial lemma that will be useful to state our main result in this section. 

\begin{lemma}\label{L:integerm}
 There exists a unique smallest non-negative integer $\mathfrak{m}_{\phi}$ such that if 
\[
\frac{1}{\theta^{\mathfrak{m}_{\phi}}}\Exp_{\phi}\left(\dmat_{\phi}[\theta]^{-i}\lambda_{\nu}\right)=(z_1,\dots,z_d)^{\tr},
\]
then for each $1\leq \nu \leq r$ and $i\geq 1$, we have 
\[
|z_{\ell}|<\alpha_{\ell}, \ \ 1\leq \ell \leq d.
\]
\end{lemma}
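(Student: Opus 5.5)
We will show that the set of vectors $\{\Exp_{\phi}(\dmat_{\phi}[\theta]^{-i}\lambda_{\nu}) : 1\leq \nu\leq r,\ i\geq 1\}$ is bounded in each coordinate. Once this holds, a large enough power of $\theta^{-1}$ moves all of them into the polydisc of convergence $\{|z_\ell|<\alpha_\ell\}$. The minimal such exponent then exists and is unique, since the non-negative integers are well ordered.

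The first step is to check that $\alpha_\ell>0$ for every $\ell$. The logarithm series has a positive radius of convergence in each coordinate. This follows from the functional equation $\Log_{\phi}\phi_{\theta}=\dmat_{\phi}[\theta]\Log_{\phi}$, which gives a recursion for the coefficients $\beta_i$. Since $\dmat_{\phi}[\theta]=\theta\Id_d+N$ with $N$ nilpotent, one gets bounds of the form $|\beta_i|\leq C^{q^i}$, as in \cite{And86}.

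The second step is the uniform bound. For fixed $\nu$ and $i\to\infty$, we have $|\dmat_{\phi}[\theta]^{-i}\lambda_\nu|\to 0$, because $\dmat_{\phi}[\theta]^{-1}=\theta^{-1}(\Id_d+\theta^{-1}N)^{-1}$ and the series in $N$ is finite. Moreover $\Exp_{\phi}(\zz)=\zz+\sum_{j\geq1}\alpha_j\zz^{(j)}$, so for $|\zz|$ small enough we get $|\Exp_{\phi}(\zz)|=|\zz|$, by the standard estimate on the $\alpha_j$. Hence only finitely many $i$ give vectors outside a fixed small ball, and each of those finitely many values is a finite vector. As a cleaner alternative, Lemma~\ref{L:AGF}(i) shows that $\mathcal{Y}_{\lambda_\nu}(t)\in\mathbb{T}^d$. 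Its coefficients are exactly $\Exp_{\phi}(\dmat_{\phi}[\theta]^{-i-1}\lambda_\nu)$, and these tend to $0$, so they are bounded by $\|\mathcal{Y}_{\lambda_\nu}\|$. Taking the maximum over the finitely many $\nu$ gives a bound $R$ valid for all coordinates.

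The third step is to choose the exponent. Pick $m$ with $q^{-m}R<\min_\ell\alpha_\ell$. Then every coordinate satisfies $|z_\ell|\leq q^{-m}R<\alpha_\ell$. The set of admissible $m\in\mathbb{Z}_{\geq0}$ is nonempty, and it is upward closed because $|\theta^{-1}|<1$. Its least element is $\mathfrak{m}_\phi$.

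The only real point is the existence of the uniform bound. Using the Tate-algebra membership of the Anderson generating functions from Lemma~\ref{L:AGF}(i) makes it immediate, so I expect no serious obstacle beyond recording $\alpha_\ell>0$.
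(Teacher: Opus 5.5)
Your proposal is correct. It is close to the paper's argument but organized differently.

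The paper invokes \cite[Lem. 5.3]{HJ20}: $\Exp_{\phi}$ is isometric near $0$, with $\Log_{\phi}$ as its inverse there. From this it gets an $i_0$ such that for $i>i_0$ the vectors $\Exp_{\phi}(\dmat_{\phi}[\theta]^{-i}\lambda_{\nu})$ already lie in the polydisc of convergence. It then takes $\mathfrak{m}_{\phi}$ to be the least integer that pushes the finitely many remaining vectors ($1\leq i\leq i_0+1$) inside. Scaling by $\theta^{-\mathfrak{m}_{\phi}}$ keeps the other vectors inside.

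Your first route is the same idea, repackaged as a uniform bound $R$ followed by a well-ordering argument.

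Your second route, via $\mathcal{Y}_{\lambda_{\nu}}(t)\in\mathbb{T}^d$ from Lemma~\ref{L:AGF}(i), is genuinely different. It avoids the isometry lemma entirely and relies instead on the convergence result of \cite{NP21}. It needs only $\alpha_{\ell}>0$. You state this explicitly; the paper leaves it implicit in the claim that the isometric neighborhood lies inside the domain of $\Log_{\phi}$.

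The Tate-algebra viewpoint has a side benefit. The coefficients of $\mathcal{Y}_{\lambda_\nu}(t)$ tend to $0$, so the supremum over $i$ is attained. The same argument therefore gives the paper's second assertion, $\|\theta^{-\mathfrak{m}_{\phi}}\mathcal{Y}_{\nu\ell}\|<\alpha_{\ell}$. That assertion is what the paper actually uses later to place $\theta^{-\mathfrak{m}_{\phi}}\mathcal{Y}_{\nu}(t)$ in the extended domain of $\Log_{\phi}$.
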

\begin{proof}
By \cite[Lem. 5.3]{HJ20}, we know that there exists a subdomain $\mathcal{V}\subset \mathbb{C}_{\infty}^d$ so that $\Exp_{\phi}$ maps $\mathcal{V}$ $|\cdot|$-isometrically to itself. Thus, since $\Log_{\phi}$ is the formal inverse of $\Exp_{\phi}$, there exists a smallest integer $i_0$ so that whenever $i>i_0$, we have
   \[
\left|\Exp_{\phi}\left(\dmat_{\phi}[\theta]^{-i}\lambda_{\nu}\right)\right|=\left| \dmat_{\phi}[\theta]^{-i}\lambda_{\nu} \right|
\]
  for  $1\leq \nu \leq r$ and hence $\Exp_{\phi}\left(\dmat_{\phi}[\theta]^{-i}\lambda_{\nu}\right)$ lies in the domain of convergence of $\Log_{\phi}$.
    If $i_0=0$, then we choose $\mathfrak{m}_{\phi}=0$. If not, then we choose $\mathfrak{m}_{\phi}$ to be the smallest integer such that if we set 
\[
\Exp_{\phi}\left(\dmat_{\phi}[\theta]^{-i}\lambda_{\nu}\right)=(\tilde{z}_1,\dots,\tilde{z}_d)^{\tr},
\]
then for each $1\leq \nu \leq r$ and $1\leq i\leq i_0+1$, we have 
\[
|\tilde{z}_{\ell}|<|\theta|^{\mathfrak{m}_{\phi}}\alpha_{\ell}, \ \ 1\leq \ell \leq d.
\]
This finishes the proof of the first assertion. On the other hand, note that since 
\[
||\theta^{-\mathfrak{m}_{\phi}}\mathcal{Y}_{\nu}(t)||=\max_{i\geq 0}\left| \theta^{-\mathfrak{m}_{\phi}}\Exp_{\phi}\left(\dmat_{\phi}[\theta]^{-i-1}\lambda_{\nu}\right) \right|,
\]
if we let $\theta^{-\mathfrak{m}_{\phi}}\mathcal{Y}_{\nu}(t)=(g_1,\dots,g_d)^{\tr}\in \mathbb{T}^d$, then by the first assertion, we have
\[
||g_{\ell}||<\alpha_{\ell}, \ \ \ \ \ 1\leq \ell \leq d
\]
finishing the proof of the second assertion.


    \end{proof}
Next, we set $\mathcal{P}:=\left( \sum_{i\geq 0}a_{1,i}\tau^i,\dots,\sum_{i\geq 0}a_{d,i}\tau^i\right)\in \Mat_{1\times d}(\mathbb{C}_{\infty}[[\tau]])$ and
for each $1\leq \nu \leq  r$ and $\mathcal{Y}_{\nu}(t)=(\mathcal{Y}_{\nu 1},\dots,\mathcal{Y}_{\nu d})^{\tr}$, define
\[
\mathcal{P}(\mathcal{Y}_{\nu}(t)):=\sum_{1\leq \ell \leq d}\sum_{i\geq 0}a_{\ell,i}\mathcal{Y}_{\nu \ell}^{(i)}\in \mathbb{C}_{\infty}[[t]].
\]
Recall that for $1\leq \mu \leq r$, $\mathfrak{e}_{\mu}\in \Mat_{r\times 1}(\mathbb{F}_q)$ is the $\mu$-th unit vector.
\begin{proposition} \label{P:eqtngen} For each $1\leq \nu \leq  r$, assume that $\mathcal{P}(\mathcal{Y}_{\nu}(t))$ converges in $\mathbb{T}$. Then, we have
    \begin{equation*}
    \left[\mathcal{P}(\mathcal{Y}_1(t)),\dots,\mathcal{P}(\mathcal{Y}_r(t))\right](\Psi^{\tr})^{(-1)}    =\sum_{1\leq \ell \leq d}\sum_{i\geq 0}a_{\ell,i}\sum_{\mu=1}^{r}(\mathcal{B}_{\mu}^{[\ell]})^{(i)}\mathfrak{e}_{\mu}^{\tr}\mathfrak{S}_{i-1}.
\end{equation*}
\end{proposition}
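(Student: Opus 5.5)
Throughout, write $\mathcal{Y}_{\nu\ell}$ for the $\ell$-th entry of $\mathcal{Y}_{\nu}(t)$, and $G_\mu$ for the row vector $[\langle\mathbf{m}_\mu,\mathcal{Y}_1(t)\rangle,\dots,\langle\mathbf{m}_\mu,\mathcal{Y}_r(t)\rangle]$.

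The plan is to reduce the identity to the case of a single monomial $a\tau^i$ in one coordinate $\ell$. Both sides are $\mathbb{C}_\infty$-linear in the coefficients $a_{\ell,i}$. The left side is a convergent sum in $\mathbb{T}$ because each $\mathcal{P}(\mathcal{Y}_\nu(t))$ converges by hypothesis, and right multiplication by the fixed matrix $(\Psi^{\tr})^{(-1)}$ is continuous. It therefore suffices to prove, for each $1\le\ell\le d$ and $i\ge 0$,
\[
[\mathcal{Y}_{1\ell}^{(i)},\dots,\mathcal{Y}_{r\ell}^{(i)}](\Psi^{\tr})^{(-1)}=\sum_{\mu=1}^r(\mathcal{B}_\mu^{[\ell]})^{(i)}\mathfrak{e}_\mu^{\tr}\mathfrak{S}_{i-1}.
\]
The general case then follows by summing over $\ell$ and $i$. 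For this step, note that the left-hand sum converges termwise and the equality holds term by term.

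For the monomial identity, I would start from \eqref{E:0015}, which gives $\mathcal{Y}_{\nu\ell}=\sum_\mu \mathcal{B}_\mu^{[\ell]}\langle\mathbf{m}_\mu,\mathcal{Y}_\nu(t)\rangle$. Applying the $i$-fold twist gives $\mathcal{Y}_{\nu\ell}^{(i)}=\sum_\mu(\mathcal{B}_\mu^{[\ell]})^{(i)}\langle\mathbf{m}_\mu,\mathcal{Y}_\nu(t)\rangle^{(i)}$. It then suffices to show, for each $\mu$ and $i\ge 0$,
\[
G_\mu^{(i)}(\Psi^{\tr})^{(-1)}=\mathfrak{e}_\mu^{\tr}\mathfrak{S}_{i-1}.
\]
Stacking these over $\mu$, this is the matrix statement that $\mathfrak{G}:=(\langle\mathbf{m}_\mu,\mathcal{Y}_\nu(t)\rangle)_{\mu\nu}$ satisfies $\mathfrak{G}^{(i)}(\Psi^{\tr})^{(-1)}=\mathfrak{S}_{i-1}$.

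The base case $i=0$ reads $\mathfrak{G}(\Psi^{\tr})^{(-1)}=\Id_r$. Since $\mathbf{m}=(V^{\tr})^{(-1)}\widetilde{\mathbf{m}}$, we have $\mathfrak{G}=(V^{\tr})^{(-1)}\Upsilon$. Also $\Psi=((\Upsilon^{(1)})^{\tr}V)^{-1}$ gives $\Psi^{\tr}=(V^{\tr}\Upsilon^{(1)})^{-1}$, so $(\Psi^{\tr})^{(-1)}=((V^{\tr})^{(-1)}\Upsilon)^{-1}$. Hence $\mathfrak{G}(\Psi^{\tr})^{(-1)}=\Id_r$.

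For the induction, I would use \eqref{E:007} together with \eqref{E:anotherbasis}: $\tau\mathbf{m}=\Phi^{\tr}\mathbf{m}$ gives $\mathfrak{G}^{(1)}=\Phi^{\tr}\mathfrak{G}$, the entries of $\Phi^{\tr}$ lying in $\mathbb{C}_\infty[t]$ and therefore commuting with the pairing. Iterating, $\mathfrak{G}^{(i)}=(\Phi^{\tr})^{(i-1)}\cdots\Phi^{\tr}\mathfrak{G}$. Multiplying on the right by $(\Psi^{\tr})^{(-1)}$ and using the base case gives $(\Phi^{\tr})^{(i-1)}\cdots(\Phi^{\tr})^{(1)}\Phi^{\tr}$.

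The main obstacle is bookkeeping the indexing of $\mathfrak{S}_n$. As literally defined, $\mathfrak{S}_{i-1}$ has $i$ factors $(\Phi^{\tr})^{(i-1)}\cdots\Phi^{\tr}$, which matches the computation above. I would double-check this against the convention of the isomorphism \eqref{E:isomtmot}: there $\tau^i\mathbf{b}_\ell$ must map to $\sum_\mu(\mathcal{B}^{[\ell]}_\mu)^{(i)}\mathfrak{e}_\mu^{\tr}$ times the matrix of $\tau^i$ on $\mathbf{m}$, namely $(\Phi^{\tr})^{(i-1)}\cdots\Phi^{\tr}$. This confirms that the statement and the computation use consistent indexing. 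I would also carefully verify the twisting convention, namely that $\langle\tau\mathbf{m}_\mu,\mathcal{Y}\rangle=\langle\mathbf{m}_\mu,\mathcal{Y}\rangle^{(1)}$ and that $t$-linear coefficients pass through the pairing.
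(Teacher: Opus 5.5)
Your proposal is correct and follows essentially the same route as the paper: rewrite $\mathcal{Y}_{\nu\ell}$ via \eqref{E:0015}, identify $\mathfrak{G}=(\langle\mathbf{m}_\mu,\mathcal{Y}_\nu(t)\rangle)_{\mu\nu}$ with $(V^{\tr})^{(-1)}\Upsilon=((\Psi^{\tr})^{(-1)})^{-1}$, and use $\mathfrak{G}^{(i)}=\mathfrak{S}_{i-1}\mathfrak{G}$. The only difference is in how you get the recursion $\mathfrak{G}^{(1)}=\Phi^{\tr}\mathfrak{G}$: the paper reads it off directly from $\Psi^{(-1)}=\Phi\Psi$, while you derive it from $\tau\mathbf{m}=\Phi^{\tr}\mathbf{m}$ and \eqref{E:007}, and there the passage of $\mathbb{C}_\infty[t]$-coefficients through the pairing is exactly Lemma \ref{L:AGF}(iii) applied to the periods $\lambda_\nu$.
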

\begin{proof}
Observe that
\begin{multline*}
\left[\mathcal{P}(\mathcal{Y}_1(t)),\dots,\mathcal{P}(\mathcal{Y}_r(t))\right]=\left[\sum_{1\leq \ell \leq d}\sum_{i\geq 0}a_{\ell,i}\mathcal{Y}_{1 \ell}^{(i)},\dots,\sum_{1\leq \ell \leq d}\sum_{i\geq 0}a_{\ell,i}\mathcal{Y}_{r \ell}^{(i)}\right] \\
=\left[\sum_{1\leq \ell \leq d}\sum_{i\geq 0}a_{\ell,i}\left(\sum_{\mu=1}^{r}\mathcal{B}_{\mu}^{[\ell]}\langle\mathbf{m}_\mu,\mathcal{Y}_1(t)\rangle\right)^{(i)},\dots, \sum_{1\leq \ell \leq d}\sum_{i\geq 0}a_{\ell,i}\left(\sum_{\mu=1}^{r}\mathcal{B}_{\mu}^{[\ell]}\langle\mathbf{m}_\mu,\mathcal{Y}_r(t)\rangle\right)^{(i)}\right]\\
=\sum_{1\leq \ell \leq d}\sum_{i\geq 0}a_{\ell,i}\left(\sum_{\mu=1}^{r}\mathcal{B}_{\mu}^{[\ell]}\mathfrak{e}_{\mu}^{\tr}(V^{\tr})^{(-1)}\Upsilon\right)^{(i)}=\sum_{1\leq \ell \leq d}\sum_{i\geq 0}a_{\ell,i}\sum_{\mu=1}^{r}(\mathcal{B}_{\mu}^{[\ell]})^{(i)}\mathfrak{e}_{\mu}^{\tr}((V^{\tr})^{(-1)}\Upsilon)^{(i)}\\
=\sum_{1\leq \ell \leq d}\sum_{i\geq 0}a_{\ell,i}\sum_{\mu=1}^{r}(\mathcal{B}_{\mu}^{[\ell]})^{(i)}\mathfrak{e}_{\mu}^{\tr}\mathfrak{S}_{i-1}(V^{\tr})^{(-1)}\Upsilon
=\sum_{1\leq \ell \leq d}\sum_{i\geq 0}a_{\ell,i}\sum_{\mu=1}^{r}(\mathcal{B}_{\mu}^{[\ell]})^{(i)}\mathfrak{e}_{\mu}^{\tr}\mathfrak{S}_{i-1}((\Psi^{\tr})^{(-1)})^{-1}.
\end{multline*}
Here, the second equality follows from \eqref{E:0015}, the third equality from the definition of $\Upsilon$, the equality $[\bm_1,\dots,\bm_r]^{\tr}=(V^{\tr})^{(-1)}[\widetilde{\bm}_1,\dots,\widetilde{\bm}_r]^{\tr}$ as well as the $\mathbb{C}_{\infty}$-linearity of $\langle\cdot | \cdot\rangle$ in the first entry. Finally, the fifth and sixth equality follow from the fact that 
\[
\Phi^{\tr}((V^{\tr})^{(-1)}\Upsilon)=\Phi^{\tr}((\Psi^{-1})^{\tr})^{(-1)}=(\Psi^{-1})^{\tr}=((V^{\tr})^{(-1)}\Upsilon)^{(1)}.
\]
Thus, we obtain 
\[
    \left[\mathcal{P}(\mathcal{Y}_1(t)),\dots,\mathcal{P}(\mathcal{Y}_r(t))\right](\Psi_{\phi}^{\tr})^{(-1)}    =\sum_{1\leq \ell \leq d}\sum_{i\geq 0}a_{\ell,i}\sum_{\mu=1}^{r}(\mathcal{B}_{\mu}^{[\ell]})^{(i)}\mathfrak{e}_{\mu}^{\tr}\mathfrak{S}_{i-1}
\]
as desired.
\end{proof}
\subsection{The map $\varphi$}\label{S:varphimap}

For convenience, we recall 
    \[
\mathcal{Y}_{\nu}(t)=\sum_{i=0}^{\infty}\Exp_{\phi}\left(\dmat_{\phi}[\theta]^{-(i+1)}\lambda_{\nu}\right)t^i=(\mathcal{Y}_{\nu 1},\dots,\mathcal{Y}_{\nu d})^{\tr}, \ \ \ \ \ \ 1\leq \nu \leq r.
    \] 
We now define
\begin{multline*}
\mathbb{M}_{\phi}:=\{\left(\sum_{n=0}^{\infty}a_{1,n}\tau^n,\dots,\sum_{n=0}^{\infty}a_{d,n}\tau^n\right)\in \Mat_{1\times d}(\mathbb{C}_{\infty}[[\tau]])\ \ | \ \ \text{ for each } 1\leq \ell \leq d\ \ \\
|a_{\ell,n}|||\mathcal{Y}_{\nu \ell}^{(n)}||\to 0 
\text{ as } n\to \infty \}.
\end{multline*}
 Let $\mathcal{G}=\left(\sum_{n=0}^{\infty}a_{1,n}\tau^n,\dots,\sum_{n=0}^{\infty}a_{d,n}\tau^n\right)\in \mathbb{M}_{\phi}$. Thus, by our condition on elements in $\mathbb{M}_{\phi}$, we see that  
\[
\mathfrak{a}_{\nu}:=\sum_{n=0}^{\infty}a_{1,n}\mathcal{Y}_{\nu 1}^{(n)}+\cdots+\sum_{n=0}^{\infty}a_{d,n}\mathcal{Y}_{\nu d}^{(n)}\in \mathbb{C}_{\infty}[[t]]
\]
converges in $\mathbb{T}$ for each $1\leq \nu \leq r$. We also note that $\Log_{\phi}\frac{1}{\theta^{\mathfrak{m}_{\phi}}}\in \mathbb{M}_{\phi}$.
We equip $\mathbb{M}_{\phi}$ with the norm $|\cdot|_{\phi}$ defined by 
\[
|\mathcal{G}|_{\phi}:=\max\{||a_{\ell,n}\mathcal{Y}_{\nu\ell}^{(n)}|| \ \ | \ \ n\geq 0,\ \ \ 1\leq \ell \leq d,  \ \ 1\leq \nu \leq r\}.
\]
Observe that $(\mathbb{M}_{\phi},|\cdot|_{\phi})$ is a normed $\mathbb{C}_{\infty}$-vector space. Furthermore, it is easy to see that the normed $\mathbb{C}_{\infty}$-vector space $(M,|\cdot|_{\phi})$ is dense in $(\mathbb{M}_{\phi},|\cdot|_{\phi})$.

Consider the map
$\varphi_{\phi}:\mathbb{M}_{\phi}\to \Mat_{1\times r}(\mathbb{C}_{\infty}[[t]])$ given by   
\[
\varphi_{\phi}\left(\left(\sum_{n=0}^{\infty}a_{1,n}\tau^n,\dots,\sum_{n=0}^{\infty}a_{d,n}\tau^n\right)\right):=\sum_{1\leq \ell \leq d}\sum_{n=0}^{\infty}a_{\ell,n}\sum_{\mu=1}^{r}(\mathcal{B}_{\mu}^{[\ell]})^{(n)}\mathfrak{e}_{\mu}^{\tr}\mathfrak{S}_{n-1}.
\]

\begin{theorem}\label{T:tenwelinj} The map $\varphi_{\phi}$ is a well-defined map that converges in $(\Mat_{1\times r}(\mathbb{T}),||\cdot||)$. Furthermore, it is continuous and injective.
\end{theorem}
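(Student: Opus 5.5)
The plan is to reduce everything to Proposition \ref{P:eqtngen}: the key identity is that for $\mathcal{G}=(\sum_n a_{1,n}\tau^n,\dots,\sum_n a_{d,n}\tau^n)\in\mathbb{M}_{\phi}$ we expect
\[
\varphi_{\phi}(\mathcal{G})=[\mathfrak{a}_1,\dots,\mathfrak{a}_r](\Psi^{\tr})^{(-1)},
\]
where $\mathfrak{a}_\nu=\sum_{\ell,n}a_{\ell,n}\mathcal{Y}_{\nu\ell}^{(n)}$. First I will establish this at the level of partial sums. For a truncation $\mathcal{G}_N\in M$ (keeping only the terms with $n\le N$), Proposition \ref{P:eqtngen} applies trivially since the sums are finite, giving
\[
\sum_{\ell}\sum_{n\le N}a_{\ell,n}\sum_{\mu}(\mathcal{B}_\mu^{[\ell]})^{(n)}\mathfrak{e}_\mu^{\tr}\mathfrak{S}_{n-1}=[\mathfrak{a}_{1,N},\dots,\mathfrak{a}_{r,N}](\Psi^{\tr})^{(-1)},
\]
with $\mathfrak{a}_{\nu,N}$ the corresponding partial sums of $\mathfrak{a}_\nu$. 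Actually, to control each single term I will apply the same identity to a single monomial $a_{\ell,n}\tau^n$ in the $\ell$-th slot: its image under $\widetilde{\varphi}_\phi$ equals $a_{\ell,n}[\mathcal{Y}_{1\ell}^{(n)},\dots,\mathcal{Y}_{r\ell}^{(n)}](\Psi^{\tr})^{(-1)}$.

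Well-definedness and convergence then follow. Since $(\Psi^{\tr})^{(-1)}$ is a fixed matrix in $\GL_r(\mathbb{T})$, with $C:=\|(\Psi^{\tr})^{(-1)}\|$, the $n$-th term of $\varphi_\phi(\mathcal{G})$ has norm at most $C\max_\nu|a_{\ell,n}|\,\|\mathcal{Y}_{\nu\ell}^{(n)}\|$. This tends to $0$ by the defining condition of $\mathbb{M}_\phi$. Hence the series converges in the Banach space $\Mat_{1\times r}(\mathbb{T})$ by the non-archimedean criterion, and the displayed identity holds in the limit. The same estimate gives $\|\varphi_\phi(\mathcal{G})\|\le C|\mathcal{G}|_\phi$. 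Since $\varphi_\phi$ is clearly $\mathbb{C}_\infty$-linear, it is bounded and therefore continuous.

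Injectivity is the main obstacle. If $\varphi_\phi(\mathcal{G})=0$, invertibility of $(\Psi^{\tr})^{(-1)}$ gives $\mathfrak{a}_\nu=0$ for all $\nu$, i.e.
\[
\sum_{\ell}\sum_n a_{\ell,n}\mathcal{Y}_{\nu\ell}^{(n)}=0\quad\text{for all }\nu.
\]
From this I must deduce that all $a_{\ell,n}=0$. My first approach is to use the residue and pole structure of the twists. By Lemma \ref{L:AGF}(i), $\mathcal{Y}_\nu$ extends meromorphically with poles only at $t=\theta^{q^k}$, and $\mathcal{Y}_\nu^{(n)}$ has poles only at $t=\theta^{q^{k}}$ with $k\ge n$. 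Its principal part at $t=\theta^{q^n}$ is governed by $\lambda_\nu^{(n)}$ via Lemma \ref{L:AGF}(ii), twisted and involving $(\dmat_\phi[\theta]-t)^{-1}$ and the nilpotent part.

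I would then argue inductively on $n$. Suppose all $a_{\ell,n'}$ with $n'<n$ vanish. Examining the principal part at $t=\theta^{q^n}$ of the relation, which requires meromorphic continuation and convergence of the tail on a slightly larger disc, yields the linear system $\sum_\ell a_{\ell,n}\lambda_{\nu\ell}^{(n)}=0$ for all $\nu$, plus higher-order-pole analogues when $N\ne 0$. The $\lambda_\nu$ span $\mathbb{C}_\infty^d$ over $\mathbb{C}_\infty$ (the period lattice has full rank in $\Lie(\phi)$ for uniformizable abelian $\phi$), and applying the twist preserves spanning. Hence $a_{\ell,n}=0$ for all $\ell$.

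The delicate points are justifying the analytic continuation of the infinite sum past $|t|=|\theta|^{q^n}$ and handling higher-order poles when the nilpotent part $N\neq0$. If this becomes too technical, the fallback is a density/closure argument: show that $\varphi_\phi$ is an isometry up to constants on $M$ using $\widetilde{\varphi}_\phi$ being an isomorphism.
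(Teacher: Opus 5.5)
\paragraph{Overall.} Your well-definedness and continuity argument is correct and is essentially the paper's. The paper also gets $\varphi_{\phi}(\mathcal{G})=[\mathfrak{a}_1,\dots,\mathfrak{a}_r](\Psi^{\tr})^{(-1)}$ from Proposition~\ref{P:eqtngen} and bounds $\|\varphi_{\phi}(\mathcal{G})\|\leq |\mathcal{G}|_{\phi}\|(\Psi^{\tr})^{(-1)}\|$; your monomial-by-monomial convergence check is, if anything, more careful. For injectivity you depart from the paper, which never leaves the unit disc. It reads off the $t^i$-coefficients of $\mathfrak{a}_\nu=0$, namely $\sum_n (a_{1,n},\dots,a_{d,n})\Exp_{\phi}(\dmat_{\phi}[\theta]^{-(i+1)}\lambda_\nu)^{(n)}=0$ for all $i$, and uses that $\Exp_\phi$ is isometric near $0$, together with the decay in $i$, to isolate a single dominant term. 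Your pole-based strategy can be made to work, but as written its decisive step is false and another step is left open.

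\paragraph{The false step.} You claim that $\lambda_1,\dots,\lambda_r$ span $\CC_\infty^d$. There are only $r$ of them, and $r<d$ does occur. For $C^{\otimes k}$ with $k\geq 2$ one has $r=1$ and $d=k$, so the residue system $\sum_\ell a_{\ell,n}\lambda_{\nu\ell}^{q^n}=0$ gives $r$ equations in $d$ unknowns and cannot force $a_{\ell,n}=0$.

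What can span is the lattice $\Lambda_\phi=\sum_\nu \dmat_\phi[A]\lambda_\nu$. Since $A$ acts through $\dmat_{\phi}[\theta]=\theta\Id_d+N$, its $\CC_\infty$-span is the span of the vectors $N^j\lambda_\nu$. So the higher-order poles are not an optional add-on. You need either of the following:
\begin{itemize}
\item the whole principal part $\sum_{j}(-1)^j(a_{1,n},\dots,a_{d,n})(N^j\lambda_\nu)^{(n)}(\theta^{q^n}-t)^{-j-1}$;
\item or, more simply, the identity $\mathcal{Y}_{\dmat_\phi[a]\lambda}=a(t)\mathcal{Y}_\lambda$ for $\lambda\in\Lambda_\phi$ (because $\Exp_\phi(\lambda)=0$). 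Since $a(t)\in\F_q[t]$ commutes with twisting, the relation $\sum_{\ell,n}a_{\ell,n}\mathcal{Y}_{\lambda,\ell}^{(n)}=0$ then holds for every $\lambda\in\Lambda_\phi$.
\end{itemize}

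Even then you still need that $\Lambda_\phi$ spans $\Lie(\phi)(\CC_\infty)$ over $\CC_\infty$. This is true for uniformizable abelian $\phi$, but it is not automatic and your parenthetical does not prove it. One argument:
\begin{itemize}
\item $\det\Theta\in\CC_\infty^{\times}(t-\theta)^d$ and $\Upsilon^{(1)}=\Theta\Upsilon$ give $\det\Upsilon\in\CC_\infty^\times\omega_C^d$, which has a pole of exact order $d$ at $t=\theta$.
\item Compare this with the elementary divisors $(t-\theta)^{k_i}$, $\sum k_i=d$, of $\tau M\subset M$. This shows that any $m\in M$ with every $\langle m|\mathcal{Y}_\nu\rangle$ regular at $\theta$ lies in $\tau M$.
\item A constant row vector $c$ with $c\lambda=0$ for all $\lambda\in\Lambda_\phi$ has this property, hence $c=0$.
\end{itemize}

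\paragraph{The open step.} The analytic continuation you flag is genuinely needed, because membership in $\mathbb{M}_\phi$ only controls Gauss norms on $|t|\leq 1$. It can be supplied. Let $i^*$ be the largest index at which the Taylor coefficients of $\mathcal{Y}_{\nu\ell}$ attain their maximal absolute value. Then for fixed $R>1$ and all large $n$, the Gauss norm of $\mathcal{Y}_{\nu\ell}^{(n)}$ on $|t|\leq R$ equals $R^{i^*}\|\mathcal{Y}_{\nu\ell}^{(n)}\|$. So the tail converges uniformly on $|t|\leq R$ for $|\theta|^{q^n}<R<|\theta|^{q^{n+1}}$, and the identity principle applies after clearing the finitely many poles.

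\paragraph{The fallback.} Your fallback does not work: $\widetilde{\varphi}_\phi$ is not bounded below for $|\cdot|_\phi$. For the Carlitz module, $\widetilde{\varphi}_C(C_{\theta^k})=t^k$ has norm $1$. On the other hand, $|C_{\theta^k}|_C\geq \|\mathcal{Y}_1^{(k)}\|=\|\mathcal{Y}_1\|^{q^k}=q^{q^k/(q-1)}\to\infty$.
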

\begin{proof} Let $\mathcal{G}\in \mathbb{M}_{\phi}$ be as above. Since for each $1\leq \nu \leq r$, $\mathfrak{a}_{\nu}$ converges in $\mathbb{T}$, 
by Proposition \ref{P:eqtngen}, we see that
\[
\varphi_{\phi}(\mathcal{G})=\sum_{1\leq \ell \leq d}\sum_{n= 0}^{\infty}a_{\ell,n}\sum_{\mu=1}^{r}(\mathcal{B}_{\mu}^{[\ell]})^{(n)}\mathfrak{e}_{\mu}^{\tr}\mathfrak{S}_{n-1}=\left[\mathfrak{a}_1,\dots,\mathfrak{a}_r\right](\Psi^{\tr})^{(-1)}
\]
and hence $\varphi_{\phi}$ is a well-defined map that converges in $\Mat_{1\times r}(\mathbb{T})$.
On the other hand, we obtain 
\[
||\varphi_{\phi}(\mathcal{G})||\leq ||\left[\mathfrak{a}_1,\dots,\mathfrak{a}_r\right]||||(\Psi^{\tr})^{(-1)}||\leq |\mathcal{G}|_{\phi}||(\Psi^{\tr})^{(-1)}||,
\]
implying that $\varphi_{\phi}$ is a continuous map.

We show that $\varphi_{\phi}$ is injective. Assume to the contrary that $ \mathcal{G}$ is not identically zero and satisfies
\[
\varphi_{\phi}(\mathcal{G})=\left[\mathfrak{a}_1,\dots,\mathfrak{a}_r\right](\Psi^{\tr})^{(-1)}=[0,\dots,0]
\]
    or, using the fact that $\Psi\in \GL_r(\mathbb{T})$, in other words
    \begin{equation}\label{E:inj2}
    \left[\mathfrak{a}_1,\dots,\mathfrak{a}_r\right]=[0,\dots,0].
    \end{equation}
    Hence \eqref{E:inj2} implies that for each $1\leq \nu \leq r$ and $i\geq 0$
\begin{equation}\label{E:sum vanishes}
\sum_{n=0}^{\infty}(a_{1,n},\dots,a_{d,n})\Exp_{\phi}\left(\dmat_{\phi}[\theta]^{-(i+1)}\lambda_\nu\right)^{(n)}=0.  
\end{equation}
On the other hand, since $\Exp_{\phi}$ is an isometric map on some neighborhood of $0$, there exists a positive integer $i_0$ such that $\dmat_{\phi}[\theta]^{-(i+1)}\lambda_\nu$ lies in the domain of convergence of $\Log_{\phi}$  for each $1\leq \mu \leq r$ and  $i\geq i_0$. Thus, we have
    \[
    \left|(a_{1,n},\dots,a_{d,n})\Exp_{\phi}\left(\dmat_{\phi}[\theta]^{-(i+1)}\lambda_\nu\right)^{(n)}\right|=\left|(a_{1,n},\dots,a_{d,n})\left(\dmat_{\phi}[\theta]^{-(i+1)}\lambda_\nu\right)^{(n)}\right|.
   \]
Let us fix such an $i_0$ and assume $i>i_0$. By the calculation in \cite[(51)]{Gre22} (see also \cite[Rem. 3.3.3]{NP21}), we see that for sufficiently large $i$, the norm
\[\left|(a_{1,n},\dots,a_{d,n})\left(\dmat_{\phi}[\theta]^{-(i+1)}\lambda_\nu\right)\right|\]
is dominated by $|\theta|^{-(i+1)}$. Thus, for fixed $0<n_1<n_2$, by multiplying both sides by $|\theta|^{q^{n_1}(i+1)}$ we have that for sufficiently large $i$
\begin{equation}\label{E:normlessthan}
\left|(a_{1,n_1},\dots,a_{d,n_1})\left(\dmat_{\phi}[\theta]^{-(i+1)}\lambda_\nu\right)^{(n_1)}\right|>\left|(a_{1,n_2},\dots,a_{d,n_2})\left(\dmat_{\phi}[\theta]^{-(i+1)}\lambda_\nu\right)^{(n_2)}\right|.
\end{equation}
In particular, if \eqref{E:normlessthan} holds for some $i$, then it also holds for all $m>i$ by the same reasoning. Further, if for some $i_0$ we have that
\[\left|(a_{1,n_1},\dots,a_{d,n_1})\left(\dmat_{\phi}[\theta]^{-(i_0+1)}\lambda_\nu\right)^{(n_1)}\right|=\left|(a_{1,n_2},\dots,a_{d,n_2})\left(\dmat_{\phi}[\theta]^{-(i_0+1)}\lambda_\nu\right)^{(n_2)}\right|,\]
and both are not identically zero, then for $m>i_0$, we have
\begin{equation}\label{E:samenormdecrease}
\left|(a_{1,n_1},\dots,a_{d,n_1})\left(\dmat_{\phi}[\theta]^{-(m+1)}\lambda_\nu\right)^{(n_1)}\right|>\left|(a_{1,n_2},\dots,a_{d,n_2})\left(\dmat_{\phi}[\theta]^{-(m+1)}\lambda_\nu\right)^{(n_2)}\right|.
\end{equation}

Thus, by the non-archimedean property of the norm $|\cdot|$, since \eqref{E:sum vanishes} for $i=0$ converges and equals 0, if any $a_{j,i}\neq 0$, then there must be a finite number of terms of \eqref{E:sum vanishes} which have the same norm. Then, by applying \eqref{E:samenormdecrease} a finite number of times, we conclude that for a sufficiently large $i$ there is a unique term of \eqref{E:sum vanishes} which has the highest norm (in fact, we can take $i$ large enough such that it must be the first non-vanishing term). But by the non-archimedean property of the norm, this would imply that \eqref{E:sum vanishes} has nonzero norm, which is a contradiction. This implies that $a_{\ell,n}=0$ for all $\ell,n$, so $\varphi_\phi$ is injective.

\end{proof}

Before we prove the main result of this section, we define a fundamental map whose construction is due to the second author \cite{Gre24}. Recall that $M\cong \Mat_{1\times d}(\mathbb{C}_{\infty}[\tau])$ and fix $\zz=(z_1,\dots,z_d)^{\tr} \in \mathbb{C}_{\infty}^d$. We define $\delta_{1,\zz}^M:M\to \mathbb{C}_{\infty}$ by 
\begin{equation}\label{D:delta_1 map}
\delta_{1,\zz}^M(m):=m\zz:=m_1\cdot z_1+\dots+m_d \cdot z_d, \ \ m=[m_1,\dots,m_d]\in \Mat_{1\times d}(\mathbb{C}_{\infty}[\tau]).
\end{equation}
 We also define $M_{\zz}$ to be the set of elements $(\mathfrak{c}_1,\dots,\mathfrak{c}_d)$ where, for each $i\in \{1,\dots,d\}$,  $\mathfrak{c}_i=\sum_{j=0}^{\infty}c_{i,j}\tau^j\in \mathbb{C}_{\infty}[[\tau]]$ satisfies 
$
\left(c_{1,\mu}\tau^\mu,\dots,c_{d,\mu}\tau^\mu \right)\zz\to 0 $ as $\mu\to \infty$. Then we extend the map $\delta_{1,\zz}^M$ to $M_{\zz}$ by defining $\delta_{1,\zz}^{M}:M_{\zz}\to \mathbb{C}_{\infty}$ as
\[
\delta_{1,\zz}^M(\tilde{m}):=\lim_{\mu\to \infty}\delta_{1,\zz}([\mathfrak{a}^{\mu}_1,\dots,\mathfrak{a}^{\mu}_d])
\]
where
$
\tilde{m}=\left[\sum_{j=0}^{\infty}a_{1,j}\tau^j,\dots, \sum_{j=0}^{\infty}a_{d,j}\tau^j\right]
$
and 
$\mathfrak{c}_i^{\mu}:=\sum_{j=0}^{\mu}c_{i,j}\tau^j$. Finally, we extend $\delta_{1,\bz}^{M}$ to vectors in $M_\bz^d$ by acting coordinate-wise. We refer the reader to \cite[Def. 2.19]{Gre24} for more details on this extension.

\begin{proof}[Proof of Theorem \ref{T:Intro log formula}] For each $1\leq \ell \leq d$ and $1\leq \nu \leq r$, recall $\mathcal{P}_{\ell}(\mathcal{Y}_{\nu}(t))\in \mathbb{T}$ defined in \eqref{D:mathcal P}. Note, by Proposition \ref{P:eqtngen}, that 
\begin{equation*}
\varphi_{\phi}\left(\sum_{n\geq 0}\frac{1}{\theta^{q^n\mathfrak{m}_{\phi}}}L_{\ell 1}^{[n]}\tau^n,\dots,\sum_{n\geq 0}\frac{1}{\theta^{q^n\mathfrak{m}_{\phi}}}L_{\ell d}^{[n]}\tau^n\right)=\left[\mathcal{P}_{\ell}(\mathcal{Y}_1(t)),\dots,\mathcal{P}_{\ell}(\mathcal{Y}_r(t))\right](\Psi^{\tr})^{(-1)}.
\end{equation*}
Recall that for any $\zz\in \mathbb{C}_{\infty}^d$ lying the domain of convergence of $\Log_{\phi}\theta^{-\mathfrak{m}_{\phi}}$, we set $\mathcal{M}_{\zz}=\delta_{1,\zz}^{M}\circ \varphi_{\phi}^{-1}$. Thus, we have
\begin{equation}\label{E:formulaproof}
\mathfrak{p}_{\ell}(\Log_{\phi}(\theta^{-\mathfrak{m}_{\phi}}\zz))=
\mathcal{M}_{\zz}\left([\mathcal{P}_{\ell}(\mathcal{Y}_1(t)),\dots, \mathcal{P}_{\ell}(\mathcal{Y}_r(t))](\Psi^{\tr})^{(-1)}\right).
\end{equation}
Note, by Lemma \ref{L:integerm}, if $\mathfrak{m}_{\phi}=0$, then $\mathcal{Y}_{\nu}(t)$ lies in the domain of the convergence of $\Log_{\phi}:\mathcal{D}'\to \mathbb{T}^{d}$. Moreover, if for $i\geq 1$, each $\dmat_{\phi}[\theta]^{-i}\lambda_{\nu}$ lies in the domain of the convergence of $\Log_{\phi}:\mathcal{D}\to \mathbb{C}_{\infty}^d$ and if $\mathfrak{t}$ is a tractable coordinate of $\phi$, by Lemma \ref{L:AGF}(i), we then have 
\begin{equation}\label{E:eqtngen2}
\mathfrak{p}_{\mathfrak{t}}(\Log_{\phi}(\mathcal{Y}_{\nu}(t)))=\mathfrak{p}_{\mathfrak{t}}\left(\Log_{\phi}\left(\Exp_{\phi}\left(\dmat_{\phi}[\theta]^{-i}\lambda_{v}\right)\right)\right)=\frac{\lambda_{\nu \mathfrak{t}}}{\theta-t}.
\end{equation} Thus, by \eqref{E:formulaproof} and \eqref{E:eqtngen2}, we obtain
\[
\mathfrak{p}_{\mathfrak{t}}(\Log_{\phi}(\zz))=\mathcal{M}_{\zz}\left(\frac{1}{\theta-t}[\lambda_{1 \mathfrak{t}},\dots, \lambda_{r \mathfrak{t}}](\Psi^{\tr})^{(-1)}\right)
\]
as desired.
\end{proof}

\section{Drinfeld modular forms and dual Goss $L$-functions of Drinfeld modules} \label{S:applications}
In this section, our goal is to apply Theorem \ref{T:Intro log formula} in the case of Drinfeld modules defined over $A$ and relate them to the special values of dual Goss $L$-functions. Later on, we interpret our formulas in terms of modular matrix-valued functions with respect to $\rho^{*}$ described in \S\ref{S:intro1.2} and prove Theorem \ref{T:Intromodform}.
\subsection{Dual Goss $L$-function of Drinfeld modules} Let $\phi$ be the Drinfeld module of rank $r$ given by 
\begin{equation}\label{E:drinmod}
\phi_{\theta}=\theta+k_1\tau+\dots+k_r\tau^r\in \mathbb{C}_{\infty}[\tau].
\end{equation}
We also let $\{\lambda_1,\dots,\lambda_r\}$ be a fixed generating set for the period lattice of $\phi$. 

Our next theorem generalizes \cite[Cor. 1.3]{GG25} to Drinfeld modules of arbitrary rank defined over $A$.
\begin{theorem}\label{T:modformslvalues} Let $\phi$ be a Drinfeld module of rank $r$ defined over $A$.
\begin{itemize}
\item[(i)] There exist an element $\mathfrak{f}_{\phi}\in K_{\infty}^{\times}$, a non-negative integer $N(\phi)$ and an element $\mathfrak{c}_{\phi}\in K^{\times}$ such that setting $\zz:=\theta^{\mathfrak{m}_{\phi}}\Exp_{\phi}(\mathfrak{f}_{\phi}/\theta^{N(\phi)})$, we have
\[
L(\phi^{\vee},0)=\mathfrak{c}_{\phi}\mathcal{M}_{\zz}\left(
\left[\Log_{\phi}\left(\frac{f_{\lambda_1}(t)}{\theta^{m_{\phi}}}\right),\dots,\Log_{\phi}\left(\frac{f_{\lambda_r}(t)}{\theta^{m_{\phi}}}\right)\right]
(\Psi_{\phi}^{\tr})^{(-1)}\right).
\]
\item[(ii)] Let $r=2$ and assume that for $i=1,2$, we have $\deg_{\theta}(k_i)<q^i$. Then $\zz=1$ and
\[
L(\phi^{\vee},0)=\mathfrak{c}_{\phi}\mathcal{M}_{\zz}\left(\frac{1}{\theta-t}
\left[\lambda_1,\lambda_2\right]
(\Psi_{\phi}^{\tr})^{(-1)}\right).
\]\end{itemize}
\end{theorem}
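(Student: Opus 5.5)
The plan is to combine a known class-number / log-algebraicity formula for $L(\phi^{\vee},0)$ with Theorem \ref{T:Intro log formula} in the one-dimensional case $d=1$. For a Drinfeld module of rank $r$, the unique coordinate is tractable.

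The first input is a formula expressing $L(\phi^{\vee},0)$ as a logarithm value. For Drinfeld modules defined over $A$, Taelman's class formula gives $L(\phi^{\vee},0)$ (or the Goss $L$-value $L(\phi,1)$ related to it) as the product of a regulator and the order of a class module. Through the Gekeler/Goss duality of local factors, this can be rewritten as $L(\phi^{\vee},0)=\mathfrak{c}\,\Log_\phi(\zz_0)$ for a suitable algebraic point. More precisely, since the unit module $\Exp_\phi^{-1}(A)$ has rank one over $A$ in the $d=1$ case, the regulator is a single generator $\mathfrak{f}_\phi\in K_\infty^\times$ with $\Exp_\phi(\mathfrak{f}_\phi)\in A$. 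I would use the relevant results of Chang--El-Guindy--Papanikolas \cite{ChangEl-GuindyPapanikolas} (already invoked for convergence) or \cite{GG25} to obtain
\[
L(\phi^{\vee},0)=\mathfrak{c}'\,\mathfrak{f}_\phi, \qquad \mathfrak{c}'\in K^\times .
\]
To land in the domain of convergence of $\Log_\phi$, I would replace $\mathfrak{f}_\phi$ by $\mathfrak{f}_\phi/\theta^{N(\phi)}$, choosing $N(\phi)$ so that this lies in the disk on which $\Exp_\phi$ is an isometry with inverse $\Log_\phi$ (cf. the proof of Lemma \ref{L:integerm}). Then
\[
\mathfrak{f}_\phi/\theta^{N(\phi)}=\Log_\phi\bigl(\Exp_\phi(\mathfrak{f}_\phi/\theta^{N(\phi)})\bigr)=\Log_\phi(\theta^{-\mathfrak{m}_\phi}\zz)
\]
with $\zz:=\theta^{\mathfrak{m}_\phi}\Exp_\phi(\mathfrak{f}_\phi/\theta^{N(\phi)})$. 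This gives $L(\phi^{\vee},0)=\mathfrak{c}'\theta^{N(\phi)}\Log_\phi(\theta^{-\mathfrak{m}_\phi}\zz)$, and we set $\mathfrak{c}_\phi:=\mathfrak{c}'\theta^{N(\phi)}$.

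Next I would apply the first formula of Theorem \ref{T:Intro log formula} with $d=\ell=1$. When $d=1$ we have $\mathcal{P}_1(\mathcal{Y}_\nu)=\sum_n L_n\theta^{-q^n\mathfrak{m}_\phi}f_{\lambda_\nu}^{(n)}$. Because the coefficients $L_n$ are constants, this equals $\Log_\phi(f_{\lambda_\nu}(t)/\theta^{\mathfrak{m}_\phi})$ for the extended logarithm \eqref{E:extension}, and it converges by Lemma \ref{L:integerm}. Substituting into \eqref{E:formula} gives part (i) directly, with $\Psi=\Psi_\phi$ from \eqref{E:RAT2}.

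For part (ii), the degree bounds $\deg k_i<q^i$ should be used in two ways. The first is to show $\mathfrak{m}_\phi=0$ and that every $\lambda_\nu/\theta^i$, $i\ge1$, lies in the domain of convergence of $\Log_\phi$; this is checked with the Newton polygon of $\phi_\theta$, which bounds $|\lambda_\nu|$ against the radius of convergence of $\Log_\phi$. The second is to show that one can take $N(\phi)=0$ and $\Exp_\phi(\mathfrak{f}_\phi)=1$. In this range the unit module is expected to be generated by $\Log_\phi(1)$, as in the analysis of \cite{GG25}, so that $\zz=1$. Given these facts, the second formula of Theorem \ref{T:Intro log formula}, with tractable coordinate $\mathfrak{t}=1$, replaces $\Log_\phi(f_{\lambda_\nu})$ by $\lambda_\nu/(\theta-t)$ and yields (ii).

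The main obstacle will be the first step: identifying $L(\phi^{\vee},0)$ with a $K^\times$-multiple of a single logarithm value. This requires the class formula, the rank-one nature of the unit module, and control of the dual Euler factors. A secondary difficulty is the norm estimate in (ii) showing that $1$ lies inside the convergence disk of $\Log_\phi$ and generates the units.
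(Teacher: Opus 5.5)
Your part (i) is the paper's argument. The precise input is Taelman's class formula $L(\phi/A)=\mathfrak c_1\mathfrak f_\phi$ \cite[Thm.~1]{Tae12}, with $\mathfrak f_\phi$ the monic generator of $U(\phi/A)=\{x\in K_\infty:\Exp_\phi(x)\in A\}$. Note the intersection with $K_\infty$: $\Exp_\phi^{-1}(A)\subset\mathbb{C}_\infty$ contains all of $\Lambda_\phi$ and is not of rank one. Taelman's \cite[Rem.~5]{Tae12} then identifies $L(\phi/A)$ with $L(\phi^\vee,0)$ up to the finitely many bad Euler factors $\prod_v P_v^\vee(1)^{-1}$. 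The rescaling by $\theta^{N(\phi)}$ and the application of Theorem~\ref{T:Intro log formula} with $d=1$, where $\mathcal P_1(\mathcal Y_\nu)=\Log_\phi(f_{\lambda_\nu}/\theta^{\mathfrak m_\phi})$, are exactly as in the paper.

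In (ii) your route genuinely differs. The paper imports two results:
\begin{itemize}
\item the identity $L(\phi^\vee,0)=\Log_\phi(1)$ from \cite[Thm.~1.7.5]{Luc25}, which gives $N(\phi)=0$ and $\mathfrak f_\phi=\Log_\phi(1)$, and lets one take $\mathfrak c_\phi=1$;
\item \cite[Thms.~4.4, 5.2]{KP23}, which under $\deg_\theta k_i<q^i$ allow the choice $\lambda_i=\theta\Log_\phi(\xi_i)$ with $\xi_i$ nonzero $\theta$-torsion points. This gives $\mathfrak m_\phi=0$ and the hypotheses of the second half of Theorem~\ref{T:Intro log formula}.
\end{itemize}
You instead derive what is needed directly from the degree bounds. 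This is self-contained. The price is that $\mathfrak c_\phi$ remains an inexplicit product of Taelman's class-module and bad-reduction constants. That is harmless, because (ii) only asserts that some $\mathfrak c_\phi\in K^\times$ works.

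Both of your sketched steps are correct and short, so write them out instead of leaving them as expectations.

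\emph{The unit step.} The recursions $\alpha_n(\theta^{q^n}-\theta)=\sum_i k_i\alpha_{n-i}^{q^i}$ and $\beta_n(\theta-\theta^{q^n})=\sum_i\beta_{n-i}k_i^{q^{n-i}}$, together with $|k_i|\le q^{q^i-1}$, give $|\alpha_n|\le q^{-1}$ and $|\beta_n|\le\max_i|\beta_{n-i}|q^{-q^{n-i}}$ for $n\ge1$. Hence $\Exp_\phi$ is an isometry on the closed unit disc, and $\Log_\phi(1)\in K_\infty$ converges with $|\Log_\phi(1)|=1$. It follows that $U(\phi/A)\cap\{|x|\le1\}=\mathbb F_q\Log_\phi(1)$. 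Since a generator of the rank-one module $U(\phi/A)$ has minimal norm, $\Log_\phi(1)$ generates $U(\phi/A)$. In fact $\Log_\phi(1)\in U(\phi/A)\setminus\{0\}$ already suffices, since the index is absorbed into $\mathfrak c_\phi$.

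\emph{The period step.} What you need is that every nonzero $\theta$-torsion point lies in the disc of convergence of $\Log_\phi$. Equivalently, a reduced basis $\mu_1,\mu_2$ of $\Lambda_\phi$ must satisfy $|\mu_2|<q|\mu_1|$. Consider the Newton polygon of $\theta+k_1x^{q-1}+k_2x^{q^2-1}$ with $\deg k_1\le q-1$ and $\deg k_2\le q^2-1$. It shows that the norms of nonzero $\theta$-torsion points take at most two values, whose ratio is at most $q^{1-1/(q^2-q)}<q$, and this gives exactly the needed inequality.

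\emph{Choice of basis.} You must then take $\lambda_\nu=\mu_\nu$, because the right-hand side of (ii) is not basis independent. Replacing $[\lambda_1,\lambda_2]$ by $[\lambda_1,\lambda_2]\gamma$ with $\gamma\in\GL_2(A)$ replaces $(\Psi_\phi^{\tr})^{(-1)}$ by $\overline{\gamma}^{-1}(\Psi_\phi^{\tr})^{(-1)}$, and $\gamma\overline{\gamma}^{-1}\neq\Id_2$ in general.
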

\begin{proof} We prove the first assertion. Let $L(\phi/A)$ be the Taelman $L$-value corresponding to $\phi$. Then by \cite[Thm. 1]{Tae12}, we know that there exist $\mathfrak{f}_{\phi}\in K_{\infty}^{\times}$ and $\mathfrak{c}_1\in A\setminus \{0\}$ such that 
\[
L(\phi/A)=\mathfrak{c}_1\mathfrak{f}_{\phi}.
\]    
We remind the reader that here, $\mathfrak{c}_1$ is the monic generator of the \textit{class module $H(\phi/A)$ of $\phi$} in the sense of Taelman  and $\mathfrak{f}_{\phi}$ is the monic generator of the $A$-module $U(\phi/A)$, so called \textit{the unit module of $\phi$}, consisting of all the elements $x\in K_{\infty}$ satisfying $\Exp_{\phi}(x)\in A$. Moreover, by \cite[Rem. 5]{Tae12}, we know that $L(\phi^{\vee},0)$ differs from $L(\phi/A)$ by a constant in $K^{\times}$. More precisely, this constant, say $\mathfrak{c}_2\in K^{\times}$, is equal to the finite product $ \prod_{v} P_v^{\vee}(1)\inv $, where $v$ runs over all the monic irreducible polynomials in $A$ where $\phi$ has \textit{bad reduction}. Thus we write
\[
L(\phi^{\vee},0)=\mathfrak{c}_1\mathfrak{c}_2\mathfrak{f}_{\phi}.
\]
 On the other hand, since $\Exp_{\phi}$ is a locally isometric map near $0$ with its inverse $\Log_{\phi}$, there exists $N(\phi)\in \mathbb{Z}_{\geq 0}$ such that $\mathfrak{f}_{\phi}/\theta^{N(\phi)}$ lies in the domain of convergence of $\Log_{\phi}$. Therefore, we write
\[
\mathfrak{f}_{\phi}=\theta^{N(\phi)}\Log_{\phi}\left(\Exp_{\phi}\left(\frac{\mathfrak{f}_{\phi}}{\theta^{N(\phi)}}\right)\right).
\]
Now letting $\mathfrak{c}_{\phi}:=\mathfrak{c}_1\mathfrak{c}_2\theta^{N(\phi)}$ and $\zz=\theta^{\mathfrak{m}_{\phi}}\Exp_{\phi}\left(\frac{\mathfrak{f}_{\phi}}{\theta^{N(\phi)}}\right)$ as well as using Theorem \ref{T:Intro log formula}, we obtain 
\begin{multline*}
L(\phi^{\vee},0)=\mathfrak{c}_1\mathfrak{c}_2\mathfrak{f}_{\phi}=\mathfrak{c}_1\mathfrak{c}_2\theta^{N(\phi)} \Log_{\phi}\left(\frac{1}{\theta^{\mathfrak{m}_{\phi}}}\theta^{\mathfrak{m}_{\phi}}\Exp_{\phi}\left(\frac{\mathfrak{f}_{\phi}}{\theta^{N(\phi)}}\right)\right)\\
=\mathfrak{c}_{\phi}\mathcal{M}_{\zz}\left(
\left[\Log_{\phi}\left(\frac{f_{\lambda_1}(t)}{\theta^{m_{\phi}}}\right),\dots,\Log_{\phi}\left(\frac{f_{\lambda_r}(t)}{\theta^{m_{\phi}}}\right)\right]
(\Psi_{\phi}^{\tr})^{(-1)}\right)
\end{multline*}
as desired. We now prove the second assertion. By \cite[Thm. 1.7.5]{Luc25}, we know that
\begin{equation}\label{E:lfuncformula}
L(\phi^{\vee},0)=\log_{\phi}(1).
    \end{equation}
    That is $N(\phi)=0$ and $\mathfrak{f}_{\phi}=\log_{\phi}(1)$.
  Moreover $\mathfrak{m}_{\phi}=0$. Indeed, by \cite[Thm. 4.4]{KP23}, there exists a positive integer $\mathfrak{N}$ such that, for $i=1,2$, one can choose $\lambda_i=\theta^{\mathfrak{N}}\Log_{\phi}(\xi_i)$ for some $\xi_i\in \mathbb{C}_{\infty}^{\times}$. By our condition on $k_1$ and $k_2$ as well as \cite[Thm. 5.2]{KP23}, we have $\mathfrak{N}=1$. Thus, we can choose $\lambda_i=\theta\Log_{\phi}(\xi_i)$. Since $\Exp_{\phi}$ is an isometric map on the domain of convergence of $\Log_{\phi}$, we see that 
    $
    \Exp_{\phi}(\theta^{-j-1}\lambda_j)=\Exp_{\phi}(\theta^{-j}\Log_{\phi}(\xi_j))$ lies in the domain of convergence of $\Log_{\phi}$ for all $j\geq 0$, implying that $\mathfrak{m}_{\phi}=0$. Thus, $\zz=1$ and by the last part of Theorem \ref{T:Intro log formula}, we obtain the second assertion.
    \end{proof}

\subsection{Drinfeld modular forms of arbitrary rank and modular matrix-valued functions}\label{S:DMF}
In this section we apply our formulas of the previous sections to Drinfeld modular forms of arbitrary rank. This enables us to give a connection between certain values of modular matrix-valued functions and values of Goss $L$-functions connected to Drinfeld modules.

We begin recounting the theory of Drinfeld modular forms in some generality, then we specify to the situation we consider in our formulas. For more details on Drinfeld modular forms of arbitrary rank, we refer the reader to \cite{BB17,BBP18,Gek17,Gek25}.

When $r=1$, we set $\mathbb{H}^1:=\{1\}$ and for $r\geq 2$, we recall the Drinfeld upper half plane $\mathbb{H}^r$ given by
\[
\mathbb{H}^r=\mathbb{P}^{r-1}(\CC_{\infty})\setminus \{K_{\infty}\text{-rational hyperplanes}\}
\]
so that any element $\ww=(w_1,\dots,w_r)^{\tr}\in \mathbb{H}^r$ is given by $K_{\infty}$-linearly independent entries and normalized so that $w_r=1$. Let $b_1,\dots,b_r\in K_{\infty}$. We define 
\[
\ell_{b_1,\dots,b_r}(\ww):=b_1w_1+\dots+b_rw_r
\]
and let $|\ww|_{\infty}:=\max\{|w_1|,\dots,|w_r|\}$.  
    For any $n\in \mathbb{Z}_{\geq 1}$, we consider \[
    \mathbb{H}_{n}^r:=\{\ww\in \mathbb{H}^r \ \ | \ \ |\ell_{b_1,\dots,b_r}(\ww)|\geq q^{-n}|\ww|_{\infty}, \ \ b_1,\dots,b_r\in K_{\infty}\} \subset \mathbb{H}^r.
    \]
   We remark that $\{\mathbb{H}_{n}^r\}_{n=1}^{\infty}$ is an admissible covering of $\mathbb{H}^r$ \cite[\S3]{BBP18} and hence $\mathbb{H}^r$ is equipped with a rigid analytic space structure. We further call $f:\mathbb{H}^r\to \mathbb{T}$ \textit{a rigid analytic function} if its restriction to each $\mathbb{H}_n^r$ is the uniform limit of rational functions in $\mathbb{T}(w_1,\dots,w_{r-1})$ with no pole in $\mathbb{H}_n^r$. Furthermore, we call $f:\mathbb{H}^r\to \Mat_{m\times n}(\mathbb{T})$ \textit{a rigid analytic matrix-valued function} if its each entry is a rigid analytic function.

For any $\gamma=(a_{ij})\in \GL_r(K_\infty)$, define
\[
\gamma \cdot \ww:=\Big(\frac{a_{11}w_1+\dots+ a_{1r}w_r}{a_{r1}w_1+\dots+ a_{rr}w_r},\dots,\frac{a_{(r-1)1}w_1+\dots +a_{(r-1)r}w_r}{a_{r1}w_1+\dots+ a_{rr}w_r},1\Big)^{\tr}\in \mathbb{H}^r.
\]
We further set
\[
j(\gamma,\ww)=a_{r1}w_1+\dots+ a_{rr}w_r\in \CC_{\infty}^{\times}.
\]
For any $\ww=(w_1,w_2,\dots,w_r)^{\tr}\in \mathbb{H}^r$, we let $\widetilde{\ww}:=(w_2,\dots,w_r)^{\tr}\in \mathbb{H}^{r-1}$ and consider
\[
u(\ww):=\frac{1}{\tilde{\pi}\Exp_{\phi(\widetilde{\ww})}(w_1)}.
\]
Here, by $\Exp_{\phi(\widetilde{\ww})}$, we mean the exponential function of the Drinfeld module $\phi(\widetilde{\ww})$ of rank $r-1$ given by 
\[(\phi(\widetilde{\ww}))_{\theta} := \theta +\widetilde{g}_1(\widetilde{\ww})\tau + \cdots+\widetilde{g}_r(\widetilde{\ww})\tau^{r-1}\]
which corresponds to the $A$-lattice of rank $r-1$ generated by the $K_{\infty}$-linearly independent elements $w_2,\dots,w_r$.

In what follows, we define 
\[
|\ww|_{\text{im}}:=\mathrm{inf}\{\inorm{w_1-\mathfrak{a}}:\mathfrak{a}=a_2w_2+\dots+a_rw_r, \ \  a_2,\dots,a_r\in K_{\infty}\}
\]
and following the notation in \cite[\S5.1]{CG22}, for $\widetilde{\ww}\in \mathbb{H}^{r-1}$, we set 
\[
\mathfrak{I}_{\widetilde{\ww}}:=\{ \ww=(w_1,\dots,w_r)^{\tr}=:(w_1,\widetilde{\ww})\in \mathbb{H}^{r} \ \ | \ \ \inorm{w_1}=|\ww|_{\text{im}} \}.
\]

\begin{definition}\label{D:Modular forms def}
We call an analytic function $f:\HH^{r} \to \mathbb{T}$  \textit{modular-like of weight $\ell\in \mathbb{Z}$ and type $m\in \Z/(q-1)\Z$} if for any $\gamma  \in \GL_r(A)$, we have
\[f(\gamma\cdot \ww) = j(\gamma,\ww)^{\ell}\det(\gamma)^{-m} f(\ww).\]
 We say that a modular-like function $f$ is
\begin{enumerate}
\item \textit{weakly modular} if, for any choice of $\widetilde{\ww}\in \mathbb{H}^{r-1}$, there exists a non-negative integer $M$ such that 
\[
\lim_{\substack{\ww=(w_1,\widetilde{\ww})\in \mathfrak{I}_{\widetilde{\ww}}\\ |\ww|_{\infty}\to\infty}}u(\ww)^Mf(\ww)<\infty,
\]
\item \textit{a modular form} if 
\[
\lim_{\substack{\ww=(w_1,\widetilde{\ww})\in \mathfrak{I}_{\widetilde{\ww}}\\ |\ww|_{\infty}\to\infty}}f(\ww)<\infty,
\]
\item \textit{a cusp form} if 
\[
\lim_{\substack{\ww=(w_1,\widetilde{\ww})\in \mathfrak{I}_{\widetilde{\ww}}\\ |\ww|_{\infty}\to\infty}}f(\ww)=0.
\]
\end{enumerate}
\end{definition}

By \cite[Cor.3.6]{GU25}, we know that modular forms in our context are closely related to $\mathbb{C}_{\infty}$-valued Drinfeld modular forms. Hence, every modular form $f(\ww)$ has an expansion as a power series in $u(\ww)$.  More precisely, in some neighborhood of infinity, one can uniquely determine $f$ via the power series
\[
f(\ww) = \sum_{i=0}^\infty f_i(\widetilde{\ww}) u(\ww)^i,
\]
which is called \textit{the $u$-expansion of $f$}. Here each $f_i:\mathbb{H}^{r-1}\to \mathbb{T}$ is a uniquely determined rigid analytic function. We note that when $r=2$, each $f_i$ is a constant in $\mathbb{T}$. This is akin to a Fourier expansion, with the key difference that we lack a fundamental way to compute the $u$-expansion coefficients of $f$.
We wish to generalize the above definition to encompass matrix-valued deformations of modular forms. Our definition captures the notion of vector-valued modular forms defined by Pellarin and Pellarin and Perkins in the rank two setting for a particular representation (see \cite{Pel25,PP18}).

Recall from \S\ref{S:Intro1.3} the identity representation that sends $\gamma\to \overline \gamma$ and the representation $\rho^{*}$ that sends $\gamma \to (\overline \gamma^{\tr})^{-1}$.
Inspired by the work of Pellarin \cite{Pel12} and Pellarin and Perkins \cite{PP18}, we introduce the following rigid analytic functions.
\begin{definition}\label{D:matrix modular type}
\begin{itemize}
\item[(i)] A rigid analytic matrix-valued function $f:\mathbb{H}^r \to \Mat_{r}(\TT)$ is \textit{a modular-like matrix-valued function  with weights $(\ell_1,\dots,\ell_r) \in \Z^r$ and type $~m\in \Z/(q-1)\Z$ with respect to $\rho^{*}$} if for all $\gamma\in \GL_r(A)$, we have
\[f(\gamma\cdot \ww) =\det(\gamma)^{-m} \rho^{*}(\gamma) f(\ww) \begin{pmatrix}
    j(\gamma,\ww)^{\ell_1}& & & \\
     & \ddots & &  \\
      & & & j(\gamma,\ww)^{\ell_r}
      \end{pmatrix}.\]
\item[(ii)]  We say that a modular-like matrix valued function with weights $(\ell_1,\cdots,\ell_r) \in \Z^r$ and type $m\in \Z/(q-1)\Z$ with respect to $\rho^{*}$ is \textit{modular} if, for any $\widetilde{\ww}\in \mathbb{H}^{r-1}$, we have
\[
\begin{pmatrix}
    1& & & & \\
     & u(\ww) & & & \\
      & & & \ddots& \\
      & & & & u(\ww)
      \end{pmatrix}f(\ww)\to \Mat_{r}(\mathbf{0})
\]
as $|\ww|_{\infty}\to \infty$ where $\ww=(w_1,\widetilde{\ww}) \in \mathfrak{I}_{\widetilde{\ww}}$.
\end{itemize}
\end{definition}
We comment that, in the rank two case, if $f$ is a modular matrix valued function, then the each column of $f$ is necessarily 
\textit{a $\mathbb{T}$-valued vectorial modular form of weight $\ell_i$ and type $m$} in the sense of \cite[Def. 3.4]{PP18}.

\begin{definition}
If a function $f:\mathbb{H}^r \to \Mat_{r}(\TT)$ is such that $f\twistk{k}$ is a modular  matrix valued function for some value $k\in \Z_+$ with $k$ minimal of weights $(q^k\ell_1,\dots,q^k\ell_r)$ and type $m$, then we say that $f$ is \textit{a modular matrix valued function with weights $(\ell_1,\dots,\ell_r)$, type $m$ and root $q^k$}.
\end{definition}

Let $\ww\in \HH^r$ and $\Lambda_{\ww}=Aw_1+\cdots+Aw_r$  be an $A$-lattice of rank $r$ in $\C_\infty$ generated by $w_1,\dots,w_r$. One can consider a Drinfeld module $\phi(\ww)$ of rank $r$ corresponding to $\Lambda_{\ww}$ so that it is given by 
\[
(\phi(\ww))_{\theta} := \theta + g_1(\ww)\tau + \cdots+g_r(\ww)\tau^r.
\]
If we vary $\ww\in \mathbb{H}^r$ above, by \cite[\S15]{BBP18}, we see that $g_i:\mathbb{H}^r\to \mathbb{C}_{\infty}$ is a modular form of weight $q^i-1$ and type $0$ for $1\leq i \leq r$. In addition, the rigid analytic function $g_r:\mathbb{H}^r\to \mathbb{C}_{\infty}$ is a cusp form of weight $q^r-1$ and type $0$. Furthermore, in \cite{Gek17}, Gekeler defined a non-zero type Drinfeld modular form $h_r$ (when $r=1$, it is set to be $h_1:=-1$) so that 
\begin{equation}\label{E:hfunction}
g_r(\ww)=(-1)^{r-1}h_r(\ww)^{q-1}.
\end{equation}
More precisely, $h_r$ is a cusp form of weight $q^r-1/(q-1)$ and type $1$. 

With these definitions in place, we revisit the constructions of \S\ref{S:Drinfeld} 
from the viewpoint of Drinfeld modular forms. For each $1\leq i \leq r$, we denote by $f_i(\cdot,t):\mathbb{H}^r\to \mathbb{T}$ the rigid analytic function that sends each $\bw\in \mathbb{H}^r$ to the Anderson generating function of $\phi(\ww)$ with respect to $w_i$. Moreover, consider matrix valued rigid analytic function $\Upsilon:\mathbb{H}^r\to \GL_{r}(\mathbb{T})$ defined by (note that in \cite{Pel14} it is called $\hat{\Psi}$ in the rank two case)
\[\Upsilon(\ww) := \begin{pmatrix} f_{1}(\ww,t) & \cdots & \cdots & f_{r}(\ww,t)\\
f_{1}(\ww,t)^{(1)} & \cdots & \cdots & f_{r}(\ww,t)^{(1)}\\
\vdots  & & & \vdots\\
\vdots  & & & \vdots\\
f_{1}(\ww,t)^{(r-1)} & \cdots & \cdots &f_{r}(\ww,t)^{(r-1)}
\end{pmatrix}.\]
In the proof of \cite[Thm. 5.5]{CG22} (see also \cite[Lem. 2.4]{Pel14} in the rank two case), it is shown that for any $\gamma \in \GL_r(A)$, we have
\begin{equation}\label{E:Upsilon transform}
\Upsilon(\gamma\cdot \ww)=\begin{pmatrix}
    j(\gamma,\ww)^{-1}& & & \\
     & \ddots & &  \\
      & & & j(\gamma,\ww)^{-q^{r-1}}
      \end{pmatrix}\Upsilon(\ww) \overline \gamma^{\tr}.
\end{equation}
Further, by \cite[Prop. 3.4]{CG22} (see also \cite[Lem. 2.3]{Pel14} in the rank two case), we have 
\begin{equation}\label{E:determinant}
\det(\Upsilon(\ww))=\frac{\omega_C}{h_r(\ww)}.
\end{equation}
Here, for clarity, comparing to \cite[Prop. 3.4]{CG22}, the factor of $\tilde{\pi}$ is avoided due to our normalization in \eqref{E:hfunction} (compare with \cite[(2.11)]{CG22}).
 
Next, we consider the rigid analytic function function $V:\mathbb{H}^r\to \GL_r(\mathbb{C}_{\infty})$ defined by
\[
V(\ww):=\begin{pmatrix}
g_1(\ww)&g_2(\ww)^{(-1)} &g_3(\ww)^{(-2)}& \dots &g_r(\ww)^{(1-r)}\\
\vdots &\vdots&\vdots&  \iddots & \\
\vdots & \vdots& g_r(\ww)^{(-2)} & & \\
\vdots &g_r(\ww)^{(-1)} & & & \\
g_r(\ww) &  & & & 
\end{pmatrix}, \ \ \ww\in \mathbb{H}^r.
\]
Following \S\ref{S:Drinfeld}, we then define the function $\Psi:\mathbb{H}^r\to \GL_r(\mathbb{T})$ given by 
\begin{equation}\label{E:ratmod}
\Psi(\ww) := ((\Upsilon(\ww)\twist)^{\tr} V(\ww))\inv.
\end{equation} 
It is clear that $\Psi$ is a matrix valued rigid analytic function. From the transformation property of $\Upsilon(\ww)$, we deduce the following.

\begin{proposition}\label{P:modlike}
The matrix valued function $\ww\to ((\Psi(\ww)^{\tr})^{(-1)}$ is  modular of weights $(1/q,\dots,1/q^r)$, type $0$ and root $q^r$ with respect to $\rho^{*}$. In other words, for all $\gamma\in \GL_r(A)$,  it satisfies
\begin{equation}\label{E:weakmod0}
((\Psi(\gamma \cdot \ww)^{\tr})^{(r-1)}=\rho^{*}(\gamma)(\Psi(\ww)^{\tr})^{(r-1)}\begin{pmatrix}
    j(\gamma,\ww)^{q^{r-1}}& & & & \\
     & \ddots & & & \\
      & & & j(\gamma,\ww)^{q}& \\
      & & & & j(\gamma,\ww)
      \end{pmatrix}.
      \end{equation}
Moreover, we have 
\[\det(\Psi(\ww)^{(r-1)}) = (t-\theta^{q^{r-1}})^{-1}\cdots (t-\theta^q)^{-1} h_r(\ww) \Omega(t).\]
\end{proposition}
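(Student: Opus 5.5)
The plan is to derive everything from the transformation law \eqref{E:Upsilon transform} for $\Upsilon$, the modularity of the coefficient forms $g_i$, and the determinant formula \eqref{E:determinant}. The growth condition of Definition~\ref{D:matrix modular type}(ii) is handled separately at the end.

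\textbf{Step 1: rewrite $(\Psi^{\tr})^{(r-1)}$.} Transposing \eqref{E:ratmod} gives
\[
\Psi(\ww)^{\tr}=(\Upsilon(\ww)^{(1)})^{-1}(V(\ww)^{\tr})^{-1}.
\]
Applying the $(r-1)$-fold twist, we get
\[
(\Psi(\ww)^{\tr})^{(r-1)}=(\Upsilon(\ww)^{(r)})^{-1}\bigl((V(\ww)^{\tr})^{(r-1)}\bigr)^{-1}.
\]
We treat the two factors separately.

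\textbf{Step 2: transform the $\Upsilon$ factor.} The entries of $\overline\gamma$ lie in $\F_q[t]$, so $\overline\gamma$ is fixed by twisting. Twisting \eqref{E:Upsilon transform} $r$ times and inverting gives
\[
(\Upsilon(\gamma\cdot\ww)^{(r)})^{-1}=\rho^{*}(\gamma)\,(\Upsilon(\ww)^{(r)})^{-1}\,\mathrm{diag}\bigl(j^{q^r},\dots,j^{q^{2r-1}}\bigr),
\]
where $j=j(\gamma,\ww)$. This already produces the factor $\rho^{*}(\gamma)$ on the left.

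\textbf{Step 3: transform the $V$ factor.} By \cite[\S15]{BBP18}, $g_m$ is modular of weight $q^m-1$ and type $0$, so $g_m^{(s)}(\gamma\cdot\ww)=j^{q^s(q^m-1)}g_m^{(s)}(\ww)$. The entry of $(V^{\tr})^{(r-1)}$ in position $(k,i)$ is $g_{i+k-1}^{(r-k)}$ (or $0$). It therefore acquires the factor
\[
j^{q^{r-k}(q^{i+k-1}-1)}=j^{q^{r+i-1}}\,j^{-q^{r-k}},
\]
and this exponent separates into a row part and a column part. Hence
\[
(V(\gamma\cdot\ww)^{\tr})^{(r-1)}=\mathrm{diag}_k\bigl(j^{-q^{r-k}}\bigr)\,(V(\ww)^{\tr})^{(r-1)}\,\mathrm{diag}_i\bigl(j^{q^{r+i-1}}\bigr).
\]
Inverting this and multiplying on the left by the result of Step 2, the factors $\mathrm{diag}(j^{q^{r+i-1}})$ cancel. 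What remains is $\mathrm{diag}(j^{q^{r-1}},\dots,j)$ on the right, which is exactly \eqref{E:weakmod0}. I will then check that the minimal twist for which integral weights appear is the one giving root $q^r$ for $(\Psi^{\tr})^{(-1)}$, namely $((\Psi^{\tr})^{(-1)})^{(r)}=(\Psi^{\tr})^{(r-1)}$, with weights $q^r\cdot(1/q,\dots,1/q^r)$.

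\textbf{Step 4: the determinant.} We have $\det\Psi^{(r-1)}=\bigl(\det(\Upsilon^{(r)})\det(V^{(r-1)})\bigr)^{-1}$. For the first factor, twist \eqref{E:determinant} and use \eqref{E:AGF4}:
\[
\det(\Upsilon^{(r)})=\mathbb{S}_{r-1}\,\omega_C\,h_r^{-q^r}.
\]
Since $V$ is anti-triangular, $\det V^{(r-1)}=\pm\prod_{k=1}^r g_r^{q^{r-k}}$. Rewriting $g_r=(-1)^{r-1}h_r^{q-1}$ via \eqref{E:hfunction}, the product becomes $\pm h_r^{q^r-1}$. Combining, and using $\omega_C=\Omega^{-1}\mathbb{S}_0$ (which follows from $\Omega=(\omega_C^{(1)})^{-1}$ and \eqref{E:AGF4}), gives the stated product $(t-\theta^{q^{r-1}})^{-1}\cdots(t-\theta^q)^{-1}h_r\Omega$. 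The remaining work is bookkeeping of the signs and the choice of $(q-1)$-st root.

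\textbf{Step 5 (main obstacle): the growth condition.} We need that $\mathrm{diag}(1,u,\dots,u)(\Psi^{\tr})^{(r-1)}\to 0$ along $\mathfrak{I}_{\widetilde{\ww}}$. I would write $(\Upsilon^{(r)})^{-1}$ as its adjugate divided by the determinant. The determinant is $\mathbb{S}_{r-1}\omega_C h_r^{-q^r}$, and $h_r$ is a cusp form with known vanishing order in $u$. The adjugate entries are built from the $f_i^{(k)}$. I would then use the $u$-behaviour of the Anderson generating functions as $|w_1|\to\infty$: $f_1$ grows like $w_1$-terms controlled by $u^{-1}$, while $f_2,\dots,f_r$ tend to the generating functions of $\phi(\widetilde{\ww})$, as in the proof of \cite[Thm.~5.5]{CG22} and \cite{Pel14}. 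Together with the $u$-expansions of the $g_i$, this should control each row. Extracting precise orders of vanishing row by row is where I expect the real difficulty.
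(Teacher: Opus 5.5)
Your Steps 1--3 are correct and use the same inputs as the paper, namely \eqref{E:Upsilon transform} and the modularity of the $g_i$. Splitting $(\Psi^{\tr})^{(r-1)}=(\Upsilon^{(r)})^{-1}\bigl((V^{\tr})^{(r-1)}\bigr)^{-1}$ and observing that the weight $q^{r-k}(q^{i+k-1}-1)=q^{r+i-1}-q^{r-k}$ of the $(k,i)$ entry of $(V^{\tr})^{(r-1)}$ separates into a row part and a column part actually justifies \eqref{E:weakmod0} more transparently than the paper's entrywise claim about $\mathfrak{F}_{ij}$.

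Step 4 is the right computation, but it needs two corrections.
\begin{itemize}
\item From $\Omega=(\omega_C^{(1)})^{-1}$ and \eqref{E:AGF4} one gets $\omega_C=\Omega^{-1}\mathbb{S}_0^{-1}$, not $\Omega^{-1}\mathbb{S}_0$. Only with the former does $\mathbb{S}_{r-1}^{-1}\omega_C^{-1}$ collapse to $(t-\theta^{q^{r-1}})^{-1}\cdots(t-\theta^{q})^{-1}\Omega$.
\item The sign will not disappear under bookkeeping. For odd $q$ the reversal permutation gives $\det V^{(r-1)}=(-1)^{\lfloor r/2\rfloor}g_r^{1+q+\cdots+q^{r-1}}=(-1)^{\lfloor r/2\rfloor}h_r^{q^r-1}$. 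The sign in \eqref{E:hfunction} drops out because $(r-1)(1+q+\cdots+q^{r-1})$ is even. So \eqref{E:determinant} taken literally gives the stated formula times $(-1)^{\lfloor r/2\rfloor}$. The paper's one-line argument never accounts for $\det V$, so this looseness is in the source, not in your method.
\end{itemize}

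The genuine gap is Step 5. Being \emph{modular} in the sense of Definition \ref{D:matrix modular type}(ii) includes the cusp condition, and you give no argument for it. Your factorization is also ill-suited here, because the cusp behaviour of the two factors must cancel:
\begin{itemize}
\item $\det(\Upsilon^{(r)})^{-1}$ is a multiple of $h_r^{q^r}$;
\item $\bigl((V^{\tr})^{(r-1)}\bigr)^{-1}$ has entries with negative powers of the cusp form $g_r$;
\item the adjugate of $\Upsilon^{(r)}$ involves $f_j^{(k)}$ for $r\le k\le 2r-1$. The known asymptotics do not cover these, and the functional equation reduces them to lower twists only after dividing by $g_r$.
\end{itemize}

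The missing idea is to analyse the inverse $F:=(V^{\tr})^{(r-1)}\Upsilon^{(r)}$ of $(\Psi^{\tr})^{(r-1)}$ directly. Twisting Lemma \ref{L:AGF}(iii) by $r-i$ rewrites its $(i,j)$ entry as
\[
\mathfrak{F}_{ij}=(t-\theta^{q^{r-i}})f_j^{(r-i)}-\sum_{m=1}^{i-1}g_m^{(r-i)}f_j^{(r-i+m)},
\]
so no twist beyond $r-1$ occurs and no power of $g_r$ is inverted. The cusp behaviour of every ingredient is then known:
\begin{itemize}
\item by \eqref{E:limit0} (from \cite[Lem.~3.15]{GU25}), $uf_1^{(k)}$ has a finite limit for $1\le k\le r-1$;
\item the $g_m$ with $m<r$ stay bounded;
\item the $f_j$ with $j\ge2$ stay bounded, since they tend to Anderson generating functions of $\phi(\widetilde{\ww})$ via the $u$-expansions of the exponential coefficients \cite[Prop.~15.3]{BBP18};
\item the last row $\mathfrak{F}_{rj}=g_rf_j^{(r)}$ is controlled by \cite[Cor.~6.3]{BB17}.
\end{itemize}
Hence the first column of $F$ is $O(u^{-1})$ and the other columns are $O(1)$. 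Your Step 4 gives $\det(F)^{-1}=\det(\Psi^{(r-1)})=O(u)$, since $h_r$ is a cusp form. Now write $(\Psi^{\tr})^{(r-1)}=\det(F)^{-1}\mathrm{adj}(F)$. Row $1$ consists of minors of $F$ with the first column deleted, so it is $O(u)$. The remaining rows are $O(u^{-1})\cdot O(u)=O(1)$. This is exactly $\mathrm{diag}(1,u,\dots,u)(\Psi^{\tr})^{(r-1)}\to 0$ along $\mathfrak{I}_{\widetilde{\ww}}$. Without this reorganization, or some equivalent control of the cancellation between your two factors, Step 5 is a hope rather than a proof.
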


\begin{proof} First we note that the last assertion simply follows from \eqref{E:determinant} and the fact that $(\omega_C^{(r)})^{-1}=(t-\theta^{q^{r-1}})^{-1}\cdots (t-\theta^q)^{-1} \Omega(t)$. Now we prove the first assertion. Observe that 
\begin{multline*}
((\Psi(\ww)^{\tr})^{(r-1)})^{-1}=(V(\ww)^{\tr})^{(r-1)}\Upsilon(\ww)^{(r)}\\=
\begin{pmatrix}
g_1(\ww)^{(r-1)}&g_2(\ww)^{(r-1)} & \dots & \dots &g_r(\ww)^{(r-1)}\\
\vdots &\vdots&&  \iddots & \\
\vdots & \vdots& \iddots & & \\
\vdots &g_r(\ww)^{(1)} & & & \\
g_r(\ww) &  & & & 
\end{pmatrix}\begin{pmatrix} f_{1}(\ww,t)^{(r)} & \cdots & \cdots & f_{r}(\ww,t)^{(r)}\\
f_{1}(\ww,t)^{(r+1)} & \cdots & \cdots & f_{r}(\ww,t)^{(r+1)}\\
\vdots  & & & \vdots\\
\vdots  & & & \vdots\\
f_{1}(\ww,t)^{(2r-1)} & \cdots & \cdots &f_{r}(\ww,t)^{(2r-1)}
\end{pmatrix}.
\end{multline*}
For $1\leq i,j\leq r$, letting $g_i\equiv 0$ when $i\leq 0$ and $i>r$, we set
\[
\mathfrak{F}_{ij}(\ww,t):=g_i^{(r-i)}(\ww)f_j^{(r)}(\ww,t)+g_{i+1}^{(r-i)}(\ww)f_j^{(r+1)}(\ww,t) +\cdots+g_r^{(r-i)}(\ww)f_j^{(2r-i)}(\ww,t)
\]
which is the $(i,j)$-th entry of $ ((\Psi(\ww)^{\tr})^{(r-1)})^{-1}$.
By \eqref{E:Upsilon transform} and the modularity properties of $g_1,\dots,g_r$, for $1\leq i,j\leq r$, we see that 
\[
\mathfrak{F}_{ij}(\gamma\cdot \ww,t)=j(\gamma,\ww)^{-q^{r-i}}(a_{j1}(t)\mathfrak{F}_{j1}(\ww,t)+\cdots +a_{jr}(t)\mathfrak{F}_{jr}(\ww,t)), \ \ \ \ \ \ \ \gamma=(a_{ij})_{i,j}\in \GL_r(A).
\]
In other words, we obtain 
\[
((\Psi(\gamma \cdot \ww)^{\tr})^{(r-1)})^{-1}=\begin{pmatrix}
    j(\gamma,\ww)^{-q^{r-1}}& & & & \\
     & \ddots & & & \\
      & & & j(\gamma,\ww)^{-q}& \\
      & & & & j(\gamma,\ww)^{-1}
      \end{pmatrix}(\Psi(\ww)^{\tr})^{(r-1)})^{-1}\overline{\gamma}^{\tr}
\]
implying that 
\[
(\Psi(\gamma \cdot \ww)^{\tr})^{(r-1)}=(\overline{\gamma}^{\tr})^{-1}(\Psi(\ww)^{\tr})^{(r-1)}\begin{pmatrix}
    j(\gamma,\ww)^{q^{r-1}}& & & & \\
     & \ddots & & & \\
      & & & j(\gamma,\ww)^{q}& \\
      & & & & j(\gamma,\ww)
      \end{pmatrix}.
\]
Thus, we see that the function $\ww\to ((\Psi(\ww)^{\tr})^{(-1)}$ is a modular-like matrix-valued function of weights $(1/q,1/q^2,\dots,1/q^r)$, type $0$ and root $q^r$ with respect to $\rho^{*}$. 

In what follows, we show that the rigid analytic matrix valued function $\ww\to ((\Psi(\ww)^{\tr})^{(r-1)}$ is indeed modular. Using Lemma \ref{L:AGF}(iii), for $1\leq i \leq r$, one can write
\[
\mathfrak{F}_{ij}(\ww,t)=(t-\theta^{q^{r-i}})f_j(\ww,t)^{(r-i)}-g_1(\ww)^{(r-i)}f_j(\ww,t)^{(r-i+1)}-\cdots-g_{i-1}(\ww)^{(r-i)}f_j(\ww,t)^{(r-1)}.
\]
Let $\widetilde{\ww}\in \mathbb{H}^{r-1}$ and let $\Lambda_{\widetilde{\ww}}$ be the $A$-lattice generated by the entries of $\widetilde{\ww}$ over $A$. Recall the Drinfeld module $\phi(\widetilde{\ww})$ of rank $r$ corresponding to the $A$-lattice $\Lambda_{\widetilde{\ww}}$ so that it is given by 
\[
(\phi(\widetilde{\ww}))_{\theta} = \theta + \widetilde{g}_1(\ww)\tau + \cdots+\widetilde{g}_{r-1}(\ww)\tau^{r-1}.
\] By \cite[Lem. 3.15]{GU25}, we know that 
 \begin{equation}\label{E:limit0}
 \lim_{\substack{\ww=(w_1,\widetilde{\ww})\in \mathfrak{I}_{\widetilde{\ww}}\\ |\ww|_{\infty}\to\infty}} u(\ww)f_1(\ww,t)^{(i)}= \begin{cases}
 0 &\text{ if } 1\leq i \leq r-2\\
\tilde{\pi}^{-1}\widetilde{g}_{r-1}(\widetilde{\ww})^{-1} &\text{ if } i=r-1.
\end{cases}
\end{equation}
Thus, for $1\leq i \leq r-1$, we obtain
\begin{equation}\label{E:cof00}
   \lim_{\substack{\ww=(w_1,\widetilde{\ww})\in \mathfrak{I}_{\widetilde{\ww}}\\ |\ww|_{\infty}\to\infty}}u(\ww)\mathfrak{F}_{i1}(\ww,t)=(t-\theta^{q^{r-i}})\tilde{\pi}^{-1}\widetilde{g}_{r-1}(\widetilde{\ww})^{-1}
\end{equation}
and 
\begin{equation}\label{E:cof000}
   \lim_{\substack{\ww=(w_1,\widetilde{\ww})\in \mathfrak{I}_{\widetilde{\ww}}\\ |\ww|_{\infty}\to\infty}}u(\ww)\mathfrak{F}_{r1}(\ww,t)=\lim_{\substack{\ww=(w_1,\widetilde{\ww})\in \mathfrak{I}_{\widetilde{\ww}}\\ |\ww|_{\infty}\to\infty}}u(\ww)g_r(\ww)f_1(\ww,t)^{(r)}=\tilde{\pi}^{-1}\widetilde{g}_{r-1}(\widetilde{\ww})^{q-1}
\end{equation}
where the last line of the above equality follows from \cite[Cor. 6.3]{BB17} and \eqref{E:hfunction}.

Note that for $1\leq i \leq r-1$,  by \cite[Prop. 15.12(c)]{BBP18}, we have 
\[
\lim_{\substack{\ww\in \mathfrak{I}_{\widetilde{\ww}}\\ |\ww|_{\infty}\to\infty}}g_{i}(\ww)=\widetilde{g}_{i}(\widetilde{\ww}).
\]
 For $2\leq j \leq r$, by using Lemma \ref{L:AGF}(i), we write
    \[
    f_{j}(\ww,t)=\sum_{i=0}^{\infty}\frac{\alpha_i(\ww)w_j^{q^i}}{\theta^{q^i}-t}
    \]
    where $\alpha_i(\ww)$ is the $i$-th coefficient of the exponential series $\Exp_{\phi(\ww)}$. By \cite[Prop. 15.3(b,c)]{BBP18}, we know that each $\alpha_i$ is a modular form of weight $q^i-1$ and type $0$. Moreover, they admit a $u$-expansion with the constant term $\widetilde{\alpha}_i(\widetilde{\ww})$ which is the $i$-th coefficient of the exponential series $\Exp_{\phi(\widetilde{\ww})}$. This implies that, for any fixed choice of $\widetilde{\ww}\in \mathbb{H}^{r-1}$, we have
    \[
    \lim_{\substack{\ww=(w_1,\widetilde{\ww})\in \mathfrak{I}_{\widetilde{\ww}}\\ |\ww|_{\infty}\to\infty}}f_j(\ww,t)= \sum_{i=0}^{\infty}\frac{\widetilde{\alpha}_i(\widetilde{\ww})w_j^{q^i}}{\theta^{q^i}-t}<\infty.
    \]
        Thus, for $2\leq j \leq r$ and $1\leq i \leq r-1$ we obtain 
        \begin{equation}\label{E:cof01}
        \lim_{\substack{\ww=(w_1,\widetilde{\ww})\in \mathfrak{I}_{\widetilde{\ww}}\\ |\ww|_{\infty}\to\infty}}\mathfrak{F}_{ij}(\ww,t)<\infty
        \end{equation}
        and 
        \begin{equation}\label{E:cof11}
        \mathfrak{F}_{rj}(\ww,t)=g_{r}(\ww)f_j(\ww,t)^{(r)}=\widetilde{g}_{r-1}(\widetilde{\ww})^qu(\ww)^{q-1}+O(u(\ww)^q)
        \end{equation}
        where, again, the last line of the above equality follows from \cite[Cor. 6.3]{BB17} and \eqref{E:hfunction}.

Lastly, by \cite[Cor. 6.3]{BB17}, we observe that 
\begin{multline}\label{E:limit3}
\lim_{\substack{\ww=(w_1,\widetilde{\ww})\in \mathfrak{I}_{\ww}\\ |\ww|_{\infty}\to\infty}}\det(V(\ww)^{(r-1)}\Upsilon^{(r)}(\ww))^{-1}= \lim_{\substack{\ww=(w_1,\widetilde{\ww})\in \mathfrak{I}_{\ww}\\ |\ww|_{\infty}\to\infty}}\det(\Psi(\ww)^{(r-1)}) \\
=\lim_{\substack{\ww=(w_1,\widetilde{\ww})\in \mathfrak{I}_{\ww}\\ |\ww|_{\infty}\to\infty}}(t-\theta^{q^{r-1}})^{-1}\cdots (t-\theta^q)^{-1} h_r(\ww)\Omega(t)\\
=(-1)^rh_{r-1}(\widetilde{\ww})^q (t-\theta^{q^{r-1}})^{-1}\cdots (t-\theta^q)^{-1}\Omega(t) u(\ww)+O(u(\ww)^2).
\end{multline}
Now we write 
\[
(\Psi(\ww)^{\tr})^{(r-1)}=(V(\ww)^{(r-1)}\Upsilon(\ww)^{(r)})^{-1}=\frac{1}{\det(V(\ww)^{(r-1)}\Upsilon^{(r)}(\ww))}\Cof(\ww)^{\tr}=(\mathfrak{C}_{ij})_{i,j}
\]
where $\Cof(\ww)\in \GL_r(\mathbb{T})$ is the cofactor matrix of $V(\ww)^{(r-1)}\Upsilon(\ww)^{(r)}$. Using \eqref{E:cof00}--\eqref{E:limit3}, we see that
         \begin{equation}\label{E:id11}
        \mathfrak{C}_{1r}(\ww)=\mathfrak{c}_{1r}(\widetilde{\ww})u(\ww)+O(u(\ww)^{2})
        \end{equation}
      and   for $1\leq i\leq r-1$, we have 
        \begin{equation}\label{E:id12}
        \mathfrak{C}_{1i}(\ww)=\mathfrak{c}_{1i}(\widetilde{\ww})u(\ww)^q+O(u(\ww)^{q+1})
        \end{equation}
        for some rigid analytic functions $\mathfrak{c}_{1i}:\mathbb{H}^{r-1}\to \mathbb{T}$ and $\mathfrak{c}_{1r}:\mathbb{H}^{r-1}\to \mathbb{T}$. 
Similarly, for $1\leq j \leq r$ and $2\leq i \leq r$, we obtain 
  \begin{equation}\label{E:id123}
        \mathfrak{C}_{ij}(\ww)=\mathfrak{c}_{ij}(\widetilde{\ww})+O(u(\ww))
        \end{equation}
        for some rigid analytic function $\mathfrak{c}_{ij}:\mathbb{H}^{r-1}\to \mathbb{T}$. Since,
        \[
        \lim_{\substack{\ww=(w_1,\widetilde{\ww})\in \mathfrak{I}_{\ww}\\ |\ww|_{\infty}\to\infty}}u(\ww)= \lim_{\substack{\ww=(w_1,\widetilde{\ww})\in \mathfrak{I}_{\ww}\\ |\ww|_{\infty}\to\infty}}\frac{1}{\Exp_{\phi(\widetilde{\ww})}(\tilde{\pi}w_1)}=0,
        \]
we obtain 
\[
\begin{pmatrix}
    1& & & & \\
     & u(\ww) & & & \\
      & & & \ddots& \\
      & & & & u(\ww)
      \end{pmatrix}(\Psi(\ww)^{\tr})^{(r-1)}\to \Mat_r(\mathbf{0})
      \]
     as $|\ww|_{\infty}\to \infty$ where $\ww=(w_1,\widetilde{\ww}) \in \mathfrak{I}_{\widetilde{\ww}}$ for any fixed choice of $\widetilde{\ww}\in \mathbb{H}^{r-1}$. Hence, the function  $\ww\to ((\Psi(\ww)^{\tr})^{(r-1)}$ is modular, finishing the proof of the proposition.

\end{proof}

We now restrict our attention to the specific application for our paper. Let $\ww\in \mathbb{H}^r$. Let $\Lambda_{\ww}$ and $\phi(\ww)$ be as above. By Drinfeld \cite{Dri74}, we know that the moduli space of isomorphism classes of Drinfeld modules of rank $r$ defined over $\mathbb{C}_{\infty}$ is the quotient $\GL_r(A)\setminus \HH^r$. Thus, up to passing to an isomorphic Drinfeld module if necessary, we can assume that $\Lambda_{\ww}$ corresponds to $\phi(\ww)$ such that $g_1(\ww),\dots,g_r(\ww)$ are in $A$. 

Our next corollary is a consequence of Theorem \ref{T:modformslvalues}.

\begin{corollary}\label{C:modlike} Let $\ww=(w_1,\dots,w_r)^{\tr}\in \mathbb{H}^{r}$ be such that, up to passing to an isomorphism, its corresponding Drinfeld module $\phi(\ww)$ is defined over $A$.  Let $\mathfrak{f}_{\phi(\ww)}\in K_{\infty}^{\times}$, $\mathfrak{c}_{\phi(\ww)}\in K^{\times}$ and $N(\phi(\ww))\in \mathbb{Z}_{\geq 0}$ be as in Theorem \ref{T:modformslvalues}.
 Set $\bomega:= \theta^{\mathfrak{m}_{\phi(\ww)}}\Exp_{\phi(\ww)}(\mathfrak{f}_{\phi(\ww)}/\theta^{N(\phi(\ww))})$. Then we have
\[
L(\phi(\ww)^{\vee},0)=\mathfrak{c}_{\phi(\ww)}\mathcal{M}_{\bomega}\left(
\left[\Log_{\phi(\ww)}\left(\frac{f_{1}(\ww,t)}{\theta^{\mathfrak{m}_{\phi(\ww)}}}\right),\dots,\Log_{\phi(\ww)}\left(\frac{f_{r}(\ww,t)}{\theta^{\mathfrak{m}_{\phi(\ww)}}}\right)\right]
(\Psi(\ww)^{\tr})^{(-1)}\right).
\] 
Moreover, assume that $r=2$ and  $\deg_{\theta}g_i(\ww)<q^{i}$ for $i=1,2$. Then $\bomega=1$ and 
\[
L(\phi(\ww)^{\vee},0)=\mathfrak{c}_{\phi(\ww)}\mathcal{M}_{\bomega}\left(\frac{1}{\theta-t}
\left[w_1,w_2\right]
(\Psi(\ww)^{\tr})^{(-1)}\right).
\]
\end{corollary}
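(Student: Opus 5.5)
This corollary is Theorem \ref{T:modformslvalues} specialized to the Drinfeld module $\phi(\ww)$, so the plan is to check that every object in that theorem matches the corresponding object attached to $\ww$. First, by hypothesis (after replacing $\phi(\ww)$ by an isomorphic Drinfeld module, which only rescales the lattice and which we fix once and for all as in the discussion preceding the corollary), $\phi(\ww)_\theta=\theta+g_1(\ww)\tau+\cdots+g_r(\ww)\tau^r$ has coefficients $k_i=g_i(\ww)\in A$. Its period lattice is $\Lambda_{\ww}=Aw_1+\cdots+Aw_r$, so we may take $\lambda_i=w_i$ as the fixed $A$-basis. With this choice the Anderson generating functions $f_{\lambda_i}(t)$ are exactly $f_i(\ww,t)$. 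The matrices also match: $\Upsilon_{\phi(\ww)}$ of \eqref{E:Upsilon} is $\Upsilon(\ww)$, and $V$ of \eqref{E:V def} with $k_i=g_i(\ww)$ is $V(\ww)$. Hence $\Psi_{\phi(\ww)}=V^{-1}((\Upsilon^{(1)})^{\tr})^{-1}$ agrees with $\Psi(\ww)$ of \eqref{E:ratmod}.

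One point needs care here. \eqref{E:ratmod} writes $((\Upsilon^{(1)})^{\tr}V)^{-1}=V^{-1}((\Upsilon^{(1)})^{\tr})^{-1}$, and this is the same expression as the one in \S\ref{S:Drinfeld}. So nothing needs checking beyond noting that the two definitions coincide literally. It also agrees with the general definition of $\Psi$ in \S\ref{S:uniAndmod}, which is the setting Theorem \ref{T:Intro log formula} uses through Theorem \ref{T:modformslvalues}.

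Once these identifications are made, part (i) of Theorem \ref{T:modformslvalues} gives the first displayed formula verbatim. The constants $\mathfrak{f}_{\phi(\ww)}$, $N(\phi(\ww))$ and $\mathfrak{c}_{\phi(\ww)}$ are the Taelman data from that proof, $\mathfrak{m}_{\phi}=\mathfrak{m}_{\phi(\ww)}$, and $\bomega$ is the element called $\zz$ there. For the rank-two statement, the hypothesis $\deg_\theta g_i(\ww)<q^i$ is precisely the hypothesis $\deg_\theta k_i<q^i$ of part (ii). Part (ii) therefore gives $\zz=\bomega=1$ and the formula with $[\lambda_1,\lambda_2]=[w_1,w_2]$.

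The main obstacle is the choice of lattice basis. The proof of Theorem \ref{T:modformslvalues}(ii) uses the specific basis $\lambda_i=\theta\Log_\phi(\xi_i)$ from \cite{KP23} to conclude $\mathfrak{m}_\phi=0$, whereas the corollary uses the basis $w_1,w_2$. I would handle this by observing that $\mathfrak{m}_\phi$, as defined in Lemma \ref{L:integerm}, depends only on the set $\{\Exp_\phi(\theta^{-i}\lambda):\lambda\in\Lambda_\phi,\ i\geq1\}$, up to the finitely many basis elements used. I would then check that $\Exp_\phi(\theta^{-i}w_\nu)$ stays in the domain of convergence of $\Log_\phi$; this follows because $w_\nu$ is an $A$-linear combination of the $\lambda_j$ and $\Exp_\phi$ is $\mathbb{F}_q$-linear with each $\Exp_\phi(\theta^{-i}\lambda_j)$ in the disc where $\Exp_\phi$ is isometric. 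If this fails, the fallback is to choose $\ww$ within its $\GL_2(A)$-orbit so that $(w_1,w_2)$ is the \cite{KP23} basis. This is allowed because the moduli description permits replacing $\ww$ by $\gamma\cdot\ww$, and the modularity of $(\Psi^{\tr})^{(-1)}$ in Proposition \ref{P:modlike} tracks how the right-hand side changes.
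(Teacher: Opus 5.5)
Your identification of the data ($k_i=g_i(\ww)$, $\lambda_i=w_i$, $f_{\lambda_i}=f_i(\ww,t)$, $V=V(\ww)$, $\Psi_{\phi(\ww)}=\Psi(\ww)$) is exactly how the paper gets the corollary. The paper states it as an immediate consequence of Theorem~\ref{T:modformslvalues}. The first formula is fine for any $A$-basis, and you are right that $\mathfrak{m}_{\phi}$ does not depend on the basis. To make that airtight, write $a\theta^{-i}\lambda_j$ with $a\in A$ as a period plus an $\mathbb{F}_q$-combination of the $\theta^{-k}\lambda_j$ with $1\leq k\leq i$, and use that the convergence disc of $\Log_\phi$ is an $\mathbb{F}_q$-subspace. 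The problem is that $\mathfrak{m}_\phi=0$ is not what the rank-two formula needs. So your treatment of the obstacle you correctly spotted leaves a real gap.

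The factor $\frac{1}{\theta-t}[\lambda_1,\lambda_2]$ comes from the \emph{second} hypothesis in the last part of Theorem~\ref{T:Intro log formula}. There $\theta^{-i}\lambda_\nu$ itself, not $\Exp_\phi(\theta^{-i}\lambda_\nu)$, must lie in the convergence disc of $\Log_\phi$. That gives $\Log_\phi(\Exp_\phi(\theta^{-i}\lambda_\nu))=\theta^{-i}\lambda_\nu$, hence $\Log_\phi(f_{\lambda_\nu}(t))=\lambda_\nu/(\theta-t)$. This condition depends on the basis, and you never verify it for $w_1,w_2$.

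Here is a concrete failure. Since $f_{a\lambda}(t)=a(t)f_\lambda(t)$ and $\Log_\phi$ acts coefficientwise in $t$, suppose $(w_1,w_2)$ is a good basis and set $w_1'=w_1+\theta^Nw_2$ with $N\geq 1$. Then $\mathfrak{m}_\phi$ is still $0$, but $\Log_\phi(f_{w_1'})=\frac{w_1+t^Nw_2}{\theta-t}\neq\frac{w_1'}{\theta-t}$. Because the corresponding $(\Psi^{\tr})^{(-1)}$ is invertible, $\frac{1}{\theta-t}[w_1',w_2](\Psi^{\tr})^{(-1)}$ is a different element from $\varphi_\phi(\Log_\phi)$. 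By injectivity of $\varphi_\phi$, its preimage (if any) is not $\Log_\phi$, and nothing in your argument computes its Mellin transform.

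Your fallback does not close this. The corollary concerns the given $\ww$, and Proposition~\ref{P:modlike} cannot transport the right-hand side along the $\GL_2(A)$-orbit. Under $\ww\mapsto\gamma\cdot\ww$, the row $[w_1,w_2]$ changes by $j(\gamma,\ww)^{-1}\gamma^{\tr}$, while $\rho^{*}(\gamma)$ contributes $(\overline{\gamma}^{\tr})^{-1}$. This leaves a factor $\gamma^{\tr}(\overline{\gamma}^{\tr})^{-1}$, which is the identity only when $\gamma$ has entries in $\mathbb{F}_q$.

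The proof of Theorem~\ref{T:modformslvalues}(ii) only handles the basis $\lambda_\nu=\theta\Log_\phi(\xi_\nu)$ from \cite{KP23}. The paper's one-line deduction silently applies it to $w_1,w_2$ and inherits the same issue. To make the second formula correct, you need one additional input, either as a hypothesis on $\ww$ or with a proof. Namely, $\theta^{-1}w_1$ and $\theta^{-1}w_2$ must lie in the disc on which $\Log_{\phi(\ww)}$ inverts $\Exp_{\phi(\ww)}$. For instance, it suffices that $(w_1,w_2)$ is the \cite{KP23} basis of the lattice of the $A$-model of $\phi(\ww)$.
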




\end{document}